\documentclass{article}
\usepackage{amsmath}
\usepackage{amsthm}
\usepackage{amssymb}
\usepackage{amsfonts}
\usepackage{subfig}
\usepackage{graphicx}
\usepackage{amssymb}
\usepackage{authblk}
\usepackage{comment}
\usepackage[mathscr]{euscript}
\usepackage{float}
\usepackage{xcolor}

\newcommand{\RR}{\mathbb{R}}

\newcommand{\ZZ}{\mathbb{Z}}

\renewcommand{\a}{\alpha}

\newcommand{\e}{\varepsilon}

\newcommand{\G}{\Gamma}

\newtheorem{Theorem}{Theorem}[section]

\newtheorem{Corollary}[Theorem]{Corollary}

\newtheorem{Definition}[Theorem]{Definition}

\newtheorem{Proposition}[Theorem]{Proposition}

\newtheorem{Remark}[Theorem]{Remark}

\title{A Generalized Transform Framework for a Nonlinear Model of Cancer Dynamics}
\author{
Gabriela Lopez, Hector Carmenate and Jyrko Correa-Morris\\
Department of Mathematics, Miami Dade College, Miami, FL, USA.\\ 
Gabriela Lopez, gabriela.lopez027@mymdc.net\\ 
Hector Carmenate, hcarmena@mdc.edu\\ 
Jyrko Correa-Morris, jcorrea7@mdc.edu
}

\begin{document}

\maketitle

\abstract{This paper develops a generalized transform framework for a logistic--Allee tumor-growth model. The method combines a generalized Laplace transform, Adomian decomposition, Chebyshev--Pad\'e rational reconstruction, and a \(\mu\)-scaled generalized transform to obtain admissible semi-analytical approximations. Comparisons with experimental tumor-growth data show that the resulting compact representations are stable, admissible, and comparable in error to standard numerical reference solutions.}

\noindent\textbf{Keywords:} generalized fractional derivative; generalized Laplace transform; \(\mu\)-scaled generalized transform; logistic--Allee model; tumor growth; Adomian decomposition; Chebyshev--Pad\'e approximants.

\noindent\textbf{MSC:} 34A08; 44A10; 65L05; 92C50


\section{Introduction}

Generalized transform methods provide a useful framework for constructing semi-analytical approximations to nonlinear differential models. In this work, we combine a generalized Laplace transform, Adomian decomposition, Chebyshev--Pad\'e approximation (GLAP), and a \(\mu\)-scaled generalized transform to study a logistic--Allee tumor-growth model.

Mathematical models of tumor growth connect experimental observations with mechanisms such as cell proliferation, competition, and microenvironmental interaction. Classical growth laws, including exponential, logistic, and Gompertz models, have been widely used to describe tumor dynamics \cite{benzekry2014,laleh2022}. However, more flexible nonlinear models are often needed when growth depends on population density or variable time-scale effects.

One biologically motivated extension is the Allee effect, which describes a positive relationship between population density and per-capita fitness below a critical threshold. In tumor-growth modeling, related mechanisms may arise from autocrine signaling \cite{gerlee2022}, immune evasion, and microenvironmental conditioning. The logistic--Allee model studied by Paix\~{a}o et al.~\cite{paixao2021} captures density-dependent growth limitation using parameters inferred from in vivo data, supporting its relevance in experimental tumor dynamics \cite{neufeld2017,sardar2022}.

Fractional calculus has been successfully applied in several contexts, including tumor-growth modeling and related biological dynamics \cite{valentim2020,martinez2024, monteiro2024}. Khalil et al.~\cite{khalil2014} introduced the conformable fractional derivative, and related local formulations have since been applied to several models \cite{abdeljawad2015,atangana2015,kajouni2021}. In this work, we use the generalized definition introduced in \cite{fleitas2021}. In the range \(0<\alpha\leq 1\) considered here, the operator is interpreted as a generalized local first-order derivative; higher-order cases require a separate treatment.

The main contribution of this work is the development of a generalized transform framework for nonlinear tumor-growth dynamics. More precisely, we construct a generalized Laplace--Adomian--Chebyshev--Padé scheme for the logistic--Allee equation, derive a compact first-order semi-analytical approximation, and introduce a $\mu$-scaled transform that enlarges the admissible transform geometry. The proposed approach is tested against experimental tumor-growth data and standard numerical reference solutions. In addition, the paper establishes admissibility conditions for the transformed approximation, proves the existence of an admissible minimizer, characterizes the admissible region of the parametric kernel family, derives an interior optimality condition for the transform-parameter fitting problem, and proves fixed-point consistency with a first-order truncation estimate.

The paper is organized as follows. 
Section~\ref{sec:theory} presents the preliminary definitions and the main properties of the generalized Laplace transform. 
Section~\ref{sec:method} develops the GLAP formulation for the logistic--Allee tumor-growth model and derives the first-order semi-analytical approximation. 
Section~\ref{sec:optimization_cp} describes the optimization strategy and the Chebyshev--Pad\'e reconstruction for the generalized transform. 
Section~\ref{sec:mu_transform} introduces the \(\mu\)-scaled generalized transform and its operational properties. 
Section~\ref{sec:mu_optimization_cp} describes the optimization strategy and the Chebyshev--Pad\'e reconstruction for the \(\mu\)-scaled generalized transform. 
Sections \ref{sec:inverse_mu} and \ref{sec:residue_formulas} provide the inverse and residue inversion formulas for the generalized and \(\mu\)-scaled transforms. 
Section~\ref{sec:error_admissibility} establishes admissibility, existence of an admissible minimizer, admissible regions for the parametric kernel, interior optimality, fixed-point consistency, and a first-order truncation estimate.
Finally, Section \ref{sec:conclusions} presents the conclusions, and the Appendix contains the technical proofs and operational derivations.

\section{Theoretical Basis}
\label{sec:theory}
The following definition and properties of the generalized fractional derivative can be found in \cite{fleitas2021}.

Given \(r\in\RR\), let \(\lceil r\rceil\) denote the ceiling of \(r\), that is, the smallest integer greater than or equal to \(r\).

\begin{Definition} 
\label{d:g0}
Given an interval $I \subseteq \RR$, $f: I \rightarrow {\mathbb{R}}$, $\alpha \in \RR^+$ and a positive
continuous function $T(t,\alpha )$ on $I$,
the \emph{derivative} $G_{T}^{\alpha }f$ of $f$ of order $\alpha$ at the point $t \in I$ is defined by
\begin{equation*}
G_{T}^{\alpha }f(t)=\lim_{h\to 0} \frac{1}{h^{\lceil \a \rceil}}\sum_{k=0}^{\lceil \a \rceil}(-1)^{k}\binom{\lceil \a \rceil}{k}f\big(t-khT(t,\alpha )\big) .
\end{equation*}
\end{Definition}

If $a=\min\{t \in I\}$ (respectively, $b=\max\{t \in I\}$), then $G_{T}^{\alpha }f(a)$ (respectively, $G_{T}^{\alpha }f(b)$) is defined with
$h\to 0^-$ (respectively, $h\to 0^+$) instead of $h\to 0$ in the limit.

\begin{Theorem} 
\label{t:comp}
Let $I \subseteq \RR$ be an interval, $f: I \rightarrow {\mathbb{R}}$ and $\alpha \in \RR^+$.

$(1)$ If the derivative \(D^{\lceil\a\rceil}f\) exists at the point \(t\in I\), then $f$ is $G_{T}^\a$-differentiable at $t$ and
$G_{T}^{\alpha }f(t)= T(t,\alpha )^{\lceil \a \rceil}D^{\lceil \a \rceil}f(t)$.

$(2)$ If $\a \in (0,1]$, then $f$ is $G_{T}^\a$-differentiable at $t \in I$ if and only if $f$ is differentiable at
$t$;
in this case, we have $G_{T}^{\alpha }f(t)= T(t,\alpha ) f'(t)$.
\end{Theorem}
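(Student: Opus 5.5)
Write $n=\lceil\a\rceil$ and $c=T(t,\alpha)>0$; throughout, $t$ is fixed, so $c$ is just a positive constant. The plan is to reduce $G_T^\a f(t)$ to the classical $n$-th backward difference quotient with step $ch$:
\begin{equation*}
\frac{1}{h^{n}}\sum_{k=0}^{n}(-1)^k\binom{n}{k}f(t-khc)=c^{n}\,\frac{\Delta^{n}_{ch}f(t)}{(ch)^{n}},\qquad \Delta^n_{s}f(t):=\sum_{k=0}^{n}(-1)^k\binom{n}{k}f(t-ks).
\end{equation*}
Since $c>0$, the substitution $s=ch$ sends $h\to0$ to $s\to0$ and preserves one-sided limits ($h\to0^\pm$ iff $s\to0^\pm$). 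So it suffices to show that $\Delta^n_s f(t)/s^n\to D^nf(t)$ whenever $D^nf(t)$ exists.

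For part (1), I would use a Taylor expansion with Peano remainder at $t$. If $D^nf(t)$ exists, then $f$ is $(n-1)$ times differentiable near $t$, and
\begin{equation*}
f(t-ks)=\sum_{j=0}^{n}\frac{D^jf(t)}{j!}(-ks)^j+o(s^n).
\end{equation*}
Substituting this into $\Delta^n_s$ and using the standard identities
\begin{equation*}
\sum_{k=0}^n(-1)^k\binom nk k^j=0\quad (0\le j<n),\qquad \sum_{k=0}^n(-1)^k\binom nk k^n=(-1)^n n!,
\end{equation*}
which come from applying $(x\,d/dx)^j$ to $(1-x)^n$ at $x=1$, or from counting surjections, I get $\Delta^n_sf(t)=(-1)^n(-1)^n D^nf(t)s^n+o(s^n)=D^nf(t)s^n+o(s^n)$. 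Dividing by $s^n$ and letting $s\to0$ gives $G_T^\a f(t)=c^nD^nf(t)$. At an endpoint of $I$, the Peano expansion holds one-sidedly, and the one-sided limit prescribed in the definition keeps every point $t-ks$ inside $I$, so the same computation applies.

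The main obstacle is justifying the Peano form when only $D^nf(t)$ exists, with no continuity of $D^nf$ near $t$. I would handle it in the standard way: apply L'Hôpital's rule $n-1$ times to $\bigl(f(t+u)-P_n(u)\bigr)/u^n$, where $P_n$ is the Taylor polynomial, and finish with the definition of $D^nf(t)$ as a derivative of $D^{n-1}f$ at $t$. An alternative is to cite Taylor's theorem with Peano remainder directly.

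For part (2), $\a\in(0,1]$ gives $n=1$, and the quotient becomes $\bigl(f(t)-f(t-ch)\bigr)/h=c\cdot\bigl(f(t)-f(t-s)\bigr)/s$. Because $c\neq0$ and $s=ch$ is a bijective reparametrization near $0$, the left-hand limit exists exactly when the ordinary difference quotient of $f$ has a limit, that is, when $f'(t)$ exists. In that case the limit equals $c f'(t)=T(t,\alpha)f'(t)$, which gives both the equivalence and the formula. At endpoints the same argument works with one-sided limits.
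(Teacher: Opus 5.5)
Your proposal is correct and follows essentially the same route as the paper. The paper quotes this result from \cite{fleitas2021} without proof, but it proves the $\mu$-scaled analogue (Theorem~\ref{t:comp2}) by exactly your substitution $q=hT(\cdot,\alpha)$, which reduces part (1) to the classical $n$-th backward difference quotient and part (2), where $n=1$, to the ordinary derivative. The only difference is that the paper simply asserts $q^{-n}\sum_{k=0}^{n}(-1)^k\binom{n}{k}f(t-kq)\to D^nf(t)$, whereas you justify this limit through the Peano-remainder Taylor expansion and the binomial moment identities, and you also treat the endpoint cases explicitly.
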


\begin{Theorem} 
\label{t:prop}
Let $I \subseteq \RR$ be an interval, $f,g: I \rightarrow {\mathbb{R}}$ and $\alpha \in \RR^+$.
Assume that $f,g$ are $G_{T}^\a$-differentiable functions at $t \in I$.
The following statements hold:

$(1)$ $af+b\,g$ is $G_{T}^\a$-differentiable at $t$ for every $a,b \in \mathbb{R}$, and
$G_{T}^{\alpha }(af+b\,g)(t) = a\,G_{T}^{\alpha }f(t)+b\,G_{T}^{\alpha }g(t)$.

$(2)$ If $\alpha \in (0,1]$, then $fg$ is $G_{T}^\a$-differentiable at $t$ and
$G_{T}^{\alpha }(fg)(t)=f(t)G_{T}^{\alpha }g(t)+g(t)G_{T}^{\alpha }f(t)$.

$(3)$ If $\alpha \in (0,1]$ and $g(t) \neq 0$, then $f/g$ is $G_{T}^\a$-differentiable at $t$ and
$G_{T}^{\alpha }(\frac{f}{g})(t)=\frac{g(t)G_{T}^{\alpha }f(t)-f(t)G_{T}^{\alpha }g(t)}{g(t)^{2}}$.

$(4)$ $G_{T}^{\alpha }(\lambda )=0$, for every $\lambda \in \mathbb{R}.$

$(5)$ $G_{T}^{\alpha }(t^{p}) = \frac{\G(p+1)}{\G(p-\lceil\alpha \rceil+1)}t^{p-\lceil\alpha \rceil} T(t,\alpha
)^{{\lceil\alpha \rceil}}$, for every $p\in \mathbb{R} \setminus \ZZ^-\!.$

$(6)$ $G_{T}^{\alpha }(t^{-n}) = (-1)^{\lceil\alpha \rceil}\frac{\G(n+\lceil\alpha \rceil)}{\G(n)}\, t^{-n-\lceil\alpha
\rceil} T(t,\alpha )^{{\lceil\alpha \rceil}}$, for every $n\in \mathbb{Z}^+.$
\end{Theorem}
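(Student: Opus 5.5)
The plan is to work directly from Definition~\ref{d:g0}, using the substitution $u = hT(t,\alpha)$ and the positivity and finiteness of $T(t,\alpha)$ at the fixed point $t$. Write $n=\lceil\alpha\rceil$. The difference quotient becomes
\[
\frac{1}{h^{n}}\sum_{k=0}^{n}(-1)^k\binom{n}{k}f(t-khT)=T^{n}\,\frac{\Delta^n_{u}f(t)}{u^{n}},
\]
where $\Delta_u^n$ is the backward $n$-th difference with step $u$. Since $T(t,\alpha)>0$, we have $u\to0$ if and only if $h\to0$, and the one-sided endpoint conventions are preserved (the sign of $u$ equals the sign of $h$). Hence $G_T^\alpha f(t)$ exists if and only if $\lim_{u\to0}\Delta_u^n f(t)/u^n$ exists, and then $G_T^\alpha f(t)=T(t,\alpha)^n$ times that limit. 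This reduces every item to a statement about classical backward-difference limits.

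\textbf{Item (1).} Item (1) is $n$-fold linearity: each of the $n+1$ terms in the difference quotient is linear in $f$, so the limit of a linear combination is the linear combination of the limits.

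\textbf{Item (4).} For a constant $\lambda$, the difference quotient vanishes because $\sum_k(-1)^k\binom nk=0$. Note that $n\ge1$ since $\alpha>0$.

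\textbf{Items (2) and (3).} For $\alpha\in(0,1]$ we have $n=1$, and the quotient is $T\,(f(t)-f(t-u))/u$. Its limit exists if and only if $f'(t)$ exists, and equals $T f'(t)$; this is the one-dimensional case already recorded in Theorem~\ref{t:comp}(2). The product and quotient rules then follow by multiplying the classical Leibniz and quotient rules by $T(t,\alpha)$. Alternatively, one can argue directly by writing $f(t)g(t)-f(t-u)g(t-u)=f(t)(g(t)-g(t-u))+g(t-u)(f(t)-f(t-u))$ and using continuity of $g$ at $t$, which follows from differentiability.

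\textbf{Items (5) and (6).} For powers, where $t$ is interior to the domain of $t^p$ (so $t>0$ for non-integer $p$), $t^p$ is smooth near $t$. Theorem~\ref{t:comp}(1) then gives $G_T^\alpha t^p=T^nD^nt^p$. The classical formula
\[
D^n t^p=p(p-1)\cdots(p-n+1)\,t^{p-n}=\frac{\Gamma(p+1)}{\Gamma(p-n+1)}\,t^{p-n}
\]
holds for $p\notin\mathbb Z^-$. When $p-n+1$ is a nonpositive integer (that is, $p\in\{0,\dots,n-1\}$), we interpret $1/\Gamma$ at the pole as $0$, which agrees with the vanishing derivative. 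Item (6) follows similarly from $D^n t^{-m}=(-1)^n m(m+1)\cdots(m+n-1)\,t^{-m-n}$ together with the identity $m(m+1)\cdots(m+n-1)=\Gamma(m+n)/\Gamma(m)$.

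\textbf{Main obstacle.} The substance is light. The points needing care are:
\begin{itemize}
\item the pole convention for $\Gamma$ in (5);
\item noting that the statement implicitly requires $t$ in the domain where the power is defined;
\item checking that Theorem~\ref{t:comp}(1) only needs existence of $D^n f(t)$, which holds trivially for smooth powers.
\end{itemize}
If the intended reading of (2) and (3) for general $\alpha$ requires $n=1$, the restriction $\alpha\le1$ is exactly what makes the reduction to the single first difference valid.
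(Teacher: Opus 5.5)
Your proposal is correct. The paper gives no proof of this theorem (it is recalled from Fleitas et al.), but your substitution $u=hT(t,\alpha)$ is exactly the device the paper uses in Appendix~\ref{app:proof_tcomp2} for the $\mu$-scaled analogue, Theorem~\ref{t:comp2}; it rewrites the quotient as $T(t,\alpha)^{\lceil\alpha\rceil}$ times a classical backward-difference quotient, and the remaining items then follow from Theorem~\ref{t:comp} and elementary calculus as you describe, with the $1/\Gamma$ pole convention and the domain restriction on $t^p$ being implicit assumptions of the statement rather than gaps in your argument.
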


The following definition of the generalized Laplace transform and some of its properties can be found in \cite{bosch2021}.

We assume that the function $T$ is positive and continuous on $(0,\infty)$, and
satisfies for some $\e > 0$
$$
\int_0^\e \frac{d\omega}{T(\omega,\a)} < \infty,
\qquad
\text{and}
\qquad
\int_0^\infty \frac{d\omega}{T(\omega,\a)} = \infty,
$$
for each $0< \a \le 1$.

Let us define for each $0< \a \le 1$ and $c,t \in \RR$
$$
E_\a (c,t)= \exp \Big( c \int_0^t \frac{d\omega}{T(\omega,\a)} \, \Big).
$$

Note that $E_\a (c,t)$ is an eigenfunction for the operator $G_T^\a$,
since
$$
\begin{aligned}
	G_T^\a (E_\a (c,t))
	& = T(t,\a) \Big( \exp \Big( c \int_0^t \frac{d\omega}{T(\omega,\a)} \, \Big)\Big)'
	\\
	& = T(t,\a) \, \exp \Big( c \int_0^t \frac{d\omega}{T(\omega,\a)} \, \Big) \frac{c}{T(t,\a)}
	= c \, E_\a (c,t).
\end{aligned}
$$

Thus,
$$
\begin{aligned}
	\big(E_\a (c,t)\big)' & = c \, E_\a (c,t) \,\frac1{T(t,\a)} \, .
\end{aligned}
$$

Given $0< \a \le 1$ and a measurable function
$f: [0,\infty) \rightarrow \RR$, we define its {generalized  Laplace  transform} as
$$
\mathcal{L}_T^\a [f](s) = \int_0^\infty E_\a (-s,t) \, f(t) \, \frac{dt}{T(t,\a)}\,,
$$
if $\mathcal{L}_T^\a [\,|f|\,](s) < \infty$,
i.e., $E_\a (-s,t) \, f(t) /T(t,\a) \in L^1([0,\infty))$.
If we consider complex-valued functions instead of real-valued functions, then we can obtain similar results.

\begin{Proposition} \label{p:propL}
	Let $c,k \in \RR$, $0< \a \le 1$ and $f,g: [0,\infty) \rightarrow \RR$ be functions such that
	there exist $\mathcal{L}_T^\a [f](s)$ and $\mathcal{L}_T^\a [g](s)$ for some $s$.
	\begin{enumerate}
	\item[(1)]
	One has
	$$
	\mathcal{L}_T^\a [\,cf+k\,g\,](s) = c \,\mathcal{L}_T^\a [f](s) + k \,\mathcal{L}_T^\a [g](s).
	$$
	
	\item[(2)]
	 If there exists $\mathcal{L}_T^\a \big[E_\a(c,t)f(t)\big](s)$, then
	$$
	\mathcal{L}_T^\a \big[ E_\a(c,t)f(t) \big](s) = \mathcal{L}_T^\a [f](s-c) .
	$$
	\end{enumerate}
\end{Proposition}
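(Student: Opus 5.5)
Both parts follow directly from the definition of $\mathcal{L}_T^\a$ as a Lebesgue integral against the weight $E_\a(-s,t)/T(t,\a)$. The only real work is justifying that the relevant integrals converge absolutely.

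For (1), I would first check that $cf+kg$ has a generalized Laplace transform at $s$. Since $|cf+kg|\le |c||f|+|k||g|$ pointwise and $E_\a(-s,t)/T(t,\a)>0$, monotonicity of the integral gives
$$
\mathcal{L}_T^\a[\,|cf+kg|\,](s)\le |c|\,\mathcal{L}_T^\a[|f|](s)+|k|\,\mathcal{L}_T^\a[|g|](s)<\infty .
$$
So $E_\a(-s,t)(cf+kg)(t)/T(t,\a)\in L^1([0,\infty))$. Linearity of the Lebesgue integral on $L^1$ then splits the integral into $c\,\mathcal{L}_T^\a[f](s)+k\,\mathcal{L}_T^\a[g](s)$.

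For (2), the key step is the multiplicative identity for the kernel. Because $E_\a(c,t)=\exp\big(c\,\phi(t)\big)$ with $\phi(t)=\int_0^t d\omega/T(\omega,\a)$, we have
$$
E_\a(-s,t)\,E_\a(c,t)=\exp\big((-s+c)\,\phi(t)\big)=E_\a(-(s-c),t).
$$
The function $\phi(t)$ is finite for every $t\ge 0$: $T$ is positive and continuous on $(0,\infty)$, and $\int_0^\e d\omega/T<\infty$ by the standing assumption. Hence the exponentials are well defined and the identity holds pointwise. Substituting it into the integrand of $\mathcal{L}_T^\a[E_\a(c,\cdot)f](s)$ shows that this integrand coincides pointwise with the integrand of $\mathcal{L}_T^\a[f](s-c)$. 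The hypothesis that $\mathcal{L}_T^\a[E_\a(c,t)f(t)](s)$ exists means the former integrand has finite absolute integral. By the pointwise equality, so does the latter, which gives the existence of $\mathcal{L}_T^\a[f](s-c)$ and the equality of the two values at the same time.

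The only mildly delicate point is this integrability bookkeeping in (2). Existence of the transform at $s-c$ is not assumed, so it must be deduced from the hypothesis through the pointwise identity of the absolute integrands rather than taken for granted. I would also remark that for complex $s$ and $c$ the same argument goes through verbatim, using $|E_\a(-s,t)|=E_\a(-\operatorname{Re}s,t)$.
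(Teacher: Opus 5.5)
Your proof is correct. The paper gives no proof of its own for this proposition, since it is recalled from \cite{bosch2021}. Your direct argument from the definition is the natural one: for (1), the bound $|cf+kg|\le |c|\,|f|+|k|\,|g|$ against the positive weight $E_\alpha(-s,t)/T(t,\alpha)$ gives absolute integrability, and linearity of the integral finishes it; for (2), the kernel identity $E_\alpha(-s,t)E_\alpha(c,t)=E_\alpha(-(s-c),t)$ makes the two integrands coincide pointwise, so existence of $\mathcal{L}_T^\alpha[f](s-c)$ and the equality follow together, exactly as you point out.
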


\begin{Theorem} \label{t:cv}
	Let $f: [0,\infty) \rightarrow \RR$ be a function such that there exists $\mathcal{L}_T^\a [f](s)$ for some $s$ and $0< \a \le 1$.
	Then
	$$
	\mathcal{L}_T^\a [f(t)](s) = L [f(u(x))](s),
	$$
	where $L$ denotes the usual  Laplace  transform, and $u(x)$ is the inverse function of
	$$
	x (t) = \int_0^t \frac{d\omega}{T(\omega,\a)} \,.
	$$
\end{Theorem}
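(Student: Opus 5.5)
The plan is to prove Theorem~\ref{t:cv} by the substitution $x = x(t) = \int_0^t d\omega/T(\omega,\a)$ in the integral defining $\mathcal{L}_T^\a[f](s)$.

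\textbf{Step 1: $x$ is a valid change of variables.} Since $T(\cdot,\a)$ is positive and continuous on $(0,\infty)$, the function $x$ is continuous on $[0,\infty)$ and $C^1$ on $(0,\infty)$, with
$$
x'(t) = \frac{1}{T(t,\a)} > 0 .
$$
The local integrability hypothesis $\int_0^\e d\omega/T(\omega,\a)<\infty$ gives $x(0)=0$ and makes $x$ finite everywhere. The hypothesis $\int_0^\infty d\omega/T(\omega,\a)=\infty$ gives $x(t)\to\infty$ as $t\to\infty$. Hence $x$ is a strictly increasing homeomorphism of $[0,\infty)$ onto itself, and its inverse $u$ is well defined, increasing, and $C^1$ on $(0,\infty)$ with $u'(x)=T(u(x),\a)$.

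\textbf{Step 2: rewrite the kernel.} By definition $E_\a(-s,t)=\exp(-s\,x(t))$. Substituting $t=u(x)$ gives $E_\a(-s,u(x))=e^{-sx}$ and $dt/T(t,\a) = dx$. Formally, therefore,
$$
\int_0^\infty E_\a(-s,t)\,f(t)\,\frac{dt}{T(t,\a)} = \int_0^\infty e^{-sx} f(u(x))\,dx = L[f(u(x))](s).
$$

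\textbf{Step 3: justify the substitution.} This is the main obstacle, because $f$ is only measurable and $1/T$ may blow up at $0$. I would first apply the substitution to the nonnegative function $|f|$. The change-of-variables formula holds for nonnegative measurable integrands under an absolutely continuous, strictly increasing map. Here $x$ is absolutely continuous on each compact $[0,b]$, since it is the integral of the $L^1_{\mathrm{loc}}$ function $1/T$. Concretely, the pushforward of the measure $dt/T(t,\a)$ under $x$ is Lebesgue measure, because both assign mass $x(b)-x(a)$ to intervals $[a,b]$. Then monotone convergence on $[0,b]$ with $b\to\infty$ gives
$$
\mathcal{L}_T^\a[|f|](s) = L[|f\circ u|](s) .
$$
This shows that the integrability hypothesis transfers: $e^{-sx} f(u(x))\in L^1$. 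Applying the same identity to the positive and negative parts of $f$ (or to the real and imaginary parts in the complex case) yields the claimed equality of the two transforms.
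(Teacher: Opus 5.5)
Your proof is correct and uses the same change of variables, $x=x(t)$ with $dx=dt/T(t,\alpha)$ and $E_\alpha(-s,t)=e^{-sx}$, that underlies this result; the paper only recalls the result from \cite{bosch2021}, and it proves the $\mu$-scaled analogue, Theorem~\ref{t:mu_laplace_relation}, by exactly this formal substitution. Your Step~3, which identifies the pushforward of $dt/T$ under $x$ with Lebesgue measure and applies it first to $|f|$, supplies the measure-theoretic justification and the transfer of absolute integrability that the paper's formal substitution leaves implicit.
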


\begin{Corollary} \label{c:cv}
	Let $f: [0,\infty) \rightarrow \RR$ be a function such that there exists $L [f](s)$ for some $s$ and $0< \a \le 1$.
	The generalized Laplace transform at \(s\) exists for
	$$
	f\Big(\int_0^t \frac{d\omega}{T(\omega,\a)} \,\Big)
	$$
	and
	$$
	\mathcal{L}_T^\a \Big[f\Big(\int_0^t \frac{d\omega}{T(\omega,\a)} \,\Big)\Big](s) = L [f](s).
	$$
\end{Corollary}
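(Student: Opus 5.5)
The plan is to apply Theorem~\ref{t:cv} to the composed function $g(t)=f\big(x(t)\big)$, where $x(t)=\int_0^t d\omega/T(\omega,\a)$ and $u$ is its inverse. Since $g(u(x))=f(x)$, this turns the generalized transform of $g$ directly into $L[f](s)$.

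First I record the properties of $x$. By the standing assumptions, $T(\cdot,\a)$ is positive and continuous on $(0,\infty)$ and $\int_0^\e d\omega/T<\infty$. Hence $x$ is finite, continuous, and strictly increasing on $[0,\infty)$, with $x(0)=0$, and it is $C^1$ on $(0,\infty)$ with $x'(t)=1/T(t,\a)>0$. The divergence $\int_0^\infty d\omega/T=\infty$ gives $x(t)\to\infty$. So $x$ is a bijection of $[0,\infty)$ onto $[0,\infty)$ with continuous, increasing inverse $u$, and in particular $g(u(x))=f(x)$ for every $x\ge 0$.

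Next I establish existence. By definition, $\mathcal{L}_T^\a[|g|](s)=\int_0^\infty e^{-s\,x(t)}\,|f(x(t))|\,x'(t)\,dt$, since $E_\a(-s,t)=e^{-s x(t)}$. Substituting $x=x(t)$, which is legitimate for nonnegative measurable integrands because $x$ is an absolutely continuous increasing bijection on each $[0,b]$ (locally integrable derivative near $0$ by the $\e$-condition), and letting $b\to\infty$ by monotone convergence, this equals $\int_0^\infty e^{-sx}|f(x)|\,dx=L[|f|](s)<\infty$. I interpret ``$L[f](s)$ exists'' in the same absolute sense as the paper's definition of $\mathcal{L}_T^\a$. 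Thus $g$ has a generalized Laplace transform at $s$.

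Finally, with existence secured, the hypothesis of Theorem~\ref{t:cv} holds for $g$. That theorem yields $\mathcal{L}_T^\a[g](s)=L[g(u(x))](s)=L[f](s)$. Alternatively, the same change of variables performed on $f$ itself, now justified by integrability, gives the identity directly.

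The only delicate point is the change of variables near $t=0$, where $T$ may vanish or blow up and $x'$ may be unbounded. I would handle it by applying the substitution on $[\delta,b]$, where everything is $C^1$, and passing $\delta\to 0^+$ and $b\to\infty$ using the $\e$-integrability condition and dominated or monotone convergence.
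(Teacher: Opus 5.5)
Your argument is correct and takes the intended route. The paper gives no proof of this corollary; it recalls it from \cite{bosch2021}, where it follows from the change of variables $x=\int_0^t d\omega/T(\omega,\a)$ behind Theorem~\ref{t:cv}, applied to $f\circ x$. Your separate check that $\mathcal{L}_T^\a[|f\circ x|](s)=L[|f|](s)<\infty$, reading existence of $L[f](s)$ in the absolute sense, is the step that prevents circularity, since Theorem~\ref{t:cv} itself assumes the generalized transform already exists.
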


\begin{Theorem} \label{t:LG}
	Let $f: [0,\infty) \rightarrow \RR$ be a locally absolutely continuous function such that
	there exist $\mathcal{L}_T^\a [f](s)$ and $\mathcal{L}_T^\a [G_T^\a f](s)$ for some $s$ and $0< \a \le 1$.
	Then
	$$
	\mathcal{L}_T^\a [G_T^\a f](s) = s \, \mathcal{L}_T^\a [f](s) - f(0).
	$$
\end{Theorem}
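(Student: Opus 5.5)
The plan is to prove Theorem~\ref{t:LG} by integration by parts, using the explicit form of $G_T^\a f$ from Theorem~\ref{t:comp}(2) and the derivative of the kernel $E_\a(-s,t)$ computed above.

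\textbf{Step 1: reduce to the ordinary derivative.} Since $0<\a\le 1$ and $f$ is locally absolutely continuous, $f'$ exists almost everywhere. At each such point Theorem~\ref{t:comp}(2) gives $G_T^\a f(t)=T(t,\a)f'(t)$. Hence
$$
\mathcal{L}_T^\a [G_T^\a f](s)=\int_0^\infty E_\a(-s,t)\, f'(t)\,dt ,
$$
because the factor $T(t,\a)$ cancels against the weight $1/T(t,\a)$. The assumed existence of $\mathcal{L}_T^\a [G_T^\a f](s)$ means precisely that this integrand lies in $L^1([0,\infty))$.

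\textbf{Step 2: integrate by parts on $[0,R]$.} The function $E_\a(-s,\cdot)$ is locally absolutely continuous on $[0,\infty)$. Indeed, $1/T$ is integrable near $0$ by the standing hypothesis and continuous on $(0,\infty)$, so $x(t)=\int_0^t d\omega/T(\omega,\a)$ is absolutely continuous, and composing with $\exp$ preserves this. The product of two locally absolutely continuous functions is again locally absolutely continuous, so on $[0,R]$ we get
$$
\int_0^R E_\a(-s,t) f'(t)\,dt = E_\a(-s,R)f(R)-f(0) + s\int_0^R E_\a(-s,t) f(t)\,\frac{dt}{T(t,\a)},
$$
using $E_\a(-s,0)=1$ and $(E_\a(-s,t))'=-s\,E_\a(-s,t)/T(t,\a)$.

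\textbf{Step 3: let $R\to\infty$.} By the hypotheses and dominated convergence, both integrals converge, to $\mathcal{L}_T^\a [G_T^\a f](s)$ and to $s\,\mathcal{L}_T^\a [f](s)$ respectively. It follows that $\ell=\lim_{R\to\infty}E_\a(-s,R)f(R)$ exists. The main obstacle is showing that $\ell=0$, since no growth condition on $f$ is assumed explicitly.

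I would argue as follows. Write $g(t)=E_\a(-s,t)f(t)$ and substitute $x=x(t)$. By $\int_0^\infty d\omega/T=\infty$, this maps $[0,\infty)$ onto $[0,\infty)$. Then $\int_0^\infty |g(t)|\,dt/T(t,\a)=\int_0^\infty |g(u(x))|\,dx<\infty$, which is exactly the change of variables behind Theorem~\ref{t:cv}. If $\ell\neq 0$, then $|g(u(x))|\ge |\ell|/2$ for all large $x$, which contradicts integrability on an infinite $x$-interval. Therefore $\ell=0$.

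Combining Steps 1--3 yields $\mathcal{L}_T^\a [G_T^\a f](s)=s\,\mathcal{L}_T^\a [f](s)-f(0)$.

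Alternatively, one could transfer everything to the classical Laplace transform via Theorem~\ref{t:cv}, with $F=f\circ u$, noting $F'(x)=f'(u(x))T(u(x),\a)=G_T^\a f(u(x))$. The classical rule $L[F']=sL[F]-F(0)$ then gives the result. However, the same boundary-term issue arises there, so the direct argument above is the one I would write.
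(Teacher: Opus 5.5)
Your proof is correct. The paper itself does not prove Theorem~\ref{t:LG}; it is quoted from \cite{bosch2021}.

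The closest argument in the paper is the appendix proof of its $\mu$-scaled analogue, Theorem~\ref{t:mu_derivative_transform}. That proof follows exactly your Steps 1 and 2: it replaces the generalized derivative by $T(t/\mu,\alpha)f'(t)$, cancels the weight, and integrates by parts against $e^{-s\mu x(t/\mu)}$. It then simply asserts that $e^{-s\mu x(t/\mu)}f(t)\to 0$ as $t\to\infty$. The paper establishes that limit only in Theorem~\ref{t:mu_exponential_order}, and only under an exponential-order bound on $f$ that is not among the hypotheses of the derivative theorem.

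Your Step 3 supplies exactly this missing justification, using only the stated hypotheses. The limit $\ell$ of the boundary term exists because both integrals converge. Then $\ell=0$ because the standing assumption $\int_0^\infty d\omega/T(\omega,\alpha)=\infty$ makes a nonzero limit incompatible with $\mathcal{L}_T^\alpha[|f|](s)<\infty$.

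You do not need the change of variables for that step. If $|E_\alpha(-s,t)f(t)|\ge |\ell|/2$ for $t\ge t_1$, then directly $\int_{t_1}^\infty |E_\alpha(-s,t)f(t)|\,dt/T(t,\alpha)\ge \tfrac{|\ell|}{2}\int_{t_1}^\infty dt/T(t,\alpha)=\infty$, because $1/T$ is integrable on $[0,t_1]$.

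The same argument, using $y_\mu(t)\to\infty$, repairs the paper's proof of the $\mu$-scaled version. Your remark that the route through Theorem~\ref{t:cv} meets the same boundary-term issue is also right. There the issue is settled identically once $f\circ u$ is seen to be locally absolutely continuous.
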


\section{The GLAP Method}
\label{sec:method}

Paix\~{a}o et al.~\cite{paixao2021} study the following logistic--Allee 
tumor growth model:
\begin{equation}
    N' = aN\left(1 - \frac{N}{b}\right)\left(1 - \frac{c+d}{N+d}\right),
    \quad N(0) = \eta,
\end{equation}
using parameter values inferred from in vivo tumor growth data, where 
$N(t)$ denotes the tumor cell population at time $t$, $a > 0$ is the 
intrinsic growth rate, $b > 0$ is the carrying capacity, and $c, d > 0$ 
are parameters governing the Allee threshold. In vivo data analyzed 
in~\cite{paixao2021} suggested the existence of a weak Allee effect in 
the tumor growth dynamics, supporting the biological relevance of this 
formulation.

In this work, we propose the following fractional-order extension of 
this model, replacing the classical first order derivative with the 
generalized fractional derivative $G_T^\a$ defined 
in Section~\ref{sec:theory}:
\begin{equation} \label{fequat}
\left\{
\begin{aligned}
    G_T^\a N &= aN\left(1 - \frac{N}{b}\right)
    \left(1 - \frac{c+d}{N+d}\right), \qquad \alpha \in (0,1]\\[6pt]
    N(0) &= \eta,
\end{aligned}
\right.
\end{equation}
where the parameters $a$, $b$, $c$, $d$, 
and $\eta$ take the values reported in \cite{paixao2021}. We choose the kernel \(T(t,\alpha)\) so that \(T(t,1)=1\), ensuring that the model reduces to the classical integer-order formulation when \(\alpha=1\).

Equation~\eqref{fequat} can be rewritten in a simpler form. Expanding the product, grouping the rational contributions, and using
\[
\frac{N(t)}{N(t)+d}
=
1-\frac{d}{N(t)+d},
\qquad
\frac{N^2(t)}{N(t)+d}
=
N(t)-d+\frac{d^2}{N(t)+d},
\]
we obtain
\begin{equation}\label{eq2}
G_T^\alpha N(t)
=
k_1N(t)
+
k_2\frac{1}{N(t)+d}
-
k_3N^2(t)
-
k_4,
\end{equation}
where
\[
k_1 = a\left(1+\frac{c+d}{b}\right),
\qquad
k_2 = ad(c+d)\left(1+\frac{d}{b}\right),
\]
\[
k_3 = \frac{a}{b},
\qquad
k_4 = a(c+d)\left(1+\frac{d}{b}\right).
\]

It is worth noting that the algebraic reformulation of the model is particularly convenient for the application of the Adomian Decomposition Method. In this form the equation involves simpler nonlinear operators, $N^2$ and $(N+d)^{-1}$, whose Adomian polynomials admit compact recursive constructions.

In particular, the quadratic term leads to a convolution representation of the polynomials, while the rational term generates a simple recurrence. This structure significantly simplifies the iterative scheme and reduces the computational effort compared with applying ADM directly to the original nonlinear expression.

Applying the generalized Laplace transform $\mathcal{L}_T^{\alpha}$ to equation \eqref{eq2} yields

\begin{equation*}
\mathcal{L}_T^{\alpha}[{ G }_{ T }^{ \alpha } N(t)](s)
=
\mathcal{L}_T^{\alpha}
\left[
k_1N(t)+k_2\frac{1}{N(t)+d}-k_3N^2(t)-k_4
\right](s).
\end{equation*}

By linearity of the transform, this becomes

\begin{align*}
\mathcal{L}_T^{\alpha}[{ G }_{ T }^{ \alpha } N(t)](s)
&=
k_1\,\mathcal{L}_T^{\alpha} [N(t)](s)
+
k_2\,\mathcal{L}_T^{\alpha}\!\left[\frac{1}{N(t)+d}\right](s)
\\&-
k_3\,\mathcal{L}_T^{\alpha}[N^2(t)](s)
-
k_4\,\mathcal{L}_T^{\alpha}[1](s).
\end{align*}

By Theorem~\ref{t:LG} and Proposition $2$, both recalled from \cite{bosch2021}, we have

\begin{align*}
s\mathcal{L}_T^{\alpha}[ N(t)](s)-\eta
&=
k_1\,\mathcal{L}_T^{\alpha} [N(t)](s)
+
k_2\,\mathcal{L}_T^{\alpha}\!\left[\frac{1}{N(t)+d}\right](s)
\\&-
k_3\,\mathcal{L}_T^{\alpha}[N^2(t)](s)
-
\frac{k_4}{s}.
\end{align*}

Solving for \(\mathcal{L}_T^{\alpha}[N(t)](s)\) gives

\begin{equation}\label{l1}
\begin{aligned}
\mathcal{L}_T^{\alpha}[ N(t)](s)
&=
\frac{\eta}{s-k_1} - \frac{k_4}{s(s-k_1)}
+
\frac{k_2}{s-k_1}\,\mathcal{L}_T^{\alpha}\!\left[\frac{1}{N(t)+d}\right](s)
\\
&-
\frac{k_3}{s-k_1}\,\mathcal{L}_T^{\alpha}[N^2(t)](s).
\end{aligned}
\end{equation}

We now assume the series representation
\begin{equation}\label{r1}
N(t)=\sum_{n=0}^{\infty} N_n(t).
\end{equation}
The nonlinear operators are decomposed as follows:

\begin{equation}\label{r2}
f(N)=N^2=\sum_{n=0}^{\infty} B_n(t),
\end{equation}

\begin{equation}\label{r3}
g(N)=\frac{1}{N(t)+d}=\sum_{n=0}^{\infty} A_n(t).
\end{equation}

Substituting \eqref{r1} into the identity

\[
(N(t)+d)\,g(N)=1,
\]

and replacing both functions by their series expansions gives

\[
\left(d+\sum_{n=0}^{\infty}N_n(t)\right)
\left(\sum_{n=0}^{\infty}A_n(t)\right)=1 .
\]

Equating coefficients of equal powers in the resulting Cauchy product
yields the recursive construction of the Adomian polynomials for
\(g(N)=(N+d)^{-1}\). The zeroth term is

\begin{equation}\label{a0}
    A_0=\frac{1}{N_0+d},
\end{equation}

and for \(n\ge1\)

\begin{equation}
    A_n=-\frac{1}{N_0+d}\sum_{k=1}^{n}N_k A_{n-k}.
\end{equation}

This recursive formulation avoids the explicit computation of higher derivatives and eliminates the need to use the general Rach--Adomian formula for the polynomials.\\

For the quadratic nonlinearity $f(N)=N^2$, the resulting
polynomials admit a closed convolution representation obtained
directly from the expansion of $\left(\sum_{k=0}^{\infty}N_k\right)^2$.
This yields

\begin{equation}\label{b0}
    B_n=\sum_{k=0}^{n} N_k N_{n-k}, \qquad n\ge0.
\end{equation}

This expression avoids the explicit computation of higher-order derivatives and leads to a more efficient implementation.\\

Substitution of \eqref{r1}, \eqref{r2}, and \eqref{r3} into \eqref{l1} gives

\begin{align*}
\mathcal{L}_T^{\alpha}\left[ \sum_{n=0}^{\infty} N_n(t)\right](s)
&=
\frac{\eta}{s-k_1} - \frac{k_4}{s(s-k_1)}
+
\frac{k_2}{s-k_1}\,\mathcal{L}_T^{\alpha}\!\left[\sum_{n=0}^{\infty} A_n(t)\right](s)
\\&-
\frac{k_3}{s-k_1}\,\mathcal{L}_T^{\alpha}\!\left[\sum_{n=0}^{\infty} B_n(t)\right](s).
\end{align*}

By linearity of the transform, this yields

\begin{equation}\label{l2}
\begin{aligned}
    \sum_{n=0}^{\infty}\mathcal{L}_T^{\alpha}\left[  N_n(t)\right](s)
    &=
    \frac{\eta}{s-k_1} - \frac{k_4}{s(s-k_1)}
    +
    \frac{k_2}{s-k_1}\,\sum_{n=0}^{\infty}\mathcal{L}_T^{\alpha}\!\left[ A_n(t)\right](s)
    \\&-
    \frac{k_3}{s-k_1}\,\sum_{n=0}^{\infty}\mathcal{L}_T^{\alpha}\!\left[ B_n(t)\right](s).
\end{aligned}
\end{equation}

Matching both sides of \eqref{l2} leads to the following iterative algorithm

$$
\begin{aligned}
\mathcal{L}_T^{\alpha}\left[N_0(t)\right](s)&=\frac{\eta}{s-k_1}- \frac{k_4}{s(s-k_1)},\\
\mathcal{L}_T^{\alpha}\left[N_1(t)\right](s)&=\frac{k_2}{s-k_1}\,\mathcal{L}_T^{\alpha}\!\left[ A_0(t)\right](s)-
\frac{k_3}{s-k_1}\,\mathcal{L}_T^{\alpha}\!\left[ B_0(t)\right](s),\\
\mathcal{L}_T^{\alpha}\left[N_2(t)\right](s)&=\frac{k_2}{s-k_1}\,\mathcal{L}_T^{\alpha}\!\left[ A_1(t)\right](s)-
\frac{k_3}{s-k_1}\,\mathcal{L}_T^{\alpha}\!\left[ B_1(t)\right](s), \\
\quad \vdots & \\
\mathcal{L}_T^{\alpha}\left[N_n(t)\right](s)&=\frac{k_2}{s-k_1}\,\mathcal{L}_T^{\alpha}\!\left[ A_{n-1}(t)\right](s)-\frac{k_3}{s-k_1}\,\mathcal{L}_T^{\alpha}\!\left[ B_{n-1}(t)\right](s), \quad n \geq 1.
\end{aligned}
$$

Solving the first equation of the recursive scheme gives \(N_0\). Substituting \(N_0\) into \eqref{a0} and \eqref{b0} gives \(A_0\) and \(B_0\), which are then used to compute \(N_1\). Higher-order terms can be computed recursively in the same way.\\

Consequently

$$
N_0(t)=\eta\mathcal{L}_{T,\alpha}^{-1}\left[\frac{1}{s-k_1}\right](t)- k_4\mathcal{L}_{T,\alpha}^{-1}\left[\frac{1}{s(s-k_1)}\right](t),
$$

by Proposition $2$ from \cite{bosch2021},

$$
N_0(t)=E_\a (k_1,t)\left(\eta-\frac{k_4}{k_1}\right) + \frac{k_4}{k_1},
$$
and
$$
B_0(t)=N_0^2(t)=\left(E_\a (k_1,t)\left(\eta-\frac{k_4}{k_1}\right) + \frac{k_4}{k_1}\right)^2,
$$

$$
A_0(t)=\frac{1}{N_0(t)+d}=\frac{1}{E_\a (k_1,t)\left(\eta-\frac{k_4}{k_1}\right) + \frac{k_4}{k_1} + d}.
$$

The previous expressions are used to compute \(N_1(t)\):

\begin{equation}\label{N1}
\begin{aligned}
N_1(t)&=\mathcal{L}_{T,\alpha}^{-1}\left[\frac{k_2}{s-k_1}\,\mathcal{L}_T^{\alpha}\!\left[\frac{1}{E_\a (k_1,t)\left(\eta-\frac{k_4}{k_1}\right) + \frac{k_4}{k_1} + d} \right](s)\right](t)\\
&-\mathcal{L}_{T,\alpha}^{-1}\left[\frac{k_3}{s-k_1}\,\mathcal{L}_T^{\alpha}\!\left[\left(E_\a (k_1,t)\left(\eta-\frac{k_4}{k_1}\right) + \frac{k_4}{k_1}\right)^2\right](s)\right](t),
\end{aligned}
\end{equation} 

The following inverse relation follows from Theorem~\ref{t:cv} and Corollary~\ref{c:cv}.

\begin{Corollary}\label{invL}
Let $F(s)$ be a function such that its classical inverse Laplace transform $L^{-1}[F(s)](t)$ exists, and let $0<\alpha \le 1$. Then
\[
\mathcal{L}_{T,\alpha}^{-1}[F(s)](t)
=
L^{-1}[F(s)]\big(x(t)\big),
\qquad
x(t)=\int_0^t \frac{d\omega}{T(\omega,\alpha)}.
\]
\end{Corollary}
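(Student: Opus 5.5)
The argument applies Corollary~\ref{c:cv} to the classical inverse $g = L^{-1}[F]$ and then invokes injectivity of the generalized transform. Everything else is checking hypotheses and interpreting $\mathcal{L}_{T,\alpha}^{-1}$ properly.

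\emph{Step 1.} Set $g(x) := L^{-1}[F(s)](x)$ for $x\ge 0$. By assumption $L[g](s)=F(s)$ holds for $s$ in some half-plane of absolute convergence.

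\emph{Step 2.} Apply Corollary~\ref{c:cv} to $g$. It gives that the function $t\mapsto g(x(t))$, with $x(t)=\int_0^t d\omega/T(\omega,\alpha)$, has a generalized Laplace transform at the same $s$, and that
\[
\mathcal{L}_T^{\alpha}\big[g(x(t))\big](s)=L[g](s)=F(s).
\]
The standing assumptions on $T$ are used here. The integrability near $0$ makes $x(t)$ finite. The divergence at infinity, together with the positivity of $T$, makes $x:[0,\infty)\to[0,\infty)$ a strictly increasing bijection. So $x(t)$ is a legitimate change of variables and $g\circ x$ is defined on all of $[0,\infty)$. The underlying computation is the substitution $x=x(t)$, $dx = dt/T(t,\alpha)$, $E_\alpha(-s,t)=e^{-sx}$, as in Theorem~\ref{t:cv}.

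\emph{Step 3.} Conclude that $g(x(t))$ is the generalized inverse transform of $F$. This needs the meaning of $\mathcal{L}_{T,\alpha}^{-1}$: a function $h$ with $\mathcal{L}_T^{\alpha}[h]=F$ should be unique up to null sets, or up to continuity points.

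This uniqueness is the main obstacle, since the paper has not formally defined the inverse. I would argue it by Theorem~\ref{t:cv}. Suppose $\mathcal{L}_T^{\alpha}[h_1]=\mathcal{L}_T^{\alpha}[h_2]$. Then $L[h_1\circ u]=L[h_2\circ u]$, where $u$ is the inverse of $x$. By Lerch's theorem (injectivity of the classical Laplace transform), $h_1\circ u=h_2\circ u$ almost everywhere.

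Since $u$ is a locally absolutely continuous homeomorphism, with $u'=T(u,\alpha)>0$ continuous, it maps null sets to null sets. Hence $h_1=h_2$ almost everywhere on $[0,\infty)$. Therefore $\mathcal{L}_{T,\alpha}^{-1}[F](t)=g(x(t))=L^{-1}[F](x(t))$, which is the claim.
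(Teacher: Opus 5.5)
Your proof is correct, but it runs in the opposite direction from the paper's.

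\textbf{The paper's route.} It starts from a function $N$ with $F=\mathcal{L}_T^{\alpha}[N]$. It pushes this to the classical side with Theorem~\ref{t:cv}, getting $F=L[N\circ u]$, then applies the classical inverse to obtain $L^{-1}[F]=N\circ u$, and evaluates at $x(t)$. The identification $\mathcal{L}_{T,\alpha}^{-1}[F]=N$ is then taken ``by definition''.

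\textbf{Your route.} You start from the classical inverse $g=L^{-1}[F]$ and use Corollary~\ref{c:cv} to show that $g\circ x$ actually has generalized transform $F$. You then prove injectivity of $\mathcal{L}_T^{\alpha}$, via Theorem~\ref{t:cv}, Lerch's theorem, and the fact that $u$ preserves null sets. This makes $g\circ x$ the generalized inverse up to null sets.

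\textbf{What each buys.} Your Step~3 is essentially the content of the paper's argument: any preimage must equal $L^{-1}[F]\circ x$. You make it measure-theoretically precise, whereas the paper relies tacitly on classical Lerch injectivity and states the identity pointwise. Your Step~2 is an extra existence step. The paper skips it by assuming from the outset that $F$ lies in the range of $\mathcal{L}_T^{\alpha}$, even though the stated hypothesis concerns only the classical inverse. So your version proves the corollary under the hypothesis actually stated and makes the well-definedness of $\mathcal{L}_{T,\alpha}^{-1}$ explicit. The paper's version is shorter but leans implicitly on both existence and uniqueness.

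\textbf{A small precision point.} $T$ is only assumed continuous and positive on $(0,\infty)$, so $u'=T(u,\alpha)$ holds only for $x>0$. The derivative $u'(0)$ may vanish, e.g.\ for $T(t,\alpha)=t^{1-\alpha}$. Absolute continuity of $u$ on each $[0,A]$ still follows from monotonicity and $u'\ge 0$. Alternatively, $u$ is locally Lipschitz on $(0,\infty)$ and $\{0\}$ is null. Either way your null-set argument is unaffected.
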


\begin{proof}
Let
\[
F(s)=\mathcal{L}_T^\alpha[N](s).
\]
Then, by Theorem~\ref{t:cv}, we have
\[
F(s)=L[N(u(x))](s),
\]
where $u$ is the inverse function of $x(t)$. Applying the classical inverse Laplace transform, we obtain
\[
L^{-1}[F(s)](x)=N(u(x)).
\]
Evaluating at $x=x(t)$, it follows that
\[
L^{-1}[F(s)]\big(x(t)\big)=N(u(x(t)))=N(t).
\]
Since, by definition,
\[
\mathcal{L}_{T,\alpha}^{-1}[F(s)](t)=N(t),
\]
we conclude that
\[
\mathcal{L}_{T,\alpha}^{-1}[F(s)](t)
=
L^{-1}[F(s)]\big(x(t)\big).
\]
\end{proof}

To simplify the notation, let
\[
h_1 := \eta - \frac{k_4}{k_1}, 
\qquad
h_2 := \frac{k_4}{k_1}, 
\qquad
h_3 := \frac{k_4}{k_1} + d.
\]

Applying Corollary~\ref{invL} to \eqref{N1}, we obtain

\[
\begin{aligned}
N_1(t)
&=
k_2\,L^{-1}\!\left[
\frac{1}{s-k_1}L\!\left(\frac{1}{h_1 e^{k_1 x}+h_3}\right)(s)
\right]\!\big(x(t)\big)
\\[1ex]
&\quad
-
k_3\,L^{-1}\!\left[
\frac{1}{s-k_1}L\!\left((h_1 e^{k_1 x}+h_2)^2\right)(s)
\right]\!\big(x(t)\big).
\end{aligned}
\]

Hence

\[
\mathcal{L}_{T,\alpha}^{-1}\!\left[
\frac{1}{s-k_1}\mathcal{L}_T^\alpha[B_0(t)](s)
\right](t)
=
\int_0^{x(t)}
e^{k_1(x(t)-\xi)}
\left(h_1e^{k_1\xi}+h_2\right)^2\,d\xi .
\]

Evaluating the integral, we obtain

\[
\begin{aligned}
\mathcal{L}_{T,\alpha}^{-1}\!\left[
\frac{1}{s-k_1}\mathcal{L}_T^\alpha[B_0(t)](s)
\right](t)
&=
\frac{h_1^2}{k_1}
\left(E_\alpha(2k_1,t)-E_\alpha(k_1,t)\right)
\\
&\quad
+
2h_1h_2\,x(t)\,E_\alpha(k_1,t)
+
\frac{h_2^2}{k_1}
\left(E_\alpha(k_1,t)-1\right).
\end{aligned}
\]

Similarly,

\[
\mathcal{L}_{T,\alpha}^{-1}\!\left[
\frac{1}{s-k_1}\mathcal{L}_T^\alpha[A_0(t)](s)
\right](t)
=
\int_0^{x(t)} e^{k_1(x(t)-\xi)}\frac{1}{h_1e^{k_1\xi}+h_3}\,d\xi.
\]

Evaluating the integral, we obtain
\[
\mathcal{L}_{T,\alpha}^{-1}\!\left[
\frac{1}{s-k_1}\mathcal{L}_T^\alpha[A_0(t)](s)
\right](t)
=
\frac{E_\alpha(k_1,t)-1}{k_1h_3}
+
\frac{h_1E_\alpha(k_1,t)}{k_1h_3^2}
\ln\!\left(
\frac{h_1+h_3E_\alpha(-k_1,t)}{h_1+h_3}
\right).
\]

Combining both contributions,
$$
\begin{aligned}
N_1(t)
&=
k_2\left[
\frac{E_\alpha(k_1,t)-1}{k_1 h_3}
+
\frac{h_1 E_\alpha(k_1,t)}{k_1 h_3^2}
\ln\!\left(
\frac{h_1 + h_3 E_\alpha(-k_1,t)}{h_1 + h_3}
\right)
\right]
\\[1ex]
&\quad
-
k_3\left[
\frac{h_1^2}{k_1}\left(E_\alpha(2k_1,t)-E_\alpha(k_1,t)\right)
+
2h_1 h_2\,x(t)\,E_\alpha(k_1,t)
+
\frac{h_2^2}{k_1}\left(E_\alpha(k_1,t)-1\right)
\right].
\end{aligned}
$$
Although higher-order corrections can be constructed recursively from the same scheme, the first-order approximation already provides a stable representation on the transformed interval considered below.

\section{Optimization Strategy and Chebyshev--Pad\'e Construction}
\label{sec:optimization_cp}

In this work, the kernel \(T\) is treated as an optimization-controlled deformation kernel for the transformed representation rather than as a prescribed function chosen a priori. We considered the parametric family
\[
T(t,\alpha)=\sigma^{1-\alpha}\left(1+c(1-\alpha)t\right)^p,
\]
which, for \(p\neq 1\), gives
\[
x(t;\alpha,c,\sigma,p)
=
\sigma^{\alpha-1}
\frac{
\left(1+c(1-\alpha)t\right)^{1-p}-1
}{
c(1-\alpha)(1-p)
}.
\]
This family is consistent with the classical case, since \(T(t,1)=1\).

The transformation parameters were computed by optimizing the first-order approximation
\[
S_1(x(t;z))=N_0(x(t;z))+N_1(x(t;z)),
\qquad
z=(\alpha,c,\sigma,p),
\]
against the experimental data. The objective function was
\[
\mathcal{J}(z)
=
\left[
\frac{1}{m}
\sum_{i=1}^{m}
\left(
S_1(x(t_i;z))-N_i^{\mathrm{data}}
\right)^2
\right]^{1/2}.
\]
The admissible candidates were required to satisfy
\[
1+c(1-\alpha)t>0
\]
and
\[
\frac{h_1+h_3e^{-k_1x(t;z)}}{h_1+h_3}>0
\]
on \(0\leq t\leq 120\). These conditions keep the transformed variable well defined and ensure that the logarithmic term in \(N_1\) remains real-valued. The interval slightly exceeds the last experimental time \(t=114.039\), giving a safety margin beyond the data range.

The critical point associated with the logarithmic term is determined by
\[
h_1+h_3e^{-k_1x}=0,
\qquad
x_{\mathrm{crit}}
=
-\frac{1}{k_1}
\ln\left(-\frac{h_1}{h_3}\right).
\]
For \(\eta=271.309\), this gives
\[
x_{\mathrm{crit}}\approx 4.70388.
\]
Since
\[
x(114.039)\approx 3.95690,
\]
the optimized transformation keeps the approximation inside the admissible real domain over the data range.

The optimized parameters were
\[
\alpha=0.621897419143,\qquad
c=6.098704653588\times 10^{-7},
\]
\[
\sigma=1600.310101849,\qquad
p=48562.047025080.
\]

These values are transform parameters, not biological parameters of the tumor-growth model. Their role is to define an admissible transformed representation where the semi-analytical approximation remains real-valued and stable on the data interval. In the optimization process, admissible candidates with different parameter magnitudes produced reasonable fits, suggesting a nonconvex landscape with possible parameter degeneracy.

The reported values correspond to the lowest-error admissible parameter set identified during the optimization process. The admissible transform-parameter region and the associated interior optimality condition are analyzed in Section~\ref{sec:error_admissibility}.

A multistage global--local optimization strategy was used. The admissible range of \(p\) was divided into several bands. In each band, candidates were generated by means of global log-uniform sampling and local sampling around the best admissible basin.

Particle swarm optimization was then employed to explore each band, and the best candidates were refined with \texttt{fmincon} in MATLAB R2023B using both SQP and interior-point algorithms.

Before applying the rational correction, the raw approximation \(S_1(x(t))\) was examined. Figure~\ref{fig:S1-raw} shows that \(S_1\) is smooth and stable over the transformed data interval. This is relevant for the present equation: in the classical formulation, the corresponding \(N_1\) term reaches the boundary of its real admissible domain before covering the full data range.

\begin{figure}[htbp]
    \centering
    \includegraphics[
        width=0.85\textwidth,
        height=7cm,
        keepaspectratio
    ]{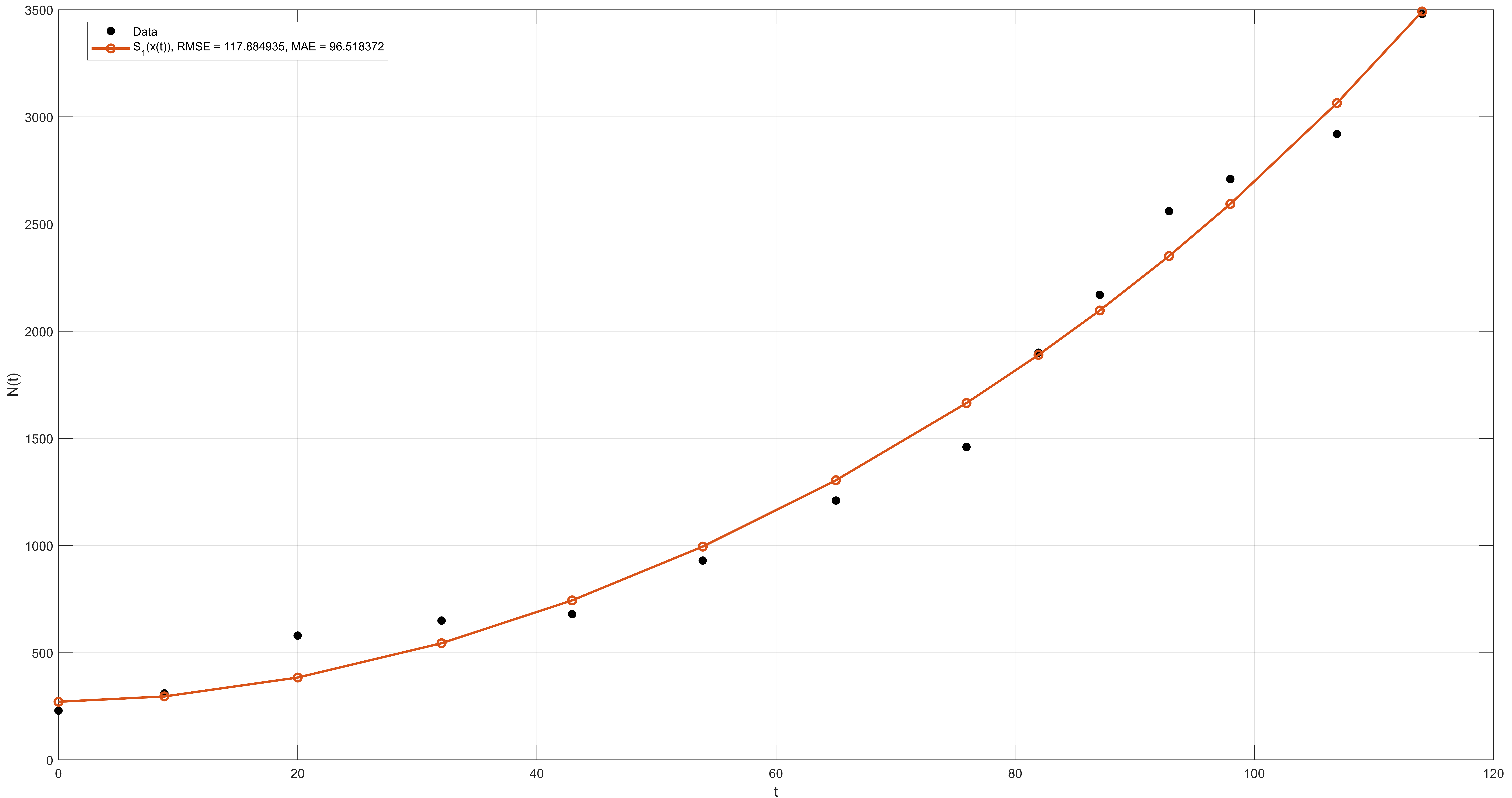}
    \caption{First-order approximation \(S_1(x(t))\) before the Chebyshev--Pad\'e reconstruction 
(RMSE = 117.88, MAE = 96.52, MAX = 209.42).}
    \label{fig:S1-raw}
\end{figure}

After the transformation was fixed, Maple 2026 was used to construct a Chebyshev--Pad\'e approximation directly from the analytical expression
\[
S_1(x)=N_0(x)+N_1(x).
\]
The approximation was built in the variable \(x\) and then evaluated. Maple's \texttt{chebpade} command was applied on an interval \([0,x_{\max}]\). The transformed data range was
\[
x(114.039)\approx 3.95690,
\]
and a slight refinement of the construction endpoint was tested. The best result was obtained with
\[
x_{\max}^{\mathrm{Cheb}}=3.991,\qquad [L/M]=[1/2].
\]

The corresponding rational approximation derived by Maple is reported below, with coefficients rounded for readability:
\[
R_{1,2}(x)
=
-\frac{-595.663290713065+40.791184534975\,x}
{2.346174259049-1.027911760024\,x+0.118034500717\,x^2}.
\]

A key advantage of the present approach is that the optimized \(S_1(x)\) is already well behaved on the relevant transformed interval. Thus, the rational correction can be constructed from a Chebyshev representation rather than from a purely local Taylor-type basis. 

In many Laplace--Adomian implementations, Pad\'e approximants are used to extend truncated local expansions. Here, the generalized transformation first produces an admissible low-order representation, and the rational reconstruction is then applied to that representation.

This admissibility is not obtained in the corresponding classical formulation for the present equation: the first correction term reaches the boundary of its real domain before covering the full data range. The generalized transformed formulation avoids this obstruction and gives a compact first-order approximation on the complete interval.

For reference, we reconstructed numerical solutions using MATLAB's \texttt{ode45} solver and a fixed-step RK4 scheme, following Paix\~{a}o et al.~\cite{paixao2021}. Table~\ref{T1} and Figure~\ref{fig:real-chebpade-ode45-rk4} show that the Chebyshev--Pad\'e reconstruction gives comparable error levels while retaining an explicit semi-analytical structure.
\begin{figure}[H]
    \centering
    \includegraphics[
        width=0.95\textwidth,
        height=8cm,
        keepaspectratio
    ]{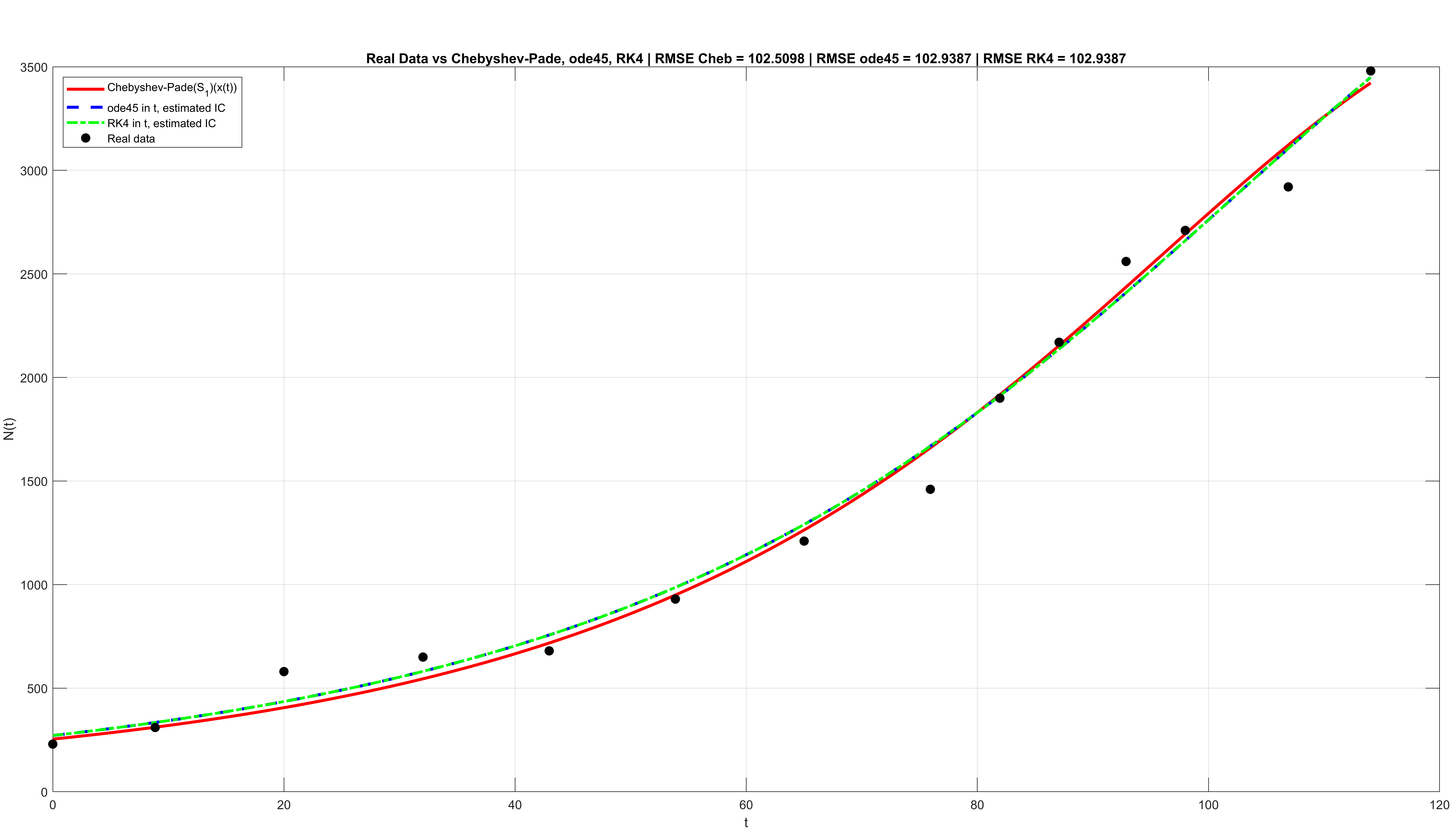}
    \caption{Real data versus Chebyshev--Pad\'e approximation, \texttt{ode45}, and RK4.}
    \label{fig:real-chebpade-ode45-rk4}
\end{figure}

\begin{table}[H]
\centering
\caption{Error comparison for the Chebyshev--Pad\'e approximation obtained from \(S_1(x(t))\).}
\label{T1}
\begin{tabular}{|c|c|c|c|}
\hline
\textbf{Method} & \textbf{RMSE} & \textbf{MAE} & \textbf{MAX} \\
\hline
\(\text{Cheb-Pad\'e}(S_1)(x(t))\)
& 102.51
& 75.0
& 202.3 \\
\hline
\(\texttt{ode45}\ \text{estIC}\)
& 102.94
& 82.9
& 208.4 \\
\hline
\(\text{RK4}\ \text{estIC}\)
& 102.94
& 82.9
& 208.4 \\
\hline
\end{tabular}
\end{table}

\section{The $\mu$-Scaled Generalized Transform}
\label{sec:mu_transform}

The generalized Laplace transform deforms the independent variable through $x(t)=\int_0^t \frac{d\omega}{T(\omega,\alpha)}.$
However, the admissibility and accuracy of the Laplace--Adomian approximation may also depend on the scale of the transformed interval. This motivates the \(\mu\)-scaled transformed variable $y_\mu(t)=\mu x(t/\mu),$ together with the corresponding scaled kernel \(T(t/\mu,\alpha)\).

The parameter \(\mu>0\) generates a family of transformed representations associated with the same model. Unlike an external rescaling after the approximation is constructed, \(\mu\) remains part of the inverse representation and affects the domain where the semi-analytical approximation is evaluated. This additional degree of freedom is useful in the present problem because the first correction term \(N_1\) contains a logarithmic factor whose real-valued admissibility depends on the transformed variable.

We develop below the basic properties needed for the Transform--Adomian--Pad\'e construction.

The following defines the $\mu-$scaled generalized derivative, which is a generalization of the generalized derivative given in Definition \ref{d:g0}.

\begin{Definition}
\label{NGD}
Given an interval $I \subseteq \RR$, $f: I \rightarrow {\mathbb{R}}$, $\alpha,\mu \in \RR^+$, such that $\mu \in (0,\infty)$, and a positive
continuous function $T(\frac{t}{\mu},\alpha )$ on $I$,
the \emph{derivative} $G_{T,\mu}^{\alpha }f$ of $f$ of order $\alpha$ at the point $t \in I$ is defined by
\begin{equation*}
G_{T,\mu}^{\alpha}f(t)
=
\lim_{h\to 0}
\frac{1}{h^{{\lceil \alpha \rceil}}}
\sum_{k=0}^{{\lceil \alpha \rceil}}
(-1)^k
\binom{{\lceil \alpha \rceil}}{k}
f\!\left(t-kh\,T\!\left(\frac{t}{\mu},\alpha\right)\right).
\end{equation*}
\end{Definition}

\begin{Theorem}
\label{t:comp2}
Let $I \subseteq \RR$ be an interval, $f: I \rightarrow {\mathbb{R}}$ and $\alpha,\mu \in \RR^+$.

$(1)$ If the derivative \(D^{\lceil\a\rceil}f\) exists at the point \(t\in I\), then $f$ is $G_{T,\mu}^\a$-differentiable at $t$ and
$G_{T,\mu}^{\alpha }f(t)= T(\frac{t}{\mu},\alpha)^{\lceil \a \rceil}D^{\lceil \a \rceil}f(t)$.

$(2)$ If $\a \in (0,1]$, then $f$ is $G_{T,\mu}^\a$-differentiable at $t \in I$ if and only if $f$ is differentiable at
$t$;
in this case, we have $G_{T,\mu}^{\alpha }f(t)= T(\frac{t}{\mu},\alpha ) f'(t)$.
\end{Theorem}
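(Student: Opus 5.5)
The plan is to reduce Theorem~\ref{t:comp2} directly to Theorem~\ref{t:comp}. Fix $\mu>0$ and define the auxiliary kernel $\widetilde T(t,\alpha):=T(t/\mu,\alpha)$. By the hypothesis of Definition~\ref{NGD}, $\widetilde T$ is a positive continuous function on $I$. Comparing Definition~\ref{d:g0} with Definition~\ref{NGD} term by term, the difference quotients coincide, so
\[
G_{T,\mu}^{\alpha}f(t)=G_{\widetilde T}^{\alpha}f(t)
\]
for every $f$ and $t\in I$. This identity holds in the sense that one side exists if and only if the other does, with the same one-sided convention at the endpoints of $I$.

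Once this identification is in place, both parts follow at once.

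\textbf{Part (1).} If $D^{\lceil\alpha\rceil}f(t)$ exists, Theorem~\ref{t:comp}(1) applied with kernel $\widetilde T$ gives
\[
G_{T,\mu}^\alpha f(t)=\widetilde T(t,\alpha)^{\lceil\alpha\rceil}D^{\lceil\alpha\rceil}f(t)=T(t/\mu,\alpha)^{\lceil\alpha\rceil}D^{\lceil\alpha\rceil}f(t).
\]

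\textbf{Part (2).} For $\alpha\in(0,1]$, Theorem~\ref{t:comp}(2) with kernel $\widetilde T$ gives the equivalence between $G_{T,\mu}^\alpha$-differentiability and ordinary differentiability at $t$, together with the formula $G_{T,\mu}^\alpha f(t)=T(t/\mu,\alpha)f'(t)$.

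There is little real obstacle here. The only point needing care is that Theorem~\ref{t:comp} is stated for an arbitrary positive continuous kernel, and the argument uses it in exactly that generality. Nothing in the proof depends on the structural assumptions later imposed on $T$, such as the integrability conditions.

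For a self-contained argument in the case $\lceil\alpha\rceil=1$, I would instead compute directly. Set $\tau=T(t/\mu,\alpha)>0$. The quotient $(f(t)-f(t-h\tau))/h$ equals $\tau$ times the ordinary difference quotient at step $h\tau$. Since $\tau\neq0$, the map $h\mapsto h\tau$ is a bijection of punctured neighbourhoods of $0$, so the limit exists exactly when $f'(t)$ exists, and it then equals $\tau f'(t)$.

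For general $\lceil\alpha\rceil=n$, the same substitution turns the quotient into $\tau^{n}$ times the $n$-th backward difference quotient at step $h\tau$. That quotient converges to $D^nf(t)$ whenever $D^nf(t)$ exists, which follows from a Taylor expansion with Peano remainder together with the identities $\sum_k(-1)^k\binom{n}{k}k^j=0$ for $j<n$ and $(-1)^n n!$ for $j=n$.
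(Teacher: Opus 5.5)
Your proof is correct, and your primary route differs from the paper's. The paper re-proves the result from scratch: it substitutes \(q=h\,T(t/\mu,\alpha)\) in the limit of Definition~\ref{NGD}, factors out \(T(t/\mu,\alpha)^{\lceil\alpha\rceil}\), and identifies what remains with \(D^{\lceil\alpha\rceil}f(t)\). For part (2) it identifies it with \(f'(t)\), noting that the limit exists if and only if \(f\) is differentiable.

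You instead observe that \(G^{\alpha}_{T,\mu}\) is literally \(G^{\alpha}_{\widetilde T}\) for the kernel \(\widetilde T(t,\alpha)=T(t/\mu,\alpha)\). This kernel is positive and continuous on \(I\) by the very hypothesis of Definition~\ref{NGD}. Theorem~\ref{t:comp2} then becomes an immediate corollary of Theorem~\ref{t:comp}, with no new computation. This makes clear that the \(\mu\)-scaling adds nothing at the level of the derivative; it matters only in the transform, through the factor \(\mu\) and the variable \(y_\mu\).

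The cost of this route is small. You rely on the result recalled in Section~\ref{sec:theory}, and you read Definition~\ref{NGD} with the one-sided endpoint convention of Definition~\ref{d:g0}. The paper never states that convention for the \(\mu\)-scaled derivative, and its own proof does not treat endpoints either.

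Your self-contained fallback is exactly the paper's substitution argument, but it is more complete than the paper's. The paper simply asserts that the \(n\)-th backward difference quotient tends to \(D^{n}f(t)\) whenever \(D^{n}f(t)\) exists. Your Taylor expansion with Peano remainder, together with \(\sum_{k}(-1)^k\binom{n}{k}k^j=0\) for \(j<n\) and \(\sum_{k}(-1)^k\binom{n}{k}k^n=(-1)^n n!\), actually justifies that step.
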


The proof is given in Appendix~\ref{app:proof_tcomp2}.

For each $0< \a \le 1$, and $c,t \in \RR$, $\mu \in \RR^+$ where $\mu \in (0,\infty)$, we define

$$
E_{\alpha}(\mu c,t/\mu)
=
\exp\left(
\mu c\int_{0}^{t/\mu}\frac{d\omega}{T(\omega,\alpha)}
\right).
$$
A direct computation gives

\begin{equation*}
    \begin{aligned}
     G_{T,\mu}^{\alpha}\left[E_{\alpha}(\mu c,t/\mu)\right]
    &=
    T\!\left(\frac{t}{\mu},\alpha\right)\,
    \exp\left(
    \mu c\int_{0}^{t/\mu}\frac{d\omega}{T(\omega,\alpha)}
    \right)'\\
    &=
    T\!\left(\frac{t}{\mu},\alpha\right)
    \exp\left(
    \mu c\int_{0}^{t/\mu}\frac{d\omega}{T(\omega,\alpha)}
    \right)\,
    \frac{\mu c}{\mu\,T(t/\mu,\alpha)}\\
    &=
    c\, E_{\alpha}(\mu c,t/\mu).
    \end{aligned}
\end{equation*}

This proves that $E_{\alpha}(\mu c,t/\mu)$ is an eigenfunction.

\begin{Definition}
Given \(0<\alpha\leq 1\), \(0<\mu<\infty\), and a measurable function
\(f:[0,\infty)\to\RR\), we define its \(\mu\)-scaled generalized transform as
\begin{equation*}
CC_{T,\mu}^{\alpha}[f](s,\mu)
=
\mu\int_{0}^{\infty}
e^{-s\mu x(t/\mu)}\,f(t)\,
\frac{dt}{T(t/\mu,\alpha)},
\end{equation*}
whenever
\[
\mu\int_{0}^{\infty}
\left|e^{-s\mu x(t/\mu)}f(t)\right|
\frac{dt}{T(t/\mu,\alpha)}
<\infty.
\]
\end{Definition}

Here, we assume the same conditions on the functions \(T\) and \(x(t)\) as in the generalized Laplace transform. Note that

$$
x\!\left(\frac{t}{\mu}\right)=\int_{0}^{t/\mu}\frac{d\omega}{T(\omega,\alpha)}, \qquad \left(x\!\left(\frac{t}{\mu}\right)\right)'=\frac{1}{\mu\,T(t/\mu,\alpha)}.
$$

Moreover, the \(\mu\)-scaled transform recovers the generalized Laplace transform when \(\mu=1\):
$$
CC_{T,1}^{\alpha}[f](s,1)=\mathcal{L}_{T}^{\alpha}[f](s).
$$

\begin{Theorem}\label{t:mu_exponential_order}
Let \(f:[0,\infty)\to\RR\) be a measurable and locally integrable function, and let
\(0<\alpha\leq 1\), \(0<\mu<\infty\). Suppose that there exist constants
\(M>0\), \(c\in\RR\), and \(t_0\geq0\) such that
\[
|f(t)|\leq M E_{\alpha}(\mu c,t/\mu),
\qquad t\geq t_0.
\]
Then \(CC_{T,\mu}^{\alpha}[f](s,\mu)\) exists for
\(\Re(s)>c\). Moreover,
\[
\lim_{t\to\infty}
e^{-s\mu x(t/\mu)}f(t)=0,
\qquad \Re(s)>c.
\]
If the above estimate holds for every \(t\geq0\), then
\[
\left|
CC_{T,\mu}^{\alpha}[f](s,\mu)
\right|
\leq
\frac{\mu M}{\Re(s)-c},
\qquad \Re(s)>c.
\]
\end{Theorem}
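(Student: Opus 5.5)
The plan is to change variables to $y=\mu x(t/\mu)$, which turns the $\mu$-scaled transform into a classical Laplace integral in $y$, in the spirit of Theorem~\ref{t:cv}. We have
\[
dy=\frac{dt}{T(t/\mu,\alpha)},
\]
and $y$ is strictly increasing because $T>0$. Moreover $y$ runs over $[0,\infty)$, since $\int_0^\infty d\omega/T(\omega,\alpha)=\infty$. The bound in the statement then reads
\[
|f(t)|\le M\exp\big(\mu c\,x(t/\mu)\big)=Me^{cy},
\]
so everything should reduce to the estimate $|e^{-sy}f|\le Me^{-(\Re(s)-c)y}$.

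\textbf{Existence.} Write $\sigma=\Re(s)>c$ and split the integral at $t_0$.
\begin{itemize}
\item On $[0,t_0]$: $T(\cdot/\mu,\alpha)$ is continuous and positive on $(0,t_0/\mu]$, and $1/T$ is integrable near $0$ by the standing assumption $\int_0^\varepsilon d\omega/T<\infty$. I would argue local boundedness of $1/T$ away from $0$. Together with $|e^{-s\mu x(t/\mu)}|\le 1$ and local integrability of $f$, this makes this piece finite.
\item The main subtlety is near $t=0$, where $1/T$ may blow up while $f$ is only $L^1_{\rm loc}$. The product need not be integrable there without extra hypotheses. I will handle it by requiring local boundedness of $f$ near $0$, or by noting that the bound for all $t\ge0$ suffices in the last claim. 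Where necessary I will state this as the interpretation of ``locally integrable'' with respect to the measure $dt/T(t/\mu,\alpha)$.
\item On $[t_0,\infty)$: the integrand is bounded by $M\exp(-(\sigma-c)\mu x(t/\mu))/T(t/\mu,\alpha)$. Substituting $y$ gives
\[
\mu M\int_{y_0}^\infty e^{-(\sigma-c)y}\,dy<\infty .
\]
\end{itemize}
This gives absolute convergence.

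\textbf{Limit.} For $t\ge t_0$,
\[
\big|e^{-s\mu x(t/\mu)}f(t)\big|\le M\exp\big(-(\sigma-c)\mu x(t/\mu)\big).
\]
Since $x(t/\mu)\to\infty$ as $t\to\infty$ (divergence of $\int_0^\infty d\omega/T$) and $\sigma>c$, the right side tends to $0$.

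\textbf{Bound.} If the estimate holds on all of $[0,\infty)$, the same substitution over the whole half-line gives
\[
|CC_{T,\mu}^{\alpha}[f](s,\mu)|\le \mu M\int_0^\infty e^{-(\sigma-c)y}\,dy=\frac{\mu M}{\sigma-c}.
\]
The factor $\mu$ is the one in front of the integral in the definition. I expect this step to be routine; the only real obstacle is the integrability near $t=0$ noted above.
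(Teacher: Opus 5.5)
Your argument follows the paper's proof. Both use the substitution $y_\mu(t)=\mu x(t/\mu)$, under which $E_\alpha(\mu c,t/\mu)=e^{c\,y_\mu(t)}$ and $dy_\mu=dt/T(t/\mu,\alpha)$. Both split at $t_0$ and dominate the tail by $\mu M\int_{y_\mu(t_0)}^\infty e^{-(\Re(s)-c)r}\,dr$. Both get the limit from $y_\mu(t)\to\infty$ and the global bound by integrating over all of $[0,\infty)$.

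The obstruction you flag at $t=0$ is genuine, and the paper's proof does not overcome it. The paper calls the piece over $[0,t_0]$ finite because $f$ is locally integrable and $dt/T(t/\mu,\alpha)$ has finite mass on bounded intervals. That does not follow: an $L^1_{\mathrm{loc}}$ function times an integrable density need not be integrable. For instance, take $T(t,\alpha)=t^{1-\alpha}$ with $0<\alpha<1$, which meets all the standing assumptions, together with $\mu=1$. Let $f(t)=t^{-\alpha}$ for $0<t<1$ and $f(t)=0$ otherwise. Then $f$ is locally integrable and trivially satisfies the growth bound for $t\geq 1$. Yet $|e^{-s x(t)}f(t)|/T(t,\alpha)\sim t^{-1}$ near $0$, so the transform exists for no $s$.

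So the existence claim as stated is false in this generality, and it genuinely needs an extra hypothesis like the one you propose. Any of the following suffices: $f$ integrable near $0$ with respect to $dt/T(t/\mu,\alpha)$; $f$ bounded near $0$; or $T$ bounded below near $0$. The last condition holds for the paper's parametric kernel, since it has $T(0,\alpha)>0$. The limit claim and the final bound are unaffected. The limit concerns only large $t$, and the global estimate in the last claim already makes $f$ bounded near $0$.

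There is one small slip. The bound $|e^{-s\mu x(t/\mu)}|\le 1$ holds only for $\Re(s)\ge 0$. When $c<0$, the range $\Re(s)>c$ includes negative real parts. On $[0,t_0]$ use $|e^{-s y_\mu(t)}|\le e^{|\Re(s)|\,y_\mu(t_0)}$ instead, which is all the argument needs.
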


The proof is given in Appendix~\ref{app:proof_mu_exponential_order}.

\begin{Theorem}
\label{t:mu_derivative_transform}
Let \(f:[0,\infty)\to\RR\) be a locally absolutely continuous function such that
\(CC_{T,\mu}^{\alpha}[f](s,\mu)\) and
\(CC_{T,\mu}^{\alpha}[G_{T,\mu}^{\alpha}f](s,\mu)\) exist for some \(s\), with
\(0<\alpha\leq 1\) and \(0<\mu<\infty\). Then
\begin{equation}
CC_{T,\mu}^{\alpha}[G_{T,\mu}^{\alpha}f](s,\mu)
=
s\,CC_{T,\mu}^{\alpha}[f](s,\mu)-\mu f(0).
\end{equation}
\end{Theorem}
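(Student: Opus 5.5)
Since $0<\alpha\le 1$ and $f$ is locally absolutely continuous, $f$ is differentiable almost everywhere. Theorem~\ref{t:comp2}(2) then gives $G_{T,\mu}^{\alpha}f(t)=T(t/\mu,\alpha)f'(t)$ almost everywhere. Substituting this into the definition of the $\mu$-scaled transform, the kernel factor $1/T(t/\mu,\alpha)$ cancels:
\[
CC_{T,\mu}^{\alpha}[G_{T,\mu}^{\alpha}f](s,\mu)=\mu\int_0^\infty e^{-s\mu x(t/\mu)}f'(t)\,dt .
\]
The plan is to integrate by parts on a finite interval $[0,R]$ and then let $R\to\infty$. This is legitimate because $f$ is absolutely continuous on $[0,R]$, and $t\mapsto e^{-s\mu x(t/\mu)}$ is $C^1$, since $T$ is continuous and positive.

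Using $\big(x(t/\mu)\big)'=1/(\mu T(t/\mu,\alpha))$, we have
\[
\frac{d}{dt}e^{-s\mu x(t/\mu)}=-\frac{s}{T(t/\mu,\alpha)}\,e^{-s\mu x(t/\mu)}.
\]
Integration by parts therefore gives
\[
\mu\int_0^R e^{-s\mu x(t/\mu)}f'(t)\,dt=\mu e^{-s\mu x(R/\mu)}f(R)-\mu f(0)+s\,\mu\int_0^R e^{-s\mu x(t/\mu)}f(t)\frac{dt}{T(t/\mu,\alpha)} ,
\]
where we used $x(0)=0$. As $R\to\infty$, the last integral tends to $s\,CC_{T,\mu}^{\alpha}[f](s,\mu)$ because that transform exists. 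The left-hand side tends to $CC_{T,\mu}^{\alpha}[G_{T,\mu}^{\alpha}f](s,\mu)$ because this transform exists as well.

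The main obstacle is the boundary term $e^{-s\mu x(R/\mu)}f(R)$. We do not assume exponential order, so Theorem~\ref{t:mu_exponential_order} is not available. However, the identity above shows that $\lim_{R\to\infty}e^{-s\mu x(R/\mu)}f(R)=:\ell$ exists, since every other term converges. To show $\ell=0$, suppose $\ell\neq0$. Then $|e^{-s\mu x(t/\mu)}f(t)|\ge|\ell|/2$ for all large $t$, and hence
\[
\int^\infty |e^{-s\mu x(t/\mu)}f(t)|\frac{dt}{T(t/\mu,\alpha)}\ \ge\ \frac{|\ell|}{2}\int^\infty\frac{dt}{T(t/\mu,\alpha)}=\frac{|\ell|\mu}{2}\int^\infty\frac{d\omega}{T(\omega,\alpha)}=\infty ,
\]
by the standing assumption $\int_0^\infty d\omega/T(\omega,\alpha)=\infty$. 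This contradicts the absolute integrability required for $CC_{T,\mu}^{\alpha}[f](s,\mu)$ to exist. Hence $\ell=0$ and the stated formula follows.

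As an alternative route, one could substitute $y=\mu x(t/\mu)$. This reduces the $\mu$-scaled transform to the classical Laplace transform of $f$ composed with the inverse change of variable, in analogy with Theorem~\ref{t:cv}. One would then invoke the classical derivative rule, and the factor $\mu$ on $f(0)$ comes from the normalization of the transform. I would nevertheless use the direct argument above.
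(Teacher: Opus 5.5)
Your proof follows the paper's argument (replace $G_{T,\mu}^{\alpha}f$ by $T(t/\mu,\alpha)f'$, cancel the kernel, integrate by parts), and your handling of the boundary term is more careful than the paper's. The paper simply asserts $\lim_{t\to\infty}e^{-s\mu x(t/\mu)}f(t)=0$, although the result that yields this limit, Theorem~\ref{t:mu_exponential_order}, needs an exponential-order bound that is not among the hypotheses; your truncation to $[0,R]$, combined with the contradiction against $\int_0^\infty d\omega/T(\omega,\alpha)=\infty$, derives it from the existence of the two transforms alone. One small slip: the standing assumptions only make $T$ positive and continuous on $(0,\infty)$ (e.g.\ $T(t,\alpha)=t^{1-\alpha}$), so $e^{-s\mu x(t/\mu)}$ need not be $C^1$ at $t=0$; it is, however, absolutely continuous on $[0,R]$, which is all the integration by parts requires.
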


The proof is given in Appendix~\ref{app:proof_mu_derivative_transform}.

\begin{Proposition}\label{constant}
There exists the \(\mu\)-scaled generalized transform of the constant function \(1\) for \(0<\alpha\leq 1\) and \(0<\mu<\infty\). If \(s>0\), then
$$
CC_{T,\mu}^{\alpha}[1](s,\mu)=\frac{\mu}{s}.
$$
\end{Proposition}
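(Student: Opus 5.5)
We compute $CC_{T,\mu}^{\alpha}[1](s,\mu)$ directly from the definition, using the substitution $y=\mu x(t/\mu)$, which turns the integral into an elementary exponential integral.

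\textbf{Step 1: the change of variables.} By the identity noted after the definition,
\[
\frac{d}{dt}\,\mu x(t/\mu)=\frac{1}{T(t/\mu,\alpha)}.
\]
So the measure $dt/T(t/\mu,\alpha)$ is exactly $dy$ under $y=\mu x(t/\mu)$. The map $t\mapsto y$ is continuous and strictly increasing, since $T>0$ and is continuous. It satisfies $y(0)=0$, because $\int_0^\varepsilon d\omega/T<\infty$ makes $x$ finite near $0$. It also satisfies $y(t)\to\infty$ as $t\to\infty$, because $\int_0^\infty d\omega/T=\infty$. Hence it is a bijection of $[0,\infty)$ onto itself, and
\[
CC_{T,\mu}^{\alpha}[1](s,\mu)=\mu\int_0^\infty e^{-s\mu x(t/\mu)}\frac{dt}{T(t/\mu,\alpha)}=\mu\int_0^\infty e^{-sy}\,dy .
\]

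\textbf{Step 2: existence.} The integrand is nonnegative, so the same substitution gives the absolute integral as $\mu\int_0^\infty e^{-sy}\,dy$. This is finite for $s>0$, which establishes existence. Alternatively, Theorem~\ref{t:mu_exponential_order} applies with $f\equiv1$, $M=1$, $c=0$, since $E_\alpha(0,t/\mu)=1$. That route gives existence for $\Re(s)>0$ and the bound $\mu/s$ directly.

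\textbf{Step 3: evaluation.} Evaluating the elementary integral gives
\[
\mu\int_0^\infty e^{-sy}\,dy=\frac{\mu}{s}.
\]

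The only point needing care is justifying the change of variables on an unbounded interval with a possibly singular kernel at $0$. Our plan is to apply the substitution on compact intervals $[\delta,R]$, where $x(\cdot/\mu)$ is $C^1$, and then let $\delta\to0^+$ and $R\to\infty$. Monotone convergence handles the limit, using the two integrability assumptions on $T$ as described in Step 1.
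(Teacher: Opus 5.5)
Your proof is correct and follows essentially the same route as the paper. The paper also substitutes along the transformed variable: it writes $dt/T(t/\mu,\alpha)=\mu\,d\,x(t/\mu)$ and gets $\mu^2\int_0^\infty e^{-s\mu x}\,dx$, whereas you use $y=\mu x(t/\mu)$ directly, and both then evaluate the elementary exponential integral using $x(0)=0$ and $x(t/\mu)\to\infty$. Your extra justification of existence via nonnegativity of the integrand, and of the improper change of variables via compact truncations and monotone convergence, supplies rigor the paper leaves implicit.
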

The proof is given in Appendix~\ref{app:proof_constant}.

\begin{Proposition}\label{exp}
For \(0<\alpha\leq 1\), \(0<\mu<\infty\), and \(c\in\mathbb{C}\), one has
$$
CC_{T,\mu}^{\alpha}\left[E_\alpha(\mu c,t/\mu)\right](s,\mu)
=
\frac{\mu}{s-c},
$$
whenever the integral exists.
\end{Proposition}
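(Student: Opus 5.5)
The plan is to compute the transform directly from the definition by the substitution $y=\mu x(t/\mu)$, which reduces the integral to a classical exponential integral. By definition,
\[
CC_{T,\mu}^{\alpha}\left[E_\alpha(\mu c,t/\mu)\right](s,\mu)
=
\mu\int_0^\infty e^{-s\mu x(t/\mu)}\,e^{c\mu x(t/\mu)}\,\frac{dt}{T(t/\mu,\alpha)}
=
\mu\int_0^\infty e^{-(s-c)\mu x(t/\mu)}\,\frac{dt}{T(t/\mu,\alpha)},
\]
because $E_\alpha(\mu c,t/\mu)=\exp\bigl(\mu c\,x(t/\mu)\bigr)$.

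Next I will change variables with $y=\mu x(t/\mu)$. From the identity $\bigl(x(t/\mu)\bigr)'=1/(\mu T(t/\mu,\alpha))$ recorded before Theorem~\ref{t:mu_exponential_order}, we get $dy=dt/T(t/\mu,\alpha)$. The standing hypotheses on $T$ give the endpoint behaviour: positivity and continuity make $x$ strictly increasing, $x(0)=0$, and $\int_0^\infty d\omega/T(\omega,\alpha)=\infty$ gives $y\to\infty$ as $t\to\infty$. So the map is a $C^1$ increasing bijection of $[0,\infty)$ onto $[0,\infty)$. The local integrability $\int_0^\varepsilon d\omega/T<\infty$ ensures $x$ is finite near $0$. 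The integral then becomes
\[
\mu\int_0^\infty e^{-(s-c)y}\,dy .
\]

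For complex $c$, I would apply the substitution to the real and imaginary parts separately, or simply note that it holds for absolutely integrable complex integrands. The integral converges exactly when $\Re(s)>\Re(c)$. This is what ``whenever the integral exists'' refers to: existence of the transform, meaning absolute convergence, is equivalent to $\Re(s-c)>0$ by the same substitution applied to the modulus. Evaluating gives $\mu/(s-c)$.

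The only delicate point is justifying the change of variables on an infinite interval when $1/T$ may be singular at $0$. I will handle it by applying the substitution on $[\delta,R]$, where everything is continuous, and then letting $\delta\to0^+$ and $R\to\infty$ by monotone or dominated convergence, using absolute integrability. As a consistency check, taking $c=0$ recovers Proposition~\ref{constant}.
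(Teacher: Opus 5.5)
Your proposal is correct and follows the paper's own proof: it rewrites $E_\alpha(\mu c,t/\mu)=e^{c\,y_\mu(t)}$, substitutes $y=y_\mu(t)$ with $dy=dt/T(t/\mu,\alpha)$, and evaluates $\mu\int_0^\infty e^{-(s-c)y}\,dy=\mu/(s-c)$. Your additional care with the change of variables near $0$ (via $[\delta,R]$ and a limit) and your identification of the convergence region as $\Re(s)>\Re(c)$ supply details the paper leaves implicit.
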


The proof is given in Appendix~\ref{app:proof_exp}.

\begin{Theorem}
\label{t:mu_laplace_relation}
Let \(f:[0,\infty)\to\RR\) be a measurable function such that
\(CC_{T,\mu}^{\alpha}[f](s,\mu)\) exists for some \(s\), with
\(0<\alpha\leq 1\) and \(0<\mu<\infty\). Let
$$
y_\mu(t)=\mu x\left(\frac{t}{\mu}\right),
\qquad
x(t)=\int_0^t\frac{d\omega}{T(\omega,\alpha)}.
$$
If \(u_\mu=y_\mu^{-1}\) and
$$
g_\mu(r)=f(u_\mu(r)),
$$
then
$$
CC_{T,\mu}^{\alpha}[f](s,\mu)
=
\mu L[g_\mu](s).
$$
\end{Theorem}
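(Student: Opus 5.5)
The plan is a direct change of variables $r=y_\mu(t)=\mu x(t/\mu)$ in the integral defining $CC_{T,\mu}^{\alpha}[f](s,\mu)$, mirroring the argument behind Theorem~\ref{t:cv}.

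\emph{The substitution is admissible.} Since $T$ is positive and continuous on $(0,\infty)$, the map $y_\mu$ is continuous on $[0,\infty)$ and $C^1$ on $(0,\infty)$, with
\[
y_\mu'(t)=\mu\cdot\frac{1}{\mu\,T(t/\mu,\alpha)}=\frac{1}{T(t/\mu,\alpha)}>0 .
\]
Hence $y_\mu$ is strictly increasing. It satisfies $y_\mu(0)=\mu x(0)=0$, which is finite thanks to the integrability assumption $\int_0^\e d\omega/T<\infty$. It also satisfies $y_\mu(t)\to\infty$ as $t\to\infty$ because $\int_0^\infty d\omega/T=\infty$. So $y_\mu$ is a bijection of $[0,\infty)$ onto itself, and $u_\mu=y_\mu^{-1}$ is well defined, increasing, and locally absolutely continuous on $(0,\infty)$.

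\emph{Rewrite the integrand.} Using $y_\mu'(t)=1/T(t/\mu,\alpha)$, the definition becomes
\[
CC_{T,\mu}^{\alpha}[f](s,\mu)=\mu\int_0^\infty e^{-s\,y_\mu(t)}\,f(t)\,y_\mu'(t)\,dt .
\]
Substituting $r=y_\mu(t)$, so $t=u_\mu(r)$ and $dr=y_\mu'(t)\,dt$, gives
\[
\mu\int_0^\infty e^{-sr}f(u_\mu(r))\,dr=\mu L[g_\mu](s).
\]

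\emph{The main obstacle} is rigor in the change of variables, because $f$ is only measurable and $T$ may blow up or vanish near $0$. I would handle it in two steps.

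First, apply the substitution to the nonnegative function $|e^{-sy_\mu(t)}f(t)|$. For a monotone locally absolutely continuous change of variables, the formula $\int_0^\infty \phi(y_\mu(t))y_\mu'(t)\,dt=\int_0^\infty\phi(r)\,dr$ holds for every nonnegative measurable $\phi$. One can see this via the monotone class theorem starting from indicators of intervals, where it reduces to the fundamental theorem of calculus for $y_\mu$. Alternatively, work on $[\delta,R]$, where $y_\mu$ is $C^1$, and then let $\delta\to0$, $R\to\infty$ by monotone convergence. This shows that the assumed finiteness of the $\mu$-scaled integral is equivalent to $e^{-sr}g_\mu(r)\in L^1([0,\infty))$, so $L[g_\mu](s)$ exists.

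Second, apply the same identity to the positive and negative parts of $f$ (or the real and imaginary parts, if $s$ is complex) and subtract, which yields the equality. As a consistency check, setting $\mu=1$ recovers Theorem~\ref{t:cv}.
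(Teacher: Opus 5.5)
Your proposal is correct and follows the paper's proof: the paper also uses $y_\mu'(t)=1/T(t/\mu,\alpha)$ to rewrite the transform as $\mu\int_0^\infty e^{-s y_\mu(t)}f(t)\,dy_\mu(t)$, then substitutes $r=y_\mu(t)$, $t=u_\mu(r)$ to obtain $\mu\int_0^\infty e^{-sr}f(u_\mu(r))\,dr=\mu L[g_\mu](s)$. The extra justification you supply is rigor the paper leaves implicit: bijectivity of $y_\mu$ from the two integrability conditions on $T$, the change-of-variables formula for nonnegative measurable functions applied to $|e^{-sy_\mu}f|$, and the split into positive and negative parts, which also shows that existence of $CC_{T,\mu}^{\alpha}[f](s,\mu)$ gives existence of $L[g_\mu](s)$.
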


The proof is given in Appendix~\ref{app:proof_mu_laplace_relation}.

\begin{Corollary}\label{invrelat}
Under the hypotheses of the previous theorem, if
\(F(s,\mu)=CC_{T,\mu}^{\alpha}[f](s,\mu)\), then
$$
f(t)
=
L^{-1}\left[
\frac{1}{\mu}F(s,\mu)
\right](y_\mu(t)).
$$
Equivalently,
$$
f(t)
=
L^{-1}\left[
\frac{1}{\mu}CC_{T,\mu}^{\alpha}[f](s,\mu)
\right]
\left(
\mu x\left(\frac{t}{\mu}\right)
\right).
$$
\end{Corollary}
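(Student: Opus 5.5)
The plan is to follow the same pattern as the proof of Corollary~\ref{invL}, with Theorem~\ref{t:mu_laplace_relation} in place of Theorem~\ref{t:cv}. By Theorem~\ref{t:mu_laplace_relation} we have
\[
F(s,\mu)=CC_{T,\mu}^{\alpha}[f](s,\mu)=\mu\,L[g_\mu](s),
\qquad g_\mu(r)=f(u_\mu(r)).
\]
Dividing by the constant \(\mu>0\) gives \(\frac{1}{\mu}F(s,\mu)=L[g_\mu](s)\). The classical inverse Laplace transform of \(\frac{1}{\mu}F(s,\mu)\) is therefore \(g_\mu\), so
\[
L^{-1}\Bigl[\tfrac{1}{\mu}F(s,\mu)\Bigr](r)=g_\mu(r)=f(u_\mu(r)).
\]

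We then evaluate at \(r=y_\mu(t)\). This requires \(u_\mu(y_\mu(t))=t\) for all \(t\ge 0\), so I will first record that \(y_\mu\) is a bijection of \([0,\infty)\) onto itself. By the standing assumptions on \(T\) (positive and continuous, integrable reciprocal near \(0\), divergent integral at infinity), \(x\) is continuous and strictly increasing on \([0,\infty)\), with \(x(0)=0\) and \(x(t)\to\infty\). Hence \(y_\mu(t)=\mu x(t/\mu)\) has the same properties, and its inverse \(u_\mu\) is well defined on \([0,\infty)\). This gives
\[
f(t)=f(u_\mu(y_\mu(t)))=L^{-1}\Bigl[\tfrac{1}{\mu}F(s,\mu)\Bigr](y_\mu(t)).
\]
The equivalent form in the statement follows by substituting \(y_\mu(t)=\mu x(t/\mu)\).

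The main obstacle is making the step ``\(L^{-1}[L[g_\mu]]=g_\mu\)'' rigorous, since the classical inverse is unique only up to a null set (Lerch's theorem). I would handle this in the same way the paper treats Corollary~\ref{invL}. The identity is understood for the representative \(g_\mu\) chosen by the inversion, so it holds pointwise wherever \(f\) (equivalently \(g_\mu\)) is continuous, and almost everywhere otherwise. Because \(u_\mu\) is a \(C^1\)-diffeomorphism with positive derivative \(\mu T(u_\mu(r)/\mu,\alpha)\), it maps null sets to null sets, so this ``almost everywhere'' statement transfers from \(r\) back to \(t\).
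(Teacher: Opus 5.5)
Your proposal is correct and follows essentially the same route as the paper's proof: apply Theorem~\ref{t:mu_laplace_relation}, divide by \(\mu\), invert the classical Laplace transform to recover \(g_\mu=f\circ u_\mu\), and evaluate at \(r=y_\mu(t)\). Your remarks on \(y_\mu\) being a bijection of \([0,\infty)\) and on Lerch's almost-everywhere uniqueness add rigor that the paper leaves implicit; one small slip is that, since \(y_\mu'(t)=1/T(t/\mu,\alpha)\), the correct derivative is \(u_\mu'(r)=T(u_\mu(r)/\mu,\alpha)\) without the extra factor \(\mu\), which is harmless because you only use its positivity.
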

The proof is given in Appendix~\ref{app:proof_invrelat}.

\begin{Theorem}
\label{t:mu_convolution}
Let \(f,g:[0,\infty)\to\RR\) be measurable functions such that all the transforms involved are well defined, with \(0<\alpha\leq 1\) and \(0<\mu<\infty\). Let
$$
y_\mu(t)=\mu x\left(\frac{t}{\mu}\right),
\qquad
x(t)=\int_0^t\frac{d\omega}{T(\omega,\alpha)},
\qquad
u_\mu=y_\mu^{-1}.
$$
Define the \(\mu\)-convolution by
$$
(f*_\mu g)(t)
=
\int_0^t
f\!\left(u_\mu(y_\mu(t)-y_\mu(\omega))\right)
g(\omega)
\frac{d\omega}{T(\omega/\mu,\alpha)}.
$$
Then
$$
CC_{T,\mu}^{\alpha}[f*_\mu g](s,\mu)
=
\frac{1}{\mu}
CC_{T,\mu}^{\alpha}[f](s,\mu)\,
CC_{T,\mu}^{\alpha}[g](s,\mu).
$$
\end{Theorem}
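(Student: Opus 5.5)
The plan is to reduce the statement to the classical convolution theorem for the Laplace transform, using Theorem~\ref{t:mu_laplace_relation} as the bridge. The kernel change of variables \(r=y_\mu(t)\) should turn the \(\mu\)-convolution into an ordinary convolution. Then the factor \(\mu\) in \(CC_{T,\mu}^{\alpha}[\,\cdot\,]=\mu L[\,\cdot\circ u_\mu]\) accounts for the \(1/\mu\) in the claimed identity.

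\textbf{Step 1: \(y_\mu\) is a valid change of variables.} Differentiating \(y_\mu(t)=\mu x(t/\mu)\) gives
\[
y_\mu'(t)=\mu\cdot\frac{1}{\mu}\,x'(t/\mu)=\frac{1}{T(t/\mu,\alpha)}>0 .
\]
By the standing assumptions on \(T\) (positive and continuous, \(\int_0^\e d\omega/T<\infty\), \(\int_0^\infty d\omega/T=\infty\)), \(y_\mu\) is a strictly increasing, locally absolutely continuous bijection of \([0,\infty)\) onto itself with \(y_\mu(0)=0\). Hence \(u_\mu=y_\mu^{-1}\) is well defined on \([0,\infty)\), and the argument \(u_\mu(y_\mu(t)-y_\mu(\omega))\) makes sense for \(0\le\omega\le t\).

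\textbf{Step 2: rewrite the \(\mu\)-convolution.} Set \(F=f\circ u_\mu\) and \(G=g\circ u_\mu\). In the integral defining \((f*_\mu g)(t)\), substitute \(r=y_\mu(\omega)\), so that \(dr=d\omega/T(\omega/\mu,\alpha)\) and \(g(\omega)=G(r)\). This gives
\[
(f*_\mu g)(t)=\int_0^{y_\mu(t)}F\big(y_\mu(t)-r\big)\,G(r)\,dr=(F*G)\big(y_\mu(t)\big),
\]
where \(*\) is the classical Laplace convolution. Equivalently, \((f*_\mu g)\circ u_\mu=F*G\).

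\textbf{Step 3: apply Theorem~\ref{t:mu_laplace_relation} three times.} First to \(f*_\mu g\), then to \(f\) and to \(g\), and combine with the classical convolution theorem:
\[
CC_{T,\mu}^{\alpha}[f*_\mu g](s,\mu)=\mu L[F*G](s)=\mu\,L[F](s)\,L[G](s)
=\mu\cdot\frac{CC_{T,\mu}^{\alpha}[f](s,\mu)}{\mu}\cdot\frac{CC_{T,\mu}^{\alpha}[g](s,\mu)}{\mu},
\]
which is the claim.

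\textbf{Main obstacle.} The only delicate point is justifying the classical convolution theorem, that is, the Fubini--Tonelli interchange, for \(F\) and \(G\). I would handle it by the same substitution \(r=y_\mu(t)\) inside the definition of the transform. This shows that existence of \(CC_{T,\mu}^{\alpha}[|f|]\) at \(s\) is equivalent to \(e^{-sr}F(r)\in L^1(0,\infty)\), and likewise for \(g\). Tonelli applied to \(|F|\) and \(|G|\) then gives absolute convergence of \(\int_0^\infty e^{-sr}\int_0^r |F(r-\rho)|\,|G(\rho)|\,d\rho\,dr\), which legitimizes the interchange. The hypothesis that all transforms are well defined covers any residual integrability issue.
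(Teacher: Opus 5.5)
Your proof is correct, but it is not the route the paper takes for this theorem. The paper's proof (Appendix~\ref{app:proof_mu_convolution}) works directly from the definition. It writes $CC_{T,\mu}^{\alpha}[f*_\mu g]$ as a double integral and swaps the order to $\int_0^\infty\int_\omega^\infty$. It then splits $e^{-sy_\mu(t)}=e^{-sy_\mu(\omega)}e^{-s(y_\mu(t)-y_\mu(\omega))}$ and, for fixed $\omega$, substitutes $y_\mu(z)=y_\mu(t)-y_\mu(\omega)$. After this, the inner integral is $\frac{1}{\mu}CC_{T,\mu}^{\alpha}[f](s,\mu)$ and the outer one is $\frac{1}{\mu}CC_{T,\mu}^{\alpha}[g](s,\mu)$.

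Your reduction goes through Theorem~\ref{t:mu_laplace_relation} and the classical convolution theorem via $(f*_\mu g)\circ u_\mu=F*G$. This is essentially the argument the paper gives for the mirror-image statement, Theorem~\ref{t:mu_convolution_alt}.

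Your route buys two things. First, the identity $(f*_\mu g)\circ u_\mu=F*G$ makes the structure transparent; in particular, $*_\mu$ inherits commutativity from the classical convolution, so the two $\mu$-convolutions defined in the paper agree up to swapping $f$ and $g$. Second, your Tonelli step is an actual justification. It rests on the fact that the paper's definition of $CC_{T,\mu}^{\alpha}$ already demands absolute convergence, which after $r=y_\mu(t)$ says exactly $e^{-sr}F,\;e^{-sr}G\in L^1(0,\infty)$. That legitimizes the interchange and even yields existence of $CC_{T,\mu}^{\alpha}[f*_\mu g]$, whereas the paper simply invokes Fubini.

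The paper's direct route is more self-contained: it needs neither Theorem~\ref{t:mu_laplace_relation} nor the classical convolution theorem, only one interchange of integrals and a translation-type substitution in the $y_\mu$ variable.
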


The proof is given in Appendix~\ref{app:proof_mu_convolution}.

The first two components of the Transform--Adomian decomposition are produced as follows.

Applying the \(\mu\)-scaled generalized transform to the linear part of the
recursive scheme gives
$$
s\,CC_{T,\mu}^{\alpha}[N](s,\mu)-\mu\eta
=
k_1\,CC_{T,\mu}^{\alpha}[N](s,\mu)
-
k_4\,CC_{T,\mu}^{\alpha}[1](s,\mu).
$$

The zeroth component satisfies
$$
CC_{T,\mu}^{\alpha}[N_0](s,\mu)
=
\frac{\mu\eta}{s-k_1}
-
\frac{k_4\mu}{s(s-k_1)}.
$$
Using Corollary~\ref{invrelat} and Propositions~\ref{constant} and~\ref{exp}, we obtain
$$
N_0(t,\mu)
=
E_{\alpha}(\mu k_1,t/\mu)
\left(
\eta-\frac{k_4}{k_1}
\right)
+
\frac{k_4}{k_1}.
$$
Since \(x(0)=0\), it follows that
$$
N_0(0,\mu)=\eta,
\qquad 0<\mu<\infty,
$$
so the initial condition is preserved for every finite positive value of \(\mu\).

For convenience, define
$$
h_1:=\eta-\frac{k_4}{k_1},
\qquad
h_2:=\frac{k_4}{k_1},
\qquad
h_3:=\frac{k_4}{k_1}+d.
$$

The first correction component \(N_1(t,\mu)\) satisfies
$$
CC_{T,\mu}^{\alpha}[N_1](s,\mu)
=
\frac{k_2}{s-k_1}CC_{T,\mu}^{\alpha}[A_0](s,\mu)
-
\frac{k_3}{s-k_1}CC_{T,\mu}^{\alpha}[B_0](s,\mu).
$$
Using the \(\mu\)-convolution theorem, we obtain
$$
N_1(t,\mu)
=
E_{\alpha}(\mu k_1,t/\mu)
\left[
k_2I_1(t,\mu)-k_3I_2(t,\mu)
\right],
$$
where
$$
I_1(t,\mu)
=
\frac{1}{k_1h_3}\left(1-E_{\alpha}(-\mu k_1,t/\mu)\right)
+
\frac{h_1}{k_1h_3^2}
\ln\!\left(
\frac{h_1E_{\alpha}(\mu k_1,t/\mu)+h_3}
{(h_1+h_3)E_{\alpha}(\mu k_1,t/\mu)}
\right),
$$
and
$$
I_2(t,\mu)
=
\frac{h_1^2}{k_1}\left(E_{\alpha}(\mu k_1,t/\mu)-1\right)
+
2\mu h_1h_2\,x(t/\mu)
+
\frac{h_2^2}{k_1}
\left(1-E_{\alpha}(-\mu k_1,t/\mu)\right).
$$
The detailed computation of \(I_1(t,\mu)\) and \(I_2(t,\mu)\) is given in Appendix~\ref{app:computation_N1_mu}.

Equivalently, replacing \(h_1,h_2,h_3\) by their definitions yields
$$
\begin{aligned}
N_1(t,\mu)
&=
E_{\alpha}(\mu k_1,t/\mu)
\Bigg[
k_2\Bigg(
\frac{1}{k_1\left(\frac{k_4}{k_1}+d\right)}
\left(1-E_{\alpha}(-\mu k_1,t/\mu)\right)
\\
&\qquad\qquad
+
\frac{\eta-\frac{k_4}{k_1}}
{k_1\left(\frac{k_4}{k_1}+d\right)^2}
\ln\!\left(
\frac{
\left(\eta-\frac{k_4}{k_1}\right)E_{\alpha}(\mu k_1,t/\mu)
+
\left(\frac{k_4}{k_1}+d\right)
}{
(\eta+d)E_{\alpha}(\mu k_1,t/\mu)
}
\right)
\Bigg)
\\
&\qquad
-
k_3\Bigg(
\frac{\left(\eta-\frac{k_4}{k_1}\right)^2}{k_1}
\left(E_{\alpha}(\mu k_1,t/\mu)-1\right)
+
2\mu\left(\eta-\frac{k_4}{k_1}\right)\frac{k_4}{k_1}x(t/\mu)
\\
&\qquad\qquad
+
\frac{\left(\frac{k_4}{k_1}\right)^2}{k_1}
\left(1-E_{\alpha}(-\mu k_1,t/\mu)\right)
\Bigg)
\Bigg].
\end{aligned}
$$

Since
$$
(\eta+d)E_{\alpha}(\mu k_1,t/\mu)>0,
$$
the real-valuedness of the logarithmic term is determined by the numerator. Therefore, the formula for \(N_1(t,\mu)\) requires
$$
\left(\eta-\frac{k_4}{k_1}\right)E_{\alpha}(\mu k_1,t/\mu)
+
\left(\frac{k_4}{k_1}+d\right)>0.
$$
\section{Optimization Strategy for the \(\mu\)-Scaled Generalized Transform and Chebyshev--Pad\'e Construction}
\label{sec:mu_optimization_cp}

The previous construction is extended by introducing the \(\mu\)-scaled generalized transform, whose transformed variable is
\[
y_\mu(t)=\mu x\left(\frac{t}{\mu}\right),
\qquad
x(t)=\int_0^t \frac{d\omega}{T(\omega,\alpha)}.
\]
This extension generates a family of transformed variables indexed by \(0<\mu<\infty\). Unlike a simple external rescaling, the parameter \(\mu\) remains present after applying the inverse transform, and therefore different values of \(\mu\) lead to different semi-analytical representations associated with the same underlying model. To keep the differential dynamics consistent with this transformed variable, the generalized derivative is evaluated through the scaled kernel \(T(t/\mu,\alpha)\).

For the same parametric family
\[
T(t,\alpha)= \sigma^{1-\alpha}\left(1+c(1-\alpha)t\right)^p,
\]
the transformed variable associated with the \(\mu\)-scaled construction is
\[
y_\mu(t;\alpha,c,\sigma,p)
=
\mu  \sigma^{\alpha-1}
\frac{
\left(1+c(1-\alpha)\frac{t}{\mu}\right)^{1-p}-1
}{
c(1-\alpha)(1-p)
},
\qquad p\neq 1.
\]
The case \(\mu=1\) recovers the transformed variable of the generalized Laplace transform. Thus, the parameter \(\mu\) introduces an additional degree of freedom in the analytical representation.

The optimization was performed on the first-order approximation
\[
S_1(y_\mu(t;z,\mu))=N_0(y_\mu(t;z,\mu))+N_1(y_\mu(t;z,\mu)),
\qquad
z=(\alpha,c, \sigma,p),
\]
where \(\mu\) was fixed during each internal optimization. The objective function was
\[
\mathcal{J}(z,\mu)
=
\left[
\frac{1}{m}
\sum_{i=1}^{m}
\left(
S_1(y_\mu(t_i;z,\mu))-N_i^{\mathrm{data}}
\right)^2
\right]^{1/2}.
\]
The optimization is performed over the transformation parameters, while the fitted quantity remains the analytical approximation \(S_1(y_\mu(t))\).

The admissible candidates were required to satisfy
\[
1+c(1-\alpha)\frac{t}{\mu}>0
\]
and
\[
\frac{h_1+h_3e^{-k_1y_\mu(t;z,\mu)}}{h_1+h_3}>0
\]
on the prescribed interval \(0\leq t\leq 120\). These conditions keep the transformed variable well defined and ensure that the logarithmic term appearing in \(N_1\) remains real-valued. As in the generalized Laplace transform case, this interval slightly exceeds the last experimental time \(t=114.039\), providing a safety margin beyond the data range.

The critical point is determined by the zero of the logarithmic argument,
\[
h_1+h_3e^{-k_1y}=0.
\]
Therefore,
\[
y_{\mathrm{crit}}
=
-\frac{1}{k_1}
\ln\left(-\frac{h_1}{h_3}\right).
\]
For the estimated initial condition \(\eta=271.309\), this gives
\[
y_{\mathrm{crit}}\approx 4.70388.
\]
For the optimized \(\mu\)-scaled transform, the largest transformed value on the interval of interest satisfies
\[
y_\mu(120)\approx 4.05571,
\]
and the largest transformed data point satisfies
\[
y_\mu(114.039)\approx 3.95690.
\]
Thus, the optimized representation remains inside the admissible real domain of \(N_1\) over the full interval considered.

The optimized parameters for the \(\mu\)-scaled generalized transform were
\[
\mu=9.5\times 10^{5},\qquad
\alpha=0.627828,\qquad
c=0.00320005,
\]
\[
\sigma=1800.00898,\qquad
p=8.93219\times 10^{6}.
\]

The same interpretation extends naturally to the $\mu$-scaled formulation. The parameter \(\mu\), together with \(\alpha,c, \sigma,\) and \(p\), defines a transformed representation and should not be assigned direct biological meaning.

Large values of $\mu$ and $p$ should be interpreted as features of the transformed representation selected by the admissibility-constrained optimization, rather than as physical characteristics of the underlying tumor-growth process.

Smaller parameter regimes also produced admissible approximations during the search, but the larger-scale representation reported here gave the lowest error, although the improvement was modest.

This suggests a nonconvex optimization landscape in which different transform geometries may lead to comparable admissible fits. 
The corresponding parameter region and interior optimality condition are characterized in Section~\ref{sec:error_admissibility}.

These parameters were generated through a multistage global--local optimization strategy. A range of fixed \(\mu\)-values was tested, and for each fixed \(\mu\) the remaining parameters \((\alpha,c, \sigma,p)\) were optimized. The admissible range of \(p\) was divided into several bands. In each band, candidates were generated using global log-uniform sampling and local sampling around the best admissible basins.

A particle swarm search was then performed, and the best candidates were refined using constrained local optimization with \texttt{fmincon}. Final refinements were carried out using both SQP and interior-point algorithms, and the parameter set with the lowest value of \(\mathcal{J}(z,\mu)\) was selected.

Before applying the rational correction, the raw approximation \(S_1(y_\mu(t))\) was examined. Figure~\ref{fig:S1-raw-mu} shows that this first-order approximation is smooth and admissible over the data interval. This confirms that the \(\mu\)-scaled transform provides a stable low-order representation before the Chebyshev--Pad\'e reconstruction is applied.

\begin{figure}[H]
    \centering
    \includegraphics[
        width=0.85\textwidth,
        height=7cm,
        keepaspectratio
    ]{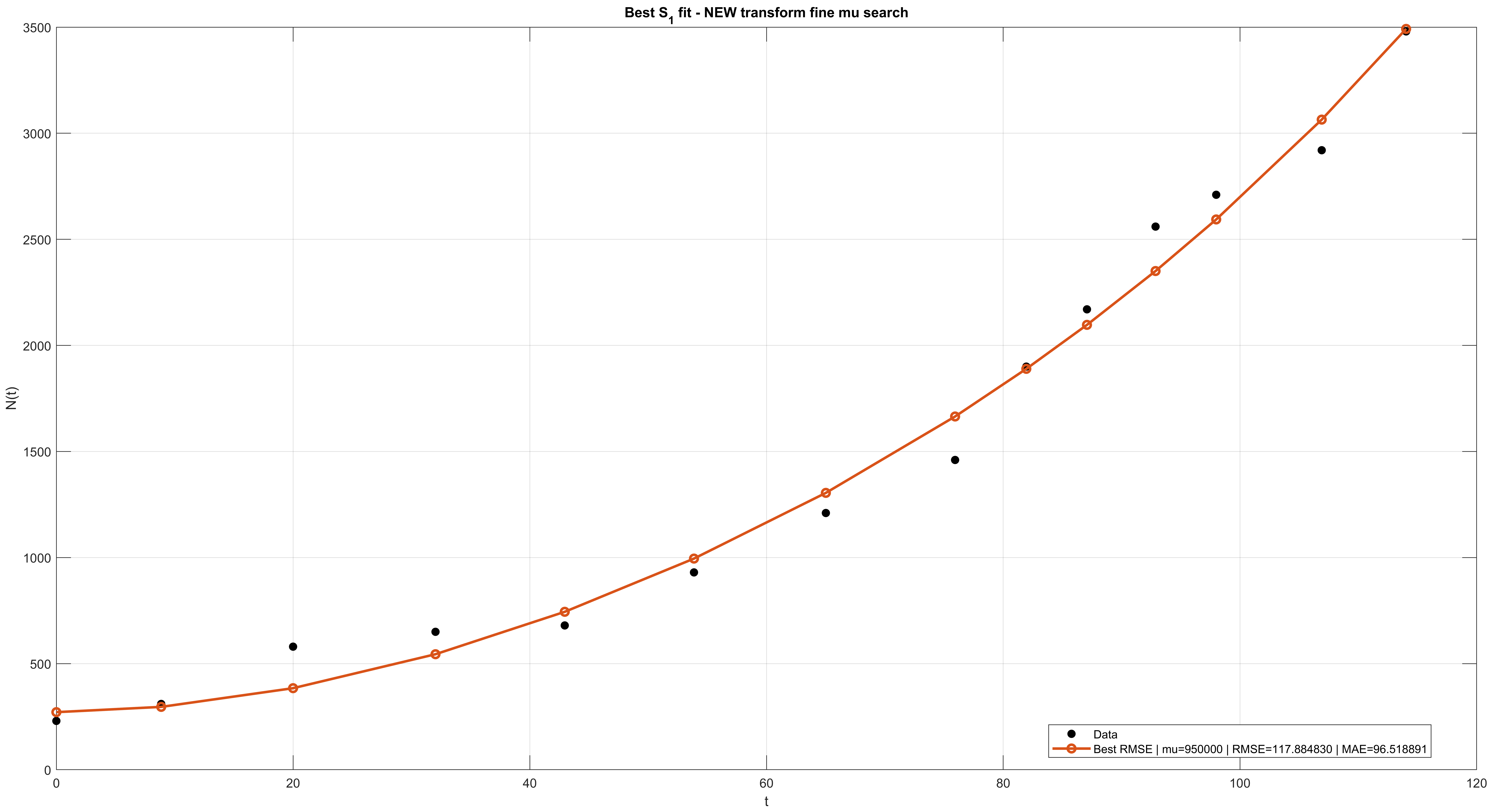}
    \caption{First-order approximation \(S_1(y_\mu(t))\) before the Chebyshev--Pad\'e reconstruction 
(RMSE = 117.88, MAE = 96.52, MAX = 209.43).}
    \label{fig:S1-raw-mu}
\end{figure}

After the transformation was fixed, Maple 2026 was used to construct a Chebyshev--Pad\'e approximation directly from the analytical expression
\[
S_1(y)=N_0(y)+N_1(y).
\]
The approximation was built in the transformed variable \(y=y_\mu(t)\), not in the original time variable \(t\). Maple's \texttt{chebpade} command was applied on an interval \([0,y_{\max}]\). The transformed data range was
\[
y_\mu(114.039)\approx 3.95690,
\]
and a slight refinement of the representation endpoint was tested. The best result was obtained with
\[
y_{\max}^{\mathrm{Cheb}}\approx 3.98440,
\qquad
[L/M]=[1/2].
\]
This adjustment changes only the interval where the Chebyshev--Pad\'e approximation is constructed; it does not refit the data and does not change the optimized transformation parameters.

The corresponding rational approximation obtained by Maple is reported below, with coefficients rounded for readability:
\[
\begin{aligned}
R_{1,2}^{(\mu)}(y)
&=
-\frac{-595.942421402264+41.707140894893\,y}
{2.346738697429-1.030322069114\,y+0.118567881527\,y^2}.
\end{aligned}
\]

The resulting approximation is evaluated as \(R_{1,2}^{(\mu)}(y_\mu(t))\), giving a rational semi-analytical representation in the original time variable. The numerical comparison in Figure~\ref{fig:real-chebpade-mu-ode45-rk4} and Table~\ref{Tmu} shows that this compact representation remains competitive with the numerical references and with the non-scaled generalized transform. The value of the framework lies in preserving an explicit low-order transform-domain structure while maintaining admissibility over the full data interval.

\begin{figure}[H]
    \centering
    \includegraphics[
        width=0.95\textwidth,
        height=8cm,
        keepaspectratio
    ]{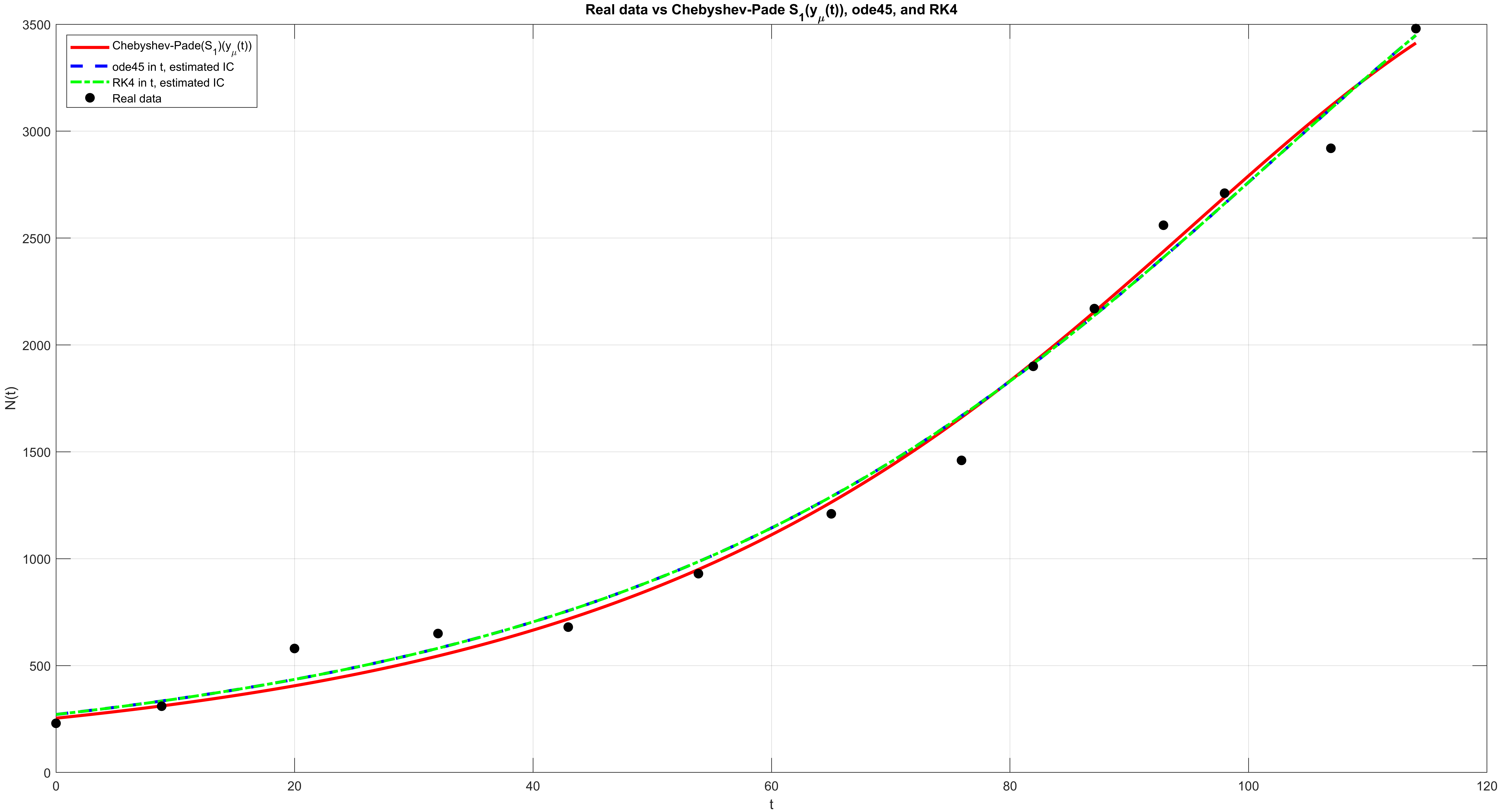}
    \caption{Real data versus the Chebyshev--Pad\'e approximation obtained from \(S_1(y_\mu(t))\), \texttt{ode45}, and RK4.}
    \label{fig:real-chebpade-mu-ode45-rk4}
\end{figure}

\begin{table}[H]
\centering
\caption{Error comparison for the Chebyshev--Pad\'e approximation obtained from \(S_1(y_\mu(t))\).}
\label{Tmu}
\begin{tabular}{|c|c|c|c|}
\hline
\textbf{Method} & \textbf{RMSE} & \textbf{MAE} & \textbf{MAX} \\
\hline
\(R_{1,2}^{CC_{T,\mu}^{\alpha}}(y_\mu(t))\)
& 102.47
& 75.51
& 201.22 \\
\hline
\(R_{1,2}^{\mathcal{L}_{T}^{\alpha}}(x(t))\)
& 102.51
& 75.03
& 202.26 \\
\hline
\(\texttt{ode45}\ \text{estIC}\)
& 102.94
& 82.91
& 208.38 \\
\hline
\(\text{RK4}\ \text{estIC}\)
& 102.94
& 82.91
& 208.38 \\
\hline
\end{tabular}
\end{table}

\section{Inverse Operator for the $\mu$-Scaled Generalized Transform}
\label{sec:inverse_mu}

\begin{Theorem}\label{t:inverse_mu}
Let \(f:[0,\infty)\to \RR\) be a function such that
\(CC_{T,\mu}^{\alpha}[f](s,\mu)\) exists for some \(s\), with \(0<\alpha\leq 1\) and \(0<\mu<\infty\). Then
\[
f(t)
=
\frac{1}{2\pi i\,\mu}
\lim_{R\to\infty}
\int_{a-iR}^{a+iR}
e^{s y_\mu(t)}
CC_{T,\mu}^{\alpha}[f](s,\mu)\,ds,
\]
where
\[
y_\mu(t)=\mu x\left(\frac{t}{\mu}\right),
\qquad
x(t)=\int_0^t \frac{d\omega}{T(\omega,\alpha)}.
\]
The integration is performed along the vertical line
\(\{\Re s=a\}\), whenever this line is contained in the region of
convergence of \(CC_{T,\mu}^{\alpha}[f](s,\mu)\).
\end{Theorem}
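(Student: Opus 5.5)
The plan is to reduce the statement to the classical Bromwich inversion formula through Theorem~\ref{t:mu_laplace_relation}. Since $T$ is positive and continuous, $y_\mu(t)=\mu x(t/\mu)$ is strictly increasing with $y_\mu(0)=0$. The divergence assumption $\int_0^\infty d\omega/T(\omega,\alpha)=\infty$ gives $y_\mu(t)\to\infty$. Hence $y_\mu$ is a homeomorphism of $[0,\infty)$ onto itself, with inverse $u_\mu$, and $g_\mu(r)=f(u_\mu(r))$ is well defined on $[0,\infty)$.

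First, Theorem~\ref{t:mu_laplace_relation} gives
\[
CC_{T,\mu}^{\alpha}[f](s,\mu)=\mu L[g_\mu](s)
\]
for every $s$ in the region of convergence. Substituting this into the right-hand side of the claimed formula, the factor $\mu$ cancels against $1/\mu$. The right-hand side becomes
\[
\frac{1}{2\pi i}\lim_{R\to\infty}\int_{a-iR}^{a+iR}e^{s r}L[g_\mu](s)\,ds,\qquad r=y_\mu(t),
\]
which is exactly the classical Bromwich (Fourier--Mellin) inversion integral for $g_\mu$ evaluated at $r=y_\mu(t)$.

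Second, I will check that the line $\Re s=a$ lies in the region of absolute convergence of $L[g_\mu]$. The substitution $r=y_\mu(t)$, $dr=dt/T(t/\mu,\alpha)$, used in the proof of Theorem~\ref{t:mu_laplace_relation}, identifies $\mu\int_0^\infty |e^{-s\mu x(t/\mu)}f(t)|\,dt/T(t/\mu,\alpha)$ with $\mu\int_0^\infty e^{-\Re(s) r}|g_\mu(r)|\,dr$. So the two regions of convergence coincide. I can then invoke the classical inversion theorem: the integral equals $g_\mu(r)$ at every point where $g_\mu$ is continuous, or more generally where it is of locally bounded variation, in which case one gets the average of the one-sided limits. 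Evaluating at $r=y_\mu(t)$ gives $g_\mu(y_\mu(t))=f(u_\mu(y_\mu(t)))=f(t)$. This mirrors the argument of Corollary~\ref{invrelat}.

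The main obstacle is the regularity needed for pointwise Bromwich inversion. Mere existence of the transform only yields $f$ almost everywhere, or in an $L^2$ sense. I will handle this by stating explicitly the implicit assumption that $f$ is continuous (or piecewise smooth) at $t$. Because $y_\mu$ and $u_\mu$ are continuous and strictly monotone, continuity or local bounded variation of $f$ near $t$ transfers to $g_\mu$ near $y_\mu(t)$. At jump points the formula returns the averaged value, consistent with the classical theory, and otherwise equality holds as stated.
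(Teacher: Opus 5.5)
Your proposal is correct and follows the paper's own proof: substitute $r=y_\mu(t)$ to obtain $CC_{T,\mu}^{\alpha}[f](s,\mu)=\mu L[f\circ u_\mu](s)$, apply the classical Bromwich inversion formula to $f\circ u_\mu$, and evaluate at $r=y_\mu(t)$. Your explicit regularity hypothesis (continuity or local bounded variation of $f$ at $t$) is a genuine improvement, since the paper omits it; this caveat also matters at $t=0$, where $y_\mu(0)=0$ and the Bromwich integral returns $f(0^+)/2$ rather than $f(0)$.
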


The proof is given in Appendix~\ref{app:proof_inverse_mu}.

\section{Residue Inversion Formulas for the Generalized Transforms}
\label{sec:residue_formulas}

\begin{Theorem}\label{t:residue_generalized_transform}
Let \(F(s)=\mathcal{L}_{T}^{\alpha}[f](s)\), and suppose that \(F\) admits a meromorphic continuation to the region enclosed by the Bromwich contour, with isolated poles \(s_1,\ldots,s_n\), and that the integral over the closing arc vanishes as the radius tends to infinity. Then
\[
f(t)
=
\frac{1}{2\pi i}
\lim_{R\to\infty}
\int_{a-iR}^{a+iR}
E_{\alpha}(s,t)F(s)\,ds
=
\sum_{k=1}^{n}
\operatorname{Res}
\left(
E_{\alpha}(s,t)F(s),s_k
\right),
\]
where the vertical line \(\Re(s)=a\) is chosen in the region of convergence of \(F\), and
\[
E_{\alpha}(s,t)
=
\exp\left(
s\int_0^t\frac{d\omega}{T(\omega,\alpha)}
\right).
\]
\end{Theorem}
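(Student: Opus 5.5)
The plan is to reduce Theorem~\ref{t:residue_generalized_transform} to the classical Bromwich inversion and residue theorem through the change of variables of Theorem~\ref{t:cv}. Put $x(t)=\int_0^t d\omega/T(\omega,\alpha)$ and let $u$ be its inverse. By Theorem~\ref{t:cv},
\[
F(s)=\mathcal{L}_T^\alpha[f](s)=L[g](s), \qquad g(x)=f(u(x)).
\]
This identity holds on the region of convergence, so the vertical line $\Re(s)=a$ lies in the region of convergence of the classical transform $L[g]$. By construction,
\[
E_\alpha(s,t)=e^{s\,x(t)}.
\]
The Bromwich integral in the statement is therefore exactly the classical Bromwich integral $\frac{1}{2\pi i}\lim_{R\to\infty}\int_{a-iR}^{a+iR}e^{sX}L[g](s)\,ds$, evaluated at the point $X=x(t)$.

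For the first equality I will apply the classical Laplace inversion theorem to $g$. This requires the implicit hypothesis that $g$ is locally of bounded variation or piecewise smooth and that $t$ is a point of continuity; I will state this as part of the standing assumptions. Classical inversion then gives $g(X)$ for the Bromwich integral. Setting $X=x(t)$ yields $g(x(t))=f(u(x(t)))=f(t)$, exactly as in the proof of Corollary~\ref{invL}. Since $T>0$, the map $x$ is strictly increasing. The two integrability conditions imposed on $T$ ensure that $x$ maps $[0,\infty)$ onto $[0,\infty)$, so $u$ is well defined.

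For the second equality I fix $t$ and close the segment $[a-iR,a+iR]$ with a left arc $C_R$. This gives a closed contour $\Gamma_R$ whose interior, for $R$ large, contains all the poles $s_1,\dots,s_n$ of the meromorphic continuation. The function $e^{s x(t)}$ is entire in $s$, so the integrand $E_\alpha(s,t)F(s)$ is meromorphic inside $\Gamma_R$ with poles only among the $s_k$. The residue theorem then gives $\oint_{\Gamma_R}=2\pi i\sum_k \operatorname{Res}(E_\alpha(s,t)F(s),s_k)$. Letting $R\to\infty$ and using the hypothesis that $\int_{C_R}\to0$ produces the residue sum.

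The main obstacle is justifying the vanishing of the arc integral, since the theorem takes it as a hypothesis. I will only remark that it follows from a Jordan-lemma estimate whenever $|F(s)|\to0$ uniformly on $C_R$. This works because $x(t)\ge0$ makes $|e^{s x(t)}|=e^{\Re(s)x(t)}$ bounded on the left half-plane portion of the arc. A second subtlety is that the number of poles must be finite for the sum to be finite, which the statement also assumes. With these points granted, the proof is just the classical argument transported along $x(t)$.
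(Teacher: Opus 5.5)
Your proposal is correct and follows essentially the same route as the paper. Both identify $F=\mathcal{L}_T^\alpha[f]=L[g]$ with $g=f\circ u$, apply classical Bromwich inversion and the residue theorem to $g$ at the point $x(t)$, and then use $e^{s x(t)}=E_\alpha(s,t)$ and $g(x(t))=f(t)$; your explicit regularity assumptions for classical inversion (bounded variation, continuity at the evaluation point) and your check that $x$ is a bijection of $[0,\infty)$ onto itself simply make precise what the paper's proof leaves implicit.
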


The proof is given in Appendix~\ref{app:proof_residue_generalized_transform}.

\begin{Theorem}\label{t:residue_mu_transform}
Let \(F(s,\mu)=CC_{T,\mu}^{\alpha}[f](s,\mu)\), with \(0<\mu<\infty\), and suppose that \(F(\cdot,\mu)\) admits a meromorphic continuation to the region enclosed by the Bromwich contour, with isolated poles \(s_1,\ldots,s_n\), and that the integral over the closing arc vanishes as the radius tends to infinity. Then
\[
f(t)
=
\frac{1}{2\pi i\,\mu}
\lim_{R\to\infty}
\int_{a-iR}^{a+iR}
e^{s y_\mu(t)}F(s,\mu)\,ds
=
\frac{1}{\mu}
\sum_{k=1}^{n}
\operatorname{Res}
\left(
e^{s y_\mu(t)}F(s,\mu),s_k
\right),
\]
where the vertical line \(\Re(s)=a\) is chosen in the region of convergence of \(F(\cdot,\mu)\), and
\[
y_\mu(t)=\mu x\left(\frac{t}{\mu}\right),
\qquad
x(t)=\int_0^t\frac{d\omega}{T(\omega,\alpha)}.
\]
\end{Theorem}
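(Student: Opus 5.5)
The first equality is exactly Theorem~\ref{t:inverse_mu}, applied to $f$ with $F(s,\mu)=CC_{T,\mu}^{\alpha}[f](s,\mu)$ on a vertical line $\Re(s)=a$ inside the region of convergence. So only the second equality, the evaluation of the Bromwich integral by residues, needs an argument. For fixed $t$ and $\mu$ we set $y=y_\mu(t)=\mu x(t/\mu)\ge 0$. The function $s\mapsto e^{sy}$ is entire, so $G(s):=e^{sy}F(s,\mu)$ is meromorphic in the closed region with the same poles as $F(\cdot,\mu)$. Because $e^{sy}$ has no zeros, it cannot cancel any of them; only the residues may change.

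Next we close the contour. For $R$ large we take the segment $\{a+i\tau: |\tau|\le R\}$ together with the left arc $C_R$. This arc is centered at $a$ and has radius $R$, so it joins $a+iR$ to $a-iR$ through $\Re s<a$. Once $R$ is large, all poles $s_1,\dots,s_n$ lie inside the closed curve, since there are finitely many and they lie to the left of the line of convergence. The residue theorem then gives
\[
\int_{a-iR}^{a+iR} G(s)\,ds+\int_{C_R}G(s)\,ds = 2\pi i\sum_{k=1}^n \operatorname{Res}(G,s_k).
\]
By hypothesis, $\int_{C_R}G\,ds\to 0$ as $R\to\infty$. This is the step where one would normally invoke a Jordan-type lemma, using $y\ge 0$ and a decay assumption such as $|F(s,\mu)|\to0$ uniformly on $C_R$. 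Here it is assumed, so we only need to note that it matches the stated hypothesis.

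Letting $R\to\infty$, dividing by $2\pi i\mu$, and combining with Theorem~\ref{t:inverse_mu} gives $f(t)=\mu^{-1}\sum_k \operatorname{Res}(e^{sy_\mu(t)}F(s,\mu),s_k)$. As a consistency check, we can cross-validate with Theorem~\ref{t:residue_generalized_transform} through Theorem~\ref{t:mu_laplace_relation}: since $F(s,\mu)=\mu L[g_\mu](s)$, the formula reduces to the classical Laplace residue inversion for $g_\mu$ evaluated at $r=y_\mu(t)$, and then $f(t)=g_\mu(y_\mu(t))$.

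The only delicate point is making sure that the Bromwich line and the closing arc are compatible with the meromorphic continuation. That is, the poles must all lie in $\Re s<a$, and $F$ must have no other singularities, such as branch cuts, inside the region. We handle this by reading "the region enclosed by the Bromwich contour" as containing only the listed isolated poles, which is what the hypotheses provide.
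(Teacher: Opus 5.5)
Your proposal is correct and is essentially the paper's argument. The paper writes $F(s,\mu)=\mu L[g_\mu](s)$ with $g_\mu=f\circ u_\mu$, invokes the classical Bromwich/residue inversion for $g_\mu$, and then substitutes $r=y_\mu(t)$, which is exactly your ``consistency check''; your main line differs only in spelling out the contour-closing residue step of that classical inversion directly for $e^{s y_\mu(t)}F(s,\mu)$ with $y_\mu(t)$ fixed, while relying, like the paper, on Theorem~\ref{t:inverse_mu} (equivalently, classical Laplace inversion) for the first equality and on the assumed vanishing of the arc integral for the second.
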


The proof is given in Appendix~\ref{app:proof_residue_mu_transform}.

\section{Admissibility and Fixed-Point Consistency of the First-Order Approximation}
\label{sec:error_admissibility}

The purpose of this section is threefold. First, we define an admissible parameter region for the transformed tumor-growth model and state convergence hypotheses for the transform--Adomian components and Adomian polynomials. Second, we give conditions under which the resulting series solves the transformed equation and prove the existence of optimal parameters on compact admissible sets. Third, we justify the first truncation $S_1$ through a fixed-point formulation and derive the corresponding interior optimality conditions for the transform-parameter fitting problem.

Convergence of Adomian-type decompositions has been studied under suitable
assumptions for nonlinear differential and integral equations \cite{cherruault1993,abbaoui1994,abdelrazec2011}.
The operational properties of the generalized Laplace transform used in this work,
including its relation with the classical Laplace transform, are given in \cite{bosch2021}.

We use a unified notation for the two transformed variables used in this paper. For
the generalized transform, set
\[
r_z(t)=x_z(t),
\]
where
\[
x_z(t)
=
 \sigma^{\alpha-1}
\frac{(1+c(1-\alpha)t)^{1-p}-1}{c(1-\alpha)(1-p)}.
\]
For the \(\mu\)-scaled generalized transform, \(\mu>0\) is fixed in advance and we set
\[
r_z(t)=y_{\mu,z}(t)=\mu x_z(t/\mu),
\]
that is,
\[
y_{\mu,z}(t)
=
\mu  \sigma^{\alpha-1}
\frac{\left(1+c(1-\alpha)\frac{t}{\mu}\right)^{1-p}-1}
{c(1-\alpha)(1-p)}.
\]
Throughout this section, \(r_z(t)\) denotes either \(x_z(t)\) or \(y_{\mu,z}(t)\), according
to the transform under consideration. In both cases, $z=(\alpha,c, \sigma,p)$.

We restrict the analysis to the parametric regime used in the optimization:
\[
0<\alpha<1,\qquad  \sigma>0,\qquad c\neq 0,\qquad p\neq 1,
\]
and, in the \(\mu\)-scaled case, \(\mu>0\) is assumed fixed. We also assume
\[
k_1>0,
\]
as in the logistic--Allee model considered in this work. Let
\[
h_1=\eta-\frac{k_4}{k_1},
\qquad
h_2=\frac{k_4}{k_1},
\qquad
h_3=\frac{k_4}{k_1}+d.
\]
In what follows, \(N_0\) and \(N_1\) are regarded as functions of the transformed variable
\[
\rho=r_z(t).
\]
Thus \(N_j(r_z(t))\) denotes the composition of the \(j\)-th Adomian component with the
corresponding admissible transformed variable.

\begin{Theorem}
\label{thm:admissibility_finiteness_S1}
Let \(I=[0,t_f]\), with \(t_f<\infty\). Assume that the transformed variable \(r_z\) is
well defined on \(I\). More precisely, assume that
\[
1+c(1-\alpha)t>0,\qquad t\in I,
\]
in the generalized-transform case, or
\[
1+c(1-\alpha)\frac{t}{\mu}>0,\qquad t\in I,
\]
in the \(\mu\)-scaled case. Assume also that
\[
h_1+h_3e^{-k_1r_z(t)}>0,\qquad t\in I.
\]
Then
\[
N_0(r_z(t))=h_1e^{k_1r_z(t)}+h_2
\]
and \(N_1(r_z(t))\) are real-valued, continuous, and bounded on \(I\). Consequently,
\[
S_1(r_z(t))=N_0(r_z(t))+N_1(r_z(t))
\]
is real-valued, continuous, and bounded on \(I\).

Moreover, if the data set
\[
\{(t_i,N_i^{\mathrm{data}})\}_{i=1}^{m_d}
\]
is finite, with \(t_i\in I\) and \(N_i^{\mathrm{data}}\) finite, then
\[
J(z)
=
\left[
\frac{1}{m_d}
\sum_{i=1}^{m_d}
\left|S_1(r_z(t_i))-N_i^{\mathrm{data}}\right|^2
\right]^{1/2}
<\infty.
\]
\end{Theorem}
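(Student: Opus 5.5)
The plan is to reduce everything to continuity of elementary functions on the compact interval \(I\), composed with the continuous map \(t\mapsto r_z(t)\). I would first show that \(r_z\) is real-valued and continuous on \(I\). In the generalized-transform case, the hypothesis \(1+c(1-\alpha)t>0\) on \(I\) makes \(t\mapsto (1+c(1-\alpha)t)^{1-p}\) a real power of a positive continuous function, hence continuous. Since \(c\neq0\), \(0<\alpha<1\), \(p\neq1\) and \(\sigma>0\), the normalizing constant \(\sigma^{\alpha-1}/(c(1-\alpha)(1-p))\) is finite and nonzero, so \(x_z\) is continuous on \(I\). The \(\mu\)-scaled case is identical after replacing \(t\) by \(t/\mu\) and multiplying by \(\mu\). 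By compactness, \(r_z(I)\subset[\rho_-,\rho_+]\) for some finite \(\rho_\pm\).

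Next I would write \(N_0\) and \(N_1\) explicitly as functions of \(\rho=r_z(t)\), using the formulas of Section~\ref{sec:method} and the \(\mu\)-scaled formulas with \(E_\alpha(\mu k_1,t/\mu)=e^{k_1 r_z(t)}\). This gives \(N_0=h_1e^{k_1\rho}+h_2\), which is clearly continuous. For \(N_1\), I would use the form \(N_1=e^{k_1\rho}[k_2I_1-k_3I_2]\). The term \(I_2\) is a combination of \(e^{\pm k_1\rho}\) and \(\rho\) itself (note that \(\mu x(t/\mu)=r_z\)), so it is continuous. In \(I_1\), the non-logarithmic part is continuous because \(k_1h_3\neq0\); for this I note that \(k_1>0\) and \(h_3=k_4/k_1+d>0\), since \(k_4>0\) and \(d>0\).

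The only delicate step is the logarithm. Its argument can be rewritten as
\[
\frac{h_1e^{k_1\rho}+h_3}{(h_1+h_3)e^{k_1\rho}}=\frac{h_1+h_3e^{-k_1\rho}}{h_1+h_3},
\]
which reconciles the two displayed versions of the formula. I must check that this quantity is strictly positive on \(I\). The hypothesis gives \(h_1+h_3e^{-k_1 r_z(t)}>0\). For the denominator, \(h_1+h_3=\eta+d>0\), which also equals the numerator evaluated at \(t=0\), where \(r_z(0)=0\). So the argument is a positive continuous function on \(I\), and its logarithm is real and continuous.

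Continuity on the compact set \(I\) then gives boundedness by the extreme value theorem, and the sum \(S_1\) inherits both properties. Finally, \(J(z)\) is a finite sum of squares of finite real numbers \(S_1(r_z(t_i))-N_i^{\mathrm{data}}\), divided by \(m_d\), followed by a square root, so \(J(z)<\infty\).

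The main obstacle I expect is the bookkeeping that identifies the paper's \(N_1\) formulas with functions of \(\rho=r_z(t)\) and confirms that the log argument equals the expression in the hypothesis. This depends on the sign facts \(h_3>0\) and \(\eta+d>0\), which I would state explicitly from the model parameters (\(\eta>0\), \(a,b,c,d>0\)).
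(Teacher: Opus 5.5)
Your proposal is correct and follows essentially the same route as the paper. Both arguments get continuity of \(r_z\) from positivity of \(1+c(1-\alpha)t\) (resp.\ \(1+c(1-\alpha)t/\mu\)), then continuity of the explicit \(N_0\) and \(N_1\), positivity of the logarithmic argument from the hypothesis together with \(h_1+h_3=\eta+d>0\), and finally boundedness on the compact interval and finiteness of \(J(z)\). Your explicit checks that \(k_1h_3\neq 0\) and that the two displayed forms of the logarithm's argument coincide are small clarifications the paper leaves implicit. The paper's extra observation that \(1/(N_0+d)\) is bounded is not needed for the stated conclusion.
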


\begin{proof}
In the generalized-transform case, the condition
\[
1+c(1-\alpha)t>0,\qquad t\in I,
\]
implies that \(T(t,\alpha;z)\) is real-valued, positive, and finite on \(I\). Hence
\(1/T(t,\alpha;z)\) is continuous on \(I\), and \(x_z(t)\) is finite and continuous on \(I\).

In the \(\mu\)-scaled case, the condition
\[
1+c(1-\alpha)\frac{t}{\mu}>0,\qquad t\in I,
\]
with fixed \(\mu>0\), implies that \(T(t/\mu,\alpha;z)\) is real-valued, positive, and
finite on \(I\). Hence \(y_{\mu,z}(t)=\mu x_z(t/\mu)\) is finite and continuous on \(I\).

Thus, in both cases, \(r_z(t)\) is finite and continuous on \(I\). Therefore
\(e^{k_1r_z(t)}\) is finite and continuous on \(I\), and
\[
N_0(r_z(t))=h_1e^{k_1r_z(t)}+h_2
\]
is finite and continuous on \(I\). Moreover,
\[
N_0(r_z(t))+d=h_1e^{k_1r_z(t)}+h_3.
\]
Since
\[
h_1+h_3e^{-k_1r_z(t)}>0
\]
and \(e^{k_1r_z(t)}>0\), it follows that
\[
h_1e^{k_1r_z(t)}+h_3>0.
\]
Consequently,
\[
N_0(r_z(t))+d>0,\qquad t\in I.
\]
Since \(N_0(r_z(t))+d\) is continuous and positive on \(I\), there exists
\[
m_z=\min_{t\in I}\bigl(N_0(r_z(t))+d\bigr)>0.
\]
Thus
\[
\frac{1}{N_0(r_z(t))+d}
\]
is real-valued, continuous, and bounded on \(I\).

The condition
\[
h_1+h_3e^{-k_1r_z(t)}>0
\]
also guarantees that the logarithmic contribution appearing in \(N_1(r_z(t))\) is
real-valued, finite, and continuous on \(I\). Moreover,
\[
h_1+h_3=\eta+d>0,
\]
so the denominator appearing in the logarithmic term is positive. Therefore the above
admissibility condition is sufficient to ensure that the logarithmic term is real-valued.

The remaining terms in \(N_1(r_z(t))\) are finite sums and products of continuous
functions involving \(r_z(t)\), exponentials, and polynomial factors. Therefore
\[
N_1(r_z(\cdot))\in C(I).
\]
Since \(I\) is compact,
\[
\|N_0(r_z(\cdot))\|_{\infty,I}<\infty,
\qquad
\|N_1(r_z(\cdot))\|_{\infty,I}<\infty.
\]
Therefore,
\[
S_1(r_z(\cdot))\in C(I),
\qquad
\|S_1(r_z(\cdot))\|_{\infty,I}<\infty.
\]
Since the data set is finite and each \(N_i^{\mathrm{data}}\) is finite, one has
\[
\left|S_1(r_z(t_i))-N_i^{\mathrm{data}}\right|<\infty,
\qquad i=1,\ldots,m_d.
\]
Hence \(J(z)<\infty\).
\end{proof}

\begin{Theorem}
\label{thm:admissible_minimizer}
Let \(K_{\mathrm{adm}}\) be a nonempty compact subset strictly contained in the admissible
parameter region, so that the admissibility inequalities in
Theorem~\ref{thm:admissibility_finiteness_S1} hold on \(I\) for every
\(z\in K_{\mathrm{adm}}\). In the \(\mu\)-scaled case, \(\mu>0\) is fixed. Assume that, for
each \(i=1,\ldots,m_d\), the map
\[
z\mapsto S_1(r_z(t_i))
\]
is continuous on \(K_{\mathrm{adm}}\). Then there exists \(z^*\in K_{\mathrm{adm}}\) such that
\[
J(z^*)=\min_{z\in K_{\mathrm{adm}}}J(z).
\]
Moreover,
\[
S_1(r_{z^*}(\cdot))\in C(I),
\qquad
\|S_1(r_{z^*}(\cdot))\|_{\infty,I}<\infty,
\qquad
J(z^*)<\infty.
\]
\end{Theorem}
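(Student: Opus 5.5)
The plan is a Weierstrass extreme-value argument on the compact set \(K_{\mathrm{adm}}\), with Theorem~\ref{thm:admissibility_finiteness_S1} used to control the minimizer. First, observe that since \(K_{\mathrm{adm}}\) lies inside the admissible region, Theorem~\ref{thm:admissibility_finiteness_S1} applies at every \(z\in K_{\mathrm{adm}}\). Hence each value \(S_1(r_z(t_i))\) is a finite real number, and \(J\) is a well-defined map \(K_{\mathrm{adm}}\to[0,\infty)\).

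Next, establish continuity of \(J\) on \(K_{\mathrm{adm}}\). By hypothesis, each map \(z\mapsto S_1(r_z(t_i))\) is continuous. Subtracting the constant \(N_i^{\mathrm{data}}\) and squaring preserves continuity. A finite sum of the \(m_d\) resulting functions, multiplied by \(1/m_d\), is still continuous. Finally, composing with the continuous map \(u\mapsto u^{1/2}\) on \([0,\infty)\) shows that \(J\) is continuous on \(K_{\mathrm{adm}}\).

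The main step is then immediate. \(K_{\mathrm{adm}}\) is nonempty and compact, and \(J\) is a continuous real-valued function on it. By the extreme value theorem, \(J\) attains its infimum at some \(z^*\in K_{\mathrm{adm}}\), so \(J(z^*)=\min_{z\in K_{\mathrm{adm}}}J(z)\). Compactness is used only here. In a metric parameter space one can argue concretely: take a minimizing sequence, extract a convergent subsequence whose limit stays in \(K_{\mathrm{adm}}\) because the set is closed, and pass to the limit using continuity.

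For the remaining assertions, note that \(z^*\in K_{\mathrm{adm}}\) satisfies the admissibility inequalities on \(I\). Applying Theorem~\ref{thm:admissibility_finiteness_S1} once more, now at \(z=z^*\), gives \(S_1(r_{z^*}(\cdot))\in C(I)\), the bound \(\|S_1(r_{z^*}(\cdot))\|_{\infty,I}<\infty\), and \(J(z^*)<\infty\).

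The only delicate point is that continuity of \(J\) rests entirely on the assumed pointwise continuity in \(z\). If one wanted to derive that continuity instead of assuming it, the obstacle would be showing that \(r_z(t_i)\) depends continuously on \(z\), which holds because \(c\neq0\), \(p\neq1\) and \(1+c(1-\alpha)t>0\) on the compact set. One would also need the logarithmic argument in \(N_1\) to stay uniformly positive. Since the statement takes this continuity as a hypothesis, no such argument is needed and the proof reduces to the two steps above.
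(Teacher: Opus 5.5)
Your proposal is correct and follows essentially the same route as the paper. It gets finiteness of \(J\) on \(K_{\mathrm{adm}}\) from Theorem~\ref{thm:admissibility_finiteness_S1}, builds continuity of \(J\) from the assumed continuity of \(z\mapsto S_1(r_z(t_i))\) through squaring, summing and the square root, applies the Weierstrass extreme value theorem on the nonempty compact set, and then applies Theorem~\ref{thm:admissibility_finiteness_S1} at \(z^*\) for the remaining assertions.
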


\begin{proof}
By Theorem~\ref{thm:admissibility_finiteness_S1},
\[
J(z)<\infty,\qquad z\in K_{\mathrm{adm}}.
\]
For each \(i=1,\ldots,m_d\), the function
\[
z\mapsto
\left|S_1(r_z(t_i))-N_i^{\mathrm{data}}\right|^2
\]
is continuous on \(K_{\mathrm{adm}}\). Hence
\[
J^2(z)
=
\frac{1}{m_d}
\sum_{i=1}^{m_d}
\left|S_1(r_z(t_i))-N_i^{\mathrm{data}}\right|^2
\]
is continuous on \(K_{\mathrm{adm}}\). Since \(J(z)=\sqrt{J^2(z)}\), it follows that
\[
J\in C(K_{\mathrm{adm}}).
\]
Since \(K_{\rm adm}\) is nonempty and compact, and since \(J\in C(K_{\rm adm})\), the existence of a minimizer follows directly from the Weierstrass Extreme Value Theorem. Hence there exists an optimal parameter vector
\[
z^*\in K_{\rm adm}
\]
such that
\[
J(z^*)=\min_{z\in K_{\rm adm}}J(z).
\]
The remaining assertions follow from Theorem~\ref{thm:admissibility_finiteness_S1}
with \(z=z^*\).
\end{proof}

For an admissible transformed variable \(\rho=r_z(t)\), the transformed form of the
logistic--Allee equation is
\[
\frac{dN}{d\rho}
=
k_1N+\frac{k_2}{N+d}-k_3N^2-k_4,
\qquad
N(0)=\eta.
\]
The function
\[
N_0(\rho)=h_1e^{k_1\rho}+h_2
\]
satisfies
\[
N_0'(\rho)=k_1N_0(\rho)-k_4,
\qquad
N_0(0)=\eta.
\]
Therefore, writing
\[
N(\rho)=N_0(\rho)+U(\rho),
\]
the correction \(U\) satisfies the transformed Volterra equation
\[
U(\rho)
=
\int_0^\rho
e^{k_1(\rho-\xi)}
\left[
\frac{k_2}{N_0(\xi)+U(\xi)+d}
-
k_3\bigl(N_0(\xi)+U(\xi)\bigr)^2
\right]\,d\xi.
\]

\begin{Theorem}
\label{thm:fixed_point_convergence_transform_adomian}
Let \(r_z(t)\) denote either \(x_z(t)\) or \(y_{\mu,z}(t)\). Under the admissibility
assumptions and the standing parameter restrictions, \(T>0\) on the relevant interval.
Hence the transformed variable \(r_z\) is strictly increasing on \(I\), with \(r_z(0)=0\).
Define
\[
R_z=[0,r_z(t_f)].
\]
Let
\[
N_0(\rho)=h_1e^{k_1\rho}+h_2,
\qquad \rho\in R_z.
\]
Let \(C(R_z)\) be endowed with the norm
\[
\|U\|_{\infty,R_z}=\max_{\rho\in R_z}|U(\rho)|.
\]
For \(R>0\), define
\[
B_R=\{U\in C(R_z):\|U\|_{\infty,R_z}\le R\}.
\]
Assume that
\[
m_z(R)=\min_{\rho\in R_z}\bigl(N_0(\rho)+d\bigr)-R>0
\]
and set
\[
M_z(R)=\max_{\rho\in R_z}|N_0(\rho)|+R.
\]
Define
\[
\mathcal A_z(U)(\rho)
=
\int_0^\rho
e^{k_1(\rho-\xi)}
\left[
\frac{k_2}{N_0(\xi)+U(\xi)+d}
-
k_3\bigl(N_0(\xi)+U(\xi)\bigr)^2
\right]\,d\xi.
\]
Set
\[
\Gamma_z
=
\sup_{0\le \rho\le r_z(t_f)}
\int_0^\rho e^{k_1(\rho-\xi)}\,d\xi.
\]
Since \(k_1>0\), equivalently,
\[
\Gamma_z=\frac{e^{k_1r_z(t_f)}-1}{k_1}.
\]
Assume that
\[
\Gamma_z
\left(
\frac{|k_2|}{m_z(R)}
+
|k_3|M_z(R)^2
\right)
\le R,
\]
and
\[
\delta_z(R)
=
\Gamma_z
\left(
\frac{|k_2|}{m_z(R)^2}
+
2|k_3|M_z(R)
\right)
<1.
\]
Here \(\delta_z(R)\) is the contraction constant associated with \(\mathcal A_z\).

Then \(\mathcal A_z\) has a unique fixed point \(U^*\in B_R\). Consequently,
\[
N^*(\rho)=N_0(\rho)+U^*(\rho)
\]
satisfies the transformed Volterra integral equation associated with the logistic--Allee
model on \(R_z\). Moreover, \(N^*\) satisfies the transformed differential equation
\[
(N^*)'(\rho)
=
k_1N^*(\rho)
+
\frac{k_2}{N^*(\rho)+d}
-
k_3(N^*(\rho))^2
-
k_4,
\qquad
N^*(0)=\eta.
\]

Let \(\{\widetilde U_n\}_{n\ge 0}\) be the Picard sequence defined by
\[
\widetilde U_0=0,
\qquad
\widetilde U_{n+1}=\mathcal A_z(\widetilde U_n),
\qquad n\ge 0.
\]
Then
\[
\lim_{n\to\infty}
\|\widetilde U_n-U^*\|_{\infty,R_z}=0.
\]
Consequently,
\[
\lim_{n\to\infty}
\|N_0+\widetilde U_n-(N_0+U^*)\|_{\infty,R_z}=0.
\]
Equivalently,
\[
\lim_{n\to\infty}
\left\|
N_0(r_z(\cdot))+\widetilde U_n(r_z(\cdot))
-
\bigl[N_0(r_z(\cdot))+U^*(r_z(\cdot))\bigr]
\right\|_{\infty,I}
=0.
\]

If, in addition, the Adomian correction series
\[
U_A(\rho)=\sum_{n=1}^{\infty}N_n(\rho)
\]
converges uniformly on \(R_z\), its partial sums
\[
U_n=N_1+\cdots+N_n
\]
remain in \(B_R\), and the nonlinear series
\[
\frac{1}{N_0+U_A+d}
\qquad \text{and} \qquad
(N_0+U_A)^2
\]
are the uniform limits on \(R_z\) of the corresponding Adomian expansions, then
\[
U_A=U^*.
\]
Hence the Adomian series represents the unique admissible fixed point of the transformed
correction equation. In particular,
\[
S_1=N_0+N_1
\]
is the first nontrivial Adomian truncation associated with the admissible fixed-point
problem.
\end{Theorem}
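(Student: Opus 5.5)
The argument is a Banach fixed-point argument on the closed ball \(B_R\subset C(R_z)\), followed by a uniqueness-of-limits identification of the Adomian series.

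\textbf{Step 1: monotonicity of \(r_z\).} On \(I\), the admissibility inequality \(1+c(1-\alpha)t>0\) (or its \(\mu\)-scaled version) together with \(\sigma>0\) gives \(T=\sigma^{1-\alpha}(1+c(1-\alpha)t)^p>0\). Hence \(r_z'(t)=1/T>0\), respectively \(1/T(t/\mu,\alpha)>0\). So \(r_z\) is strictly increasing with \(r_z(0)=0\), \(R_z\) is a compact interval, and \(r_z:I\to R_z\) is a homeomorphism.

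\textbf{Step 2: \(\mathcal A_z\) is well defined and maps \(B_R\) into itself.} Take \(U\in B_R\). For \(\xi\in R_z\) we have
\[
N_0(\xi)+U(\xi)+d\ge m_z(R)>0 \quad\text{and}\quad |N_0+U|\le M_z(R).
\]
So the integrand is continuous and bounded by \(|k_2|/m_z(R)+|k_3|M_z(R)^2\). It follows that \(\mathcal A_z(U)\) is continuous in \(\rho\): write \(\mathcal A_z(U)(\rho)=e^{k_1\rho}\int_0^\rho e^{-k_1\xi}[\cdots]\,d\xi\). Multiplying the bound by \(\int_0^\rho e^{k_1(\rho-\xi)}d\xi\le\Gamma_z\) and using the first hypothesis gives \(\|\mathcal A_z U\|_\infty\le R\).

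\textbf{Step 3: contraction.} For \(U,V\in B_R\), use the mean value theorem pointwise in the form
\[
\Bigl|\tfrac1{a}-\tfrac1{b}\Bigr|\le\frac{|a-b|}{m_z(R)^2}, \qquad |A^2-B^2|=|A+B||A-B|\le 2M_z(R)|A-B|.
\]
This yields \(\|\mathcal A_zU-\mathcal A_zV\|_\infty\le\delta_z(R)\|U-V\|_\infty\) with \(\delta_z(R)<1\). Since \(B_R\) is closed in the Banach space \(C(R_z)\), the Banach fixed-point theorem gives a unique fixed point \(U^*\). It also gives convergence of the Picard iterates from \(\widetilde U_0=0\in B_R\), with the geometric rate \(\delta_z^n R\).

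The statements about \(N_0+\widetilde U_n\) are immediate. The version in the \(t\)-variable follows because composing with \(r_z\) preserves sup norms: \(r_z(I)=R_z\).

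\textbf{Step 4: \(N^*\) solves the transformed ODE.} The integrand \(F(\xi)\) is continuous, so \(U^*(\rho)=e^{k_1\rho}\int_0^\rho e^{-k_1\xi}F(\xi)\,d\xi\) is \(C^1\). Differentiating gives \(U^{*\prime}=k_1U^*+F\), and \(U^*(0)=0\). Adding \(N_0'=k_1N_0-k_4\) and \(N_0(0)=\eta\), then expanding \(F\) in terms of \(N^*\), gives the stated ODE and initial condition.

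\textbf{Step 5: identification of the Adomian series.} This is the step I expect to be the main obstacle, because it relies on the recursion's structure. By the transform recursion together with the inversion formula of Corollary~\ref{invL} (or Corollary~\ref{invrelat}) and the convolution formula, each component satisfies
\[
N_{n}(\rho)=\int_0^\rho e^{k_1(\rho-\xi)}\bigl[k_2A_{n-1}(\xi)-k_3B_{n-1}(\xi)\bigr]d\xi.
\]
This is exactly how \(N_1\) was computed. Summing for \(n=1,\dots,n_0\) and letting \(n_0\to\infty\), interchange the sum and integral using uniform convergence on the compact set \(R_z\). The assumed uniform convergence of \(\sum A_n\to 1/(N_0+U_A+d)\) and \(\sum B_n\to (N_0+U_A)^2\) then gives \(U_A=\mathcal A_z(U_A)\).

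Because the partial sums lie in the closed set \(B_R\), their uniform limit \(U_A\) also lies in \(B_R\). Uniqueness of the fixed point then forces \(U_A=U^*\).

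Finally, \(S_1=N_0+N_1\) is the first partial sum. Here \(N_1=\mathcal A_z(0)=\widetilde U_1\), since \(A_0=1/(N_0+d)\) and \(B_0=N_0^2\). So \(S_1\) is simultaneously the first nontrivial Adomian truncation and the first Picard iterate.
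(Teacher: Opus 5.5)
Your proposal is correct and follows the paper's proof essentially step by step: invariance of $B_R$, the contraction estimate, Banach's theorem with Picard convergence, differentiation of the Volterra equation to recover the ODE, and identification of the Adomian series by uniqueness of the fixed point. Your Step 5 is more explicit than the paper's Step F. You derive $\mathcal A_z(U_A)=U_A$ from the componentwise convolution recursion $N_n=e^{k_1\cdot}*\bigl(k_2A_{n-1}-k_3B_{n-1}\bigr)$ by interchanging sum and integral, and you verify $U_A\in B_R$ from the closedness of $B_R$, whereas the paper simply asserts that $U_A$ ``may be substituted'' into the correction equation.
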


\begin{proof}
\emph{Step A: Invariance of the admissible ball.}

Let \(U\in B_R\). Then
\[
\|U\|_{\infty,R_z}\le R.
\]
For every \(\xi\in R_z\), we have
\[
N_0(\xi)+U(\xi)+d
\ge
N_0(\xi)+d-|U(\xi)|
\ge
\min_{\rho\in R_z}\bigl(N_0(\rho)+d\bigr)-R
=
m_z(R)>0.
\]
Therefore,
\[
\left|
\frac{1}{N_0(\xi)+U(\xi)+d}
\right|
\le
\frac{1}{m_z(R)}.
\]
Moreover,
\[
|N_0(\xi)+U(\xi)|
\le
\max_{\rho\in R_z}|N_0(\rho)|+\|U\|_{\infty,R_z}
\le
M_z(R).
\]
Since \(U\in C(R_z)\) and \(N_0+U+d\ge m_z(R)>0\), the integrand defining
\(\mathcal A_z(U)\) is continuous on \(R_z\). Hence
\[
\mathcal A_z(U)\in C(R_z).
\]

For every \(\rho\in R_z\), we obtain
\[
|\mathcal A_z(U)(\rho)|
\le
\int_0^\rho
e^{k_1(\rho-\xi)}
\left[
\frac{|k_2|}{m_z(R)}
+
|k_3|M_z(R)^2
\right]\,d\xi.
\]
By the definition of \(\Gamma_z\),
\[
|\mathcal A_z(U)(\rho)|
\le
\Gamma_z
\left[
\frac{|k_2|}{m_z(R)}
+
|k_3|M_z(R)^2
\right].
\]
By the first assumed inequality,
\[
\Gamma_z
\left(
\frac{|k_2|}{m_z(R)}
+
|k_3|M_z(R)^2
\right)
\le R.
\]
Thus,
\[
\|\mathcal A_z(U)\|_{\infty,R_z}\le R,
\]
and consequently
\[
\mathcal A_z(B_R)\subset B_R.
\]

\emph{Step B: Contraction estimate.}

We now prove that \(\mathcal A_z\) is a contraction on \(B_R\). Let \(U,V\in B_R\).
Then, for every \(\xi\in R_z\),
\[
|N_0(\xi)+U(\xi)+d|\ge m_z(R),
\qquad
|N_0(\xi)+V(\xi)+d|\ge m_z(R).
\]
Hence
\[
\left|
\frac{1}{N_0(\xi)+U(\xi)+d}
-
\frac{1}{N_0(\xi)+V(\xi)+d}
\right|
\le
\frac{1}{m_z(R)^2}|U(\xi)-V(\xi)|.
\]
Also,
\[
|N_0(\xi)+U(\xi)|\le M_z(R),
\qquad
|N_0(\xi)+V(\xi)|\le M_z(R).
\]
Therefore,
\[
\left|
\bigl(N_0(\xi)+U(\xi)\bigr)^2
-
\bigl(N_0(\xi)+V(\xi)\bigr)^2
\right|
\le
2M_z(R)|U(\xi)-V(\xi)|.
\]
It follows that
\[
\begin{aligned}
|\mathcal A_z(U)(\rho)-\mathcal A_z(V)(\rho)|
&\le
\int_0^\rho
e^{k_1(\rho-\xi)}
\left(
\frac{|k_2|}{m_z(R)^2}
+
2|k_3|M_z(R)
\right)
|U(\xi)-V(\xi)|\,d\xi  \\
&\le
\Gamma_z
\left(
\frac{|k_2|}{m_z(R)^2}
+
2|k_3|M_z(R)
\right)
\|U-V\|_{\infty,R_z}.
\end{aligned}
\]
By definition of \(\delta_z(R)\), this gives
\[
|\mathcal A_z(U)(\rho)-\mathcal A_z(V)(\rho)|
\le
\delta_z(R)\|U-V\|_{\infty,R_z}.
\]
Taking the supremum over \(\rho\in R_z\), we obtain
\[
\|\mathcal A_z(U)-\mathcal A_z(V)\|_{\infty,R_z}
\le
\delta_z(R)\|U-V\|_{\infty,R_z}.
\]

\emph{Step C: Application of Banach's fixed-point theorem.}

Since \(\delta_z(R)<1\), the operator \(\mathcal A_z\) is a contraction on \(B_R\).
Since \(B_R\) is a closed subset of the Banach space \(C(R_z)\), it is complete.
Therefore, by Banach's fixed-point theorem, there exists a unique \(U^*\in B_R\) such
that
\[
\mathcal A_z(U^*)=U^*.
\]
Consequently,
\[
N^*(\rho)=N_0(\rho)+U^*(\rho)
\]
satisfies the transformed Volterra integral equation associated with the logistic--Allee
model on \(R_z\).

\emph{Step D: Recovery of the transformed differential equation.}

Since the integrand in the Volterra equation is continuous, \(U^*\) is continuously
differentiable on \(R_z\). Differentiating the Volterra equation gives
\[
(U^*)'(\rho)
=
k_1U^*(\rho)
+
\frac{k_2}{N_0(\rho)+U^*(\rho)+d}
-
k_3\bigl(N_0(\rho)+U^*(\rho)\bigr)^2.
\]
Using
\[
N^*(\rho)=N_0(\rho)+U^*(\rho)
\]
and
\[
N_0'(\rho)=k_1N_0(\rho)-k_4,
\]
we obtain
\[
(N^*)'(\rho)
=
k_1N^*(\rho)
+
\frac{k_2}{N^*(\rho)+d}
-
k_3(N^*(\rho))^2
-
k_4.
\]
Moreover,
\[
N^*(0)=N_0(0)+U^*(0)=\eta,
\]
because the Volterra equation gives \(U^*(0)=0\). Hence \(N^*\) satisfies the transformed
differential equation.

\emph{Step E: Picard convergence.}

The convergence of the Picard sequence follows directly from Banach's fixed-point
theorem. Hence
\[
\lim_{n\to\infty}
\|\widetilde U_n-U^*\|_{\infty,R_z}=0.
\]
Adding \(N_0\) to both terms gives
\[
\lim_{n\to\infty}
\|N_0+\widetilde U_n-(N_0+U^*)\|_{\infty,R_z}=0.
\]
Since \(r_z(I)=R_z\), composition with \(r_z(t)\) yields
\[
\lim_{n\to\infty}
\left\|
N_0(r_z(\cdot))+\widetilde U_n(r_z(\cdot))
-
\bigl[N_0(r_z(\cdot))+U^*(r_z(\cdot))\bigr]
\right\|_{\infty,I}
=0.
\]

\emph{Step F: Connection with the Adomian expansion.}

It remains to explain the connection with the Adomian expansion. On \(B_R\), the
denominator
\[
N_0+U+d
\]
is bounded away from zero by \(m_z(R)>0\). Hence the map
\[
U\mapsto \frac{1}{N_0+U+d}
\]
is analytic on the admissible ball \(B_R\). Therefore, the associated Adomian polynomials
for the rational term are well defined. The map
\[
U\mapsto (N_0+U)^2
\]
is polynomial, so its Adomian expansion is given by the corresponding Cauchy product.

Assume now that the Adomian correction series
\[
U_A(\rho)=\sum_{n=1}^{\infty}N_n(\rho)
\]
converges uniformly on \(R_z\), that its partial sums
\[
U_n=N_1+\cdots+N_n
\]
remain in \(B_R\), and that the nonlinear series
\[
\frac{1}{N_0+U_A+d}
\qquad \text{and} \qquad
(N_0+U_A)^2
\]
are the uniform limits on \(R_z\) of the corresponding Adomian expansions. Then \(U_A\)
may be substituted into the transformed correction equation, and therefore
\[
\mathcal A_z(U_A)=U_A.
\]
By uniqueness of the fixed point in \(B_R\), it follows that
\[
U_A=U^*.
\]
Thus, under the stated additional admissibility and uniform-convergence hypotheses, the
Adomian series represents the unique admissible fixed point of the transformed correction
equation. In particular,
\[
S_1=N_0+N_1
\]
is the first nontrivial Adomian truncation associated with the admissible fixed-point
problem.
\end{proof}

\begin{Remark}
Theorem~\ref{thm:fixed_point_convergence_transform_adomian} establishes the existence
and uniqueness of the admissible solution of the transformed correction equation by
Banach's fixed-point theorem. Under the additional uniform convergence assumptions
stated in the theorem, this fixed point coincides with the limit of the corresponding
Adomian correction series. Hence the first-order approximation $S_1$
is the first nontrivial Adomian truncation of the admissible fixed-point solution.

The following result quantifies the error introduced by truncating the admissible
correction expansion after finitely many terms and provides a practical stopping
criterion for computations.
\end{Remark}

\begin{Corollary}
\label{cor:first_order_truncation_estimate}
Under the hypotheses of Theorem~\ref{thm:fixed_point_convergence_transform_adomian},
let
\[
\widetilde U_0=0,
\qquad
\widetilde U_{n+1}=\mathcal A_z(\widetilde U_n).
\]
Then
\[
\|\widetilde U_n-U^*\|_{\infty,R_z}
\le
\frac{\delta_z(R)}{1-\delta_z(R)}
\|\widetilde U_n-\widetilde U_{n-1}\|_{\infty,R_z},
\qquad n\ge 1.
\]
In particular, since the first Picard correction coincides with the first Adomian correction,
\[
\widetilde U_1=\mathcal A_z(0)=N_1,
\]
one obtains
\[
\|U^*-N_1\|_{\infty,R_z}
\le
\frac{\delta_z(R)}{1-\delta_z(R)}
\|N_1\|_{\infty,R_z}.
\]
Consequently,
\[
\|N_0+U^*-(N_0+N_1)\|_{\infty,R_z}
\le
\frac{\delta_z(R)}{1-\delta_z(R)}
\|N_1\|_{\infty,R_z}.
\]
Equivalently,
\[
\|N_0(r_z(\cdot))+U^*(r_z(\cdot))-S_1(r_z(\cdot))\|_{\infty,I}
\le
\frac{\delta_z(R)}{1-\delta_z(R)}
\|N_1\|_{\infty,R_z}.
\]
\end{Corollary}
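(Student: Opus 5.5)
The plan is to use the standard a posteriori estimate for Banach contractions, applied to the Picard sequence of the previous theorem. Set $\delta=\delta_z(R)<1$. Theorem~\ref{thm:fixed_point_convergence_transform_adomian} gives three facts we will rely on. First, $\mathcal A_z(B_R)\subset B_R$. Second, $\|\mathcal A_z(U)-\mathcal A_z(V)\|_{\infty,R_z}\le\delta\|U-V\|_{\infty,R_z}$ on $B_R$. Third, there is a unique fixed point $U^*\in B_R$. Since $\widetilde U_0=0\in B_R$, induction and invariance place every iterate $\widetilde U_n$ in $B_R$, so the contraction estimate applies to every pair of iterates and to each pair $(\widetilde U_n,U^*)$.

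For the main estimate, fix $n\ge1$ and use $U^*=\mathcal A_z(U^*)$ and $\widetilde U_n=\mathcal A_z(\widetilde U_{n-1})$ to get
\[
\|\widetilde U_n-U^*\|_{\infty,R_z}\le\delta\|\widetilde U_{n-1}-U^*\|_{\infty,R_z}
\le\delta\bigl(\|\widetilde U_{n-1}-\widetilde U_n\|_{\infty,R_z}+\|\widetilde U_n-U^*\|_{\infty,R_z}\bigr).
\]
Because $1-\delta>0$, we can move the last term to the left and divide. This gives the stated bound with factor $\delta/(1-\delta)$.

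Next we identify the first iterate. By definition,
\[
\widetilde U_1(\rho)=\mathcal A_z(0)(\rho)=\int_0^\rho e^{k_1(\rho-\xi)}\Bigl[\tfrac{k_2}{N_0(\xi)+d}-k_3N_0(\xi)^2\Bigr]d\xi .
\]
The integrand is $k_2A_0-k_3B_0$, using \eqref{a0} and \eqref{b0}. We then compare this with the transform-domain recursion for $N_1$, namely $\frac{1}{s-k_1}$ times the transforms of $A_0$ and $B_0$. Inverting through Corollary~\ref{invL} (or Corollary~\ref{invrelat} in the $\mu$-scaled case) turns this into exactly the classical Laplace convolution $\int_0^\rho e^{k_1(\rho-\xi)}(\cdot)\,d\xi$ in the transformed variable. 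The explicit integrals computed in Section~\ref{sec:method} have this form. Hence $\widetilde U_1=N_1$ as a function of $\rho$.

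This identification is the one step that needs care. In the $\mu$-scaled case the factors of $\mu$ must cancel, because the transformed variable is $y_\mu$ and the convolution is the classical one in $y$. Apart from that, the step is a bookkeeping check. With $n=1$ and $\widetilde U_0=0$, the general bound gives $\|U^*-N_1\|_{\infty,R_z}\le\frac{\delta}{1-\delta}\|N_1\|_{\infty,R_z}$.

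Adding and subtracting $N_0$ does not change the difference, which gives the estimate for $N_0+U^*-(N_0+N_1)$. Finally, $r_z$ is continuous and strictly increasing with $r_z(I)=R_z$, as noted in the theorem. Therefore the supremum over $t\in I$ of a function composed with $r_z$ equals the supremum over $\rho\in R_z$, and this yields the equivalent statement on $I$ with $S_1(r_z(\cdot))$.
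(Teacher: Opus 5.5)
Your proposal is correct and follows the paper's route: apply the standard a posteriori contraction estimate at $n=1$, use $\widetilde U_1=\mathcal A_z(0)=N_1$, add $N_0$, and compose with $r_z$. You fill in details the paper only asserts: the triangle-inequality derivation of the bound, the invariance of the Picard iterates in $B_R$, and the convolution check (including the cancellation of $\mu$) that $\mathcal A_z(0)$ reproduces the first Adomian correction.
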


\begin{proof}
Since \(\mathcal A_z\) is a contraction with constant \(\delta_z(R)<1\), the standard
a posteriori estimate for fixed-point iterations gives
\[
\|\widetilde U_n-U^*\|_{\infty,R_z}
\le
\frac{\delta_z(R)}{1-\delta_z(R)}
\|\widetilde U_n-\widetilde U_{n-1}\|_{\infty,R_z},
\qquad n\ge 1.
\]
Taking \(n=1\), and using
\[
\widetilde U_0=0,
\qquad
\widetilde U_1=\mathcal A_z(0)=N_1,
\]
one obtains
\[
\|U^*-N_1\|_{\infty,R_z}
\le
\frac{\delta_z(R)}{1-\delta_z(R)}
\|N_1\|_{\infty,R_z}.
\]
Adding \(N_0\) to both terms does not change the norm of the difference. Composition
with \(r_z(t)\) gives the estimate on \(I\).
\end{proof}

\subsection{Admissible Regions and Optimality for the Parametric Kernel}
\label{subsec:parametric_kernel_admissibility}

The following result specializes the admissibility conditions to the parametric kernel used in the numerical construction.

\begin{Proposition}
\label{prop:admissible_region_parametric_kernel}
Let \(I=[0,t_f]\), \(t_f<\infty\), and let
\[
T(t,\alpha;z)= \sigma^{1-\alpha}\bigl(1+c(1-\alpha)t\bigr)^p,
\qquad 
z=(\alpha,c, \sigma,p),
\]
where
\[
0<\alpha<1,\qquad  \sigma>0,\qquad c\neq 0,\qquad p\neq 1.
\]
For \(\mu>0\), define
\[
y_{\mu,z}(t)
=
\mu  \sigma^{\alpha-1}
\frac{
\left(1+c(1-\alpha)\dfrac{t}{\mu}\right)^{1-p}-1
}{
c(1-\alpha)(1-p)
}.
\]
Assume
\[
k_1>0,\qquad h_3>0,\qquad h_1<0,\qquad h_1+h_3>0.
\]
Set
\[
y_{\mathrm{crit}}
=
-\frac{1}{k_1}\ln\left(-\frac{h_1}{h_3}\right).
\]
Then \(y_{\mathrm{crit}}>0\). Moreover, \((z,\mu)\) is admissible on \(I\) if and only if
\[
1+c(1-\alpha)\frac{t_f}{\mu}>0
\]
and
\[
y_{\mu,z}(t_f)<y_{\mathrm{crit}}.
\]
Equivalently, if
\[
\Omega_{\alpha,c,p,\mu}(t_f)
=
\mu
\frac{
\left(1+c(1-\alpha)\dfrac{t_f}{\mu}\right)^{1-p}-1
}{
c(1-\alpha)(1-p)
},
\]
then
\[
y_{\mu,z}(t_f)= \sigma^{\alpha-1}\Omega_{\alpha,c,p,\mu}(t_f),
\]
and, for fixed \((\alpha,c,p,\mu)\) satisfying
\[
1+c(1-\alpha)\frac{t_f}{\mu}>0,
\]
the admissible values of $\sigma$ are precisely
\[
 \sigma>
\left(
\frac{\Omega_{\alpha,c,p,\mu}(t_f)}
{y_{\mathrm{crit}}}
\right)^{\frac{1}{1-\alpha}}.
\]
The non-scaled case is obtained by taking \(\mu=1\).
\end{Proposition}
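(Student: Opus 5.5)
The plan is to unpack the two admissibility inequalities of Theorem~\ref{thm:admissibility_finiteness_S1}, specialized to the $\mu$-scaled variable $r_z=y_{\mu,z}$, and show that each reduces to a single condition at the right endpoint $t_f$.

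\emph{Sign of $y_{\mathrm{crit}}$.} From $h_1<0$ and $h_1+h_3>0$ we get $0<-h_1<h_3$, so $0<-h_1/h_3<1$. Hence $\ln(-h_1/h_3)<0$, and since $k_1>0$ this gives $y_{\mathrm{crit}}>0$.

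\emph{Kernel condition.} The map $t\mapsto 1+c(1-\alpha)t/\mu$ is affine in $t$ and equals $1$ at $t=0$. It is therefore positive on $[0,t_f]$ if and only if it is positive at $t=t_f$.

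\emph{Logarithmic condition.} Under the kernel condition, $T(t/\mu,\alpha;z)>0$ on $I$. I would check by differentiation that the closed form of $y_{\mu,z}$ equals $\mu\int_0^{t/\mu}d\omega/T(\omega,\alpha;z)$. Differentiating the closed form gives
\[
\sigma^{\alpha-1}\bigl(1+c(1-\alpha)t/\mu\bigr)^{-p}=\frac{1}{T(t/\mu,\alpha;z)},
\]
and the closed form vanishes at $t=0$. So $y_{\mu,z}$ is continuous and strictly increasing on $I$, with $y_{\mu,z}(0)=0$.

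Because $h_3>0$ and $k_1>0$, we have
\[
h_1+h_3e^{-k_1y}>0 \iff e^{-k_1y}>-h_1/h_3 \iff y<y_{\mathrm{crit}}.
\]
By monotonicity, $y_{\mu,z}(t)<y_{\mathrm{crit}}$ for all $t\in I$ if and only if $y_{\mu,z}(t_f)<y_{\mathrm{crit}}$. (At $t=0$ the condition holds automatically, since $y_{\mathrm{crit}}>0$.) This proves the first equivalence.

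\emph{Solving for $\sigma$.} The factorization $y_{\mu,z}(t_f)=\sigma^{\alpha-1}\Omega_{\alpha,c,p,\mu}(t_f)$ is immediate from the formula. Since $y_{\mu,z}$ is increasing from $0$, we get $\Omega\ge 0$, with $\Omega>0$ when $t_f>0$. The condition $\sigma^{\alpha-1}\Omega<y_{\mathrm{crit}}$ is then equivalent to $\sigma^{1-\alpha}>\Omega/y_{\mathrm{crit}}$. Because $1-\alpha>0$, the map $u\mapsto u^{1/(1-\alpha)}$ is strictly increasing on $[0,\infty)$, which yields the stated threshold for $\sigma$. Setting $\mu=1$ recovers $x_z$ and gives the non-scaled case.

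\emph{Main obstacle.} The delicate step is establishing the sign of $\Omega$ and the monotonicity of $y_{\mu,z}$ directly from the closed form, where the signs of $c$ and $1-p$ are arbitrary. Working from the integral representation avoids this casework, since the integrand $1/T$ is positive. The degenerate case $t_f=0$ gives $\Omega=0$, so the threshold is $0$ and every $\sigma>0$ is admissible, which is consistent with the statement.
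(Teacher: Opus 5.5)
Your proposal is correct and follows essentially the same route as the paper. Both arguments reduce the affine kernel condition to the endpoint $t_f$, use $y_{\mu,z}'=1/T(t/\mu,\alpha;z)>0$ for monotonicity, translate the logarithmic condition into $y<y_{\mathrm{crit}}$, and solve $\sigma^{\alpha-1}\Omega<y_{\mathrm{crit}}$ for $\sigma$. You also supply details the paper leaves implicit: an explicit proof that $y_{\mathrm{crit}}>0$ via $0<-h_1/h_3<1$, a derivative check that the closed form equals the integral representation, and the degenerate case $t_f=0$, where $\Omega=0$.
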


\begin{proof}
The condition \(T(t/\mu,\alpha;z)>0\) on \(I\) is equivalent to
\[
1+c(1-\alpha)\frac{t}{\mu}>0,
\qquad 0\leq t\leq t_f.
\]
Since this expression is affine in \(t\) and equals \(1\) at \(t=0\), this is equivalent to
\[
1+c(1-\alpha)\frac{t_f}{\mu}>0.
\]
Under this condition,
\[
y'_{\mu,z}(t)=\frac{1}{T(t/\mu,\alpha;z)}>0,
\]
so \(y_{\mu,z}\) is strictly increasing on \(I\).

The logarithmic admissibility condition is
\[
h_1+h_3e^{-k_1y_{\mu,z}(t)}>0.
\]
Since \(k_1>0\), \(h_3>0\), \(h_1<0\), and \(h_1+h_3>0\), the function
\[
y\mapsto h_1+h_3e^{-k_1y}
\]
has the unique positive zero
\[
y_{\mathrm{crit}}
=
-\frac{1}{k_1}\ln\left(-\frac{h_1}{h_3}\right).
\]
Hence the logarithmic condition holds on \(I\) if and only if
\[
y_{\mu,z}(t)<y_{\mathrm{crit}},
\qquad 0\leq t\leq t_f.
\]
Since \(y_{\mu,z}\) is strictly increasing, this is equivalent to
\[
y_{\mu,z}(t_f)<y_{\mathrm{crit}}.
\]

Finally,
\[
y_{\mu,z}(t_f)= \sigma^{\alpha-1}\Omega_{\alpha,c,p,\mu}(t_f).
\]
Because \(0<\alpha<1\),
\[
 \sigma^{\alpha-1}=\frac{1}{\sigma^{1-\alpha}}.
\]
Observe that
\[
\Omega_{\alpha,c,p,\mu}(t_f)>0,
\]
since it is obtained from an integral of positive functions over a nontrivial interval. Consequently, the inequality
\[
\sigma^{\alpha-1}\Omega_{\alpha,c,p,\mu}(t_f)<y_{\rm crit}
\]
is equivalent to
\[
\sigma>
\left(
\frac{\Omega_{\alpha,c,p,\mu}(t_f)}
{y_{\rm crit}}
\right)^{1/(1-\alpha)}.
\]

\end{proof}

\begin{Theorem}
\label{thm:interior_optimality_transform_region}
Let \(\mu>0\) be fixed, and let
\[
\mathcal D_\mu
=
\{z=(\alpha,c, \sigma,p):(z,\mu)\ \text{belongs to the admissible region described in Proposition~\ref{prop:admissible_region_parametric_kernel}}\}.
\]
For \(z=(\alpha,c, \sigma,p)\in \mathcal D_\mu\), define
\[
Q_\mu(z)
=
\frac{1}{m}
\sum_{i=1}^m
\left(
S_1(y_{\mu,z}(t_i))-N_i^{\mathrm{data}}
\right)^2.
\]
Assume that \(S_1\in C^1\) on the transformed data interval and that, for each
\(i=1,\ldots,m\), the map
\[
z\mapsto y_{\mu,z}(t_i)
\]
is \(C^1\) on \(\mathcal D_\mu\). If
\[
z^*=(\alpha^*,c^*, \sigma^*,p^*)
\]

is an interior local minimizer of \(Q_\mu\) in \(\mathcal D_\mu\), then the first-order
optimality conditions
\[
\frac{\partial Q_\mu}{\partial\vartheta}(z^*)=0,
\qquad
\vartheta\in\{\alpha,c,\sigma,p\},
\]
hold. Equivalently, for every
\[
\vartheta\in\{\alpha,c,\sigma,p\},
\]
one has

\[
\sum_{i=1}^m
\left(
S_1(y_{\mu,z^*}(t_i))-N_i^{\mathrm{data}}
\right)
S_1'(y_{\mu,z^*}(t_i))
\frac{\partial y_{\mu,z}}{\partial \vartheta}(t_i)
\bigg|_{z=z^*}
=0.
\]
\end{Theorem}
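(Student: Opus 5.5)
The plan is to treat this as the classical Fermat first-order necessary condition for an unconstrained local minimum, applied to \(Q_\mu\) on the interior of \(\mathcal D_\mu\). First I will check that \(Q_\mu\) is differentiable near \(z^*\). For each data index \(i\), the map \(z\mapsto S_1(y_{\mu,z}(t_i))\) is a composition: first \(z\mapsto y_{\mu,z}(t_i)\), which is \(C^1\) by hypothesis, and then \(S_1\), which is \(C^1\) on the transformed data interval. Admissibility, via Proposition~\ref{prop:admissible_region_parametric_kernel} and the monotonicity of \(y_{\mu,z}\), guarantees that \(y_{\mu,z}(t_i)\in[0,y_{\mu,z}(t_f)]\subset[0,y_{\mathrm{crit}})\). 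This is where \(S_1\) is real-valued and smooth, so the composition is well defined. By the chain rule, each composition is \(C^1\) on \(\mathcal D_\mu\). Since \(Q_\mu\) is a finite sum of squares of these \(C^1\) functions, \(Q_\mu\) is \(C^1\) as well.

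Next I will use interiority. Since \(z^*\) is an interior point, there is an open ball around \(z^*\) contained in \(\mathcal D_\mu\). Fix a coordinate \(\vartheta\in\{\alpha,c,\sigma,p\}\) and consider the one-variable function \(\phi(\tau)=Q_\mu(z^*+\tau e_\vartheta)\) for \(|\tau|\) small. It is differentiable and has a local minimum at \(\tau=0\). Comparing the one-sided difference quotients, one nonnegative and one nonpositive, forces \(\phi'(0)=0\), that is, \(\partial Q_\mu/\partial\vartheta(z^*)=0\).

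Finally I will compute the partial derivative explicitly with the chain rule:
\[
\frac{\partial Q_\mu}{\partial\vartheta}(z)=\frac{2}{m}\sum_{i=1}^m\left(S_1(y_{\mu,z}(t_i))-N_i^{\mathrm{data}}\right)S_1'(y_{\mu,z}(t_i))\frac{\partial y_{\mu,z}}{\partial\vartheta}(t_i).
\]
Evaluating at \(z^*\) and cancelling the nonzero factor \(2/m\) gives the stated equivalent form. The argument is reversible, so the two formulations are equivalent.

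The only delicate point is making sure the composition actually lives where \(S_1\) is \(C^1\) for all \(z\) in a neighbourhood of \(z^*\), not just at \(z^*\) itself. I would handle this through interiority together with the strict inequalities defining admissibility. These inequalities are open conditions, because \(y_{\mu,z}(t_f)\) depends continuously on \(z\), so the whole neighbourhood stays admissible. Then \(y_{\mu,z}(t_i)<y_{\mathrm{crit}}\) throughout the neighbourhood, which keeps the logarithmic term in \(N_1\) smooth there.
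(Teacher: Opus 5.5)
Your proposal is correct and follows essentially the same route as the paper: write $Q_\mu$ as a sum of squared residuals, differentiate each residual by the chain rule through $z\mapsto y_{\mu,z}(t_i)$, use that the partial derivatives vanish at an interior local minimizer, and cancel the factor $2/m$. Your additional justifications (the one-variable Fermat argument, and checking that a whole neighbourhood of $z^*$ keeps $y_{\mu,z}(t_i)<y_{\mathrm{crit}}$ so that $S_1$ stays $C^1$) make explicit steps that the paper leaves implicit.
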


\begin{proof}
Let
\[
e_i(z)
=
S_1(y_{\mu,z}(t_i))-N_i^{\mathrm{data}},
\qquad i=1,\ldots,m.
\]
Then
\[
Q_\mu(z)=\frac{1}{m}\sum_{i=1}^m e_i(z)^2.
\]
For any component \(\vartheta\in\{\alpha,c, \sigma,p\}\), we have
\[
\frac{\partial Q_\mu}{\partial\vartheta}
=
\frac{2}{m}
\sum_{i=1}^m
e_i(z)
\frac{\partial e_i}{\partial\vartheta}.
\]
By the chain rule,
\[
\frac{\partial e_i}{\partial\vartheta}
=
S_1'(y_{\mu,z}(t_i))
\frac{\partial y_{\mu,z}}{\partial\vartheta}(t_i).
\]
Since \(z^*\) is an interior local minimizer of \(Q_\mu\) in \(\mathcal D_\mu\),
\[
\frac{\partial Q_\mu}{\partial\vartheta}(z^*)=0,
\qquad
\vartheta\in\{\alpha,c, \sigma,p\}.
\]
Therefore,
\[
\sum_{i=1}^m
\left(
S_1(y_{\mu,z^*}(t_i))-N_i^{\mathrm{data}}
\right)
S_1'(y_{\mu,z^*}(t_i))
\frac{\partial y_{\mu,z}}{\partial \vartheta}(t_i)
\bigg|_{z=z^*}
=0,
\qquad
\vartheta\in\{\alpha,c, \sigma,p\}.
\]
The result follows.
\end{proof}

\begin{Remark}
Since \(J_\mu(z)=\sqrt{Q_\mu(z)}\), the minimizers of \(J_\mu\) and \(Q_\mu\) coincide whenever \(Q_\mu(z)>0\). If a minimizer lies on the boundary of \(D_\mu\), the interior condition in Theorem~\ref{thm:interior_optimality_transform_region} is replaced by the corresponding constrained optimality conditions.
\end{Remark}

\section{Conclusions}
\label{sec:conclusions}

In this work, we developed a generalized transform framework for constructing admissible semi-analytical approximations to a logistic--Allee tumor-growth model. The framework combines a generalized Laplace transform, Adomian decomposition, Chebyshev--Pad\'e rational reconstruction, and a $\mu$-scaled generalized transform. The numerical results show that compact low-order representations can approximate experimental tumor-growth data with errors comparable to those obtained from standard numerical reference solutions.

Beyond the numerical implementation, the paper provides the analytical basis for the method through inverse relations, convolution identities, and residue inversion formulas. It also establishes admissibility conditions, proves the existence of an admissible minimizer, characterizes admissible regions for the parametric kernel family, derives an interior optimality condition for the transform-parameter fitting problem, and proves fixed-point consistency of the transformed correction equation together with a first-order truncation estimate.

The purpose of the paper is not to claim biological superiority over existing tumor-growth models, but rather to demonstrate how the generalized-transform framework can generate admissible semi-analytical representations for nonlinear differential equations. Future work will address higher-order corrections, broader nonlinear biological models, parameter sensitivity, identifiability, and error propagation within the generalized-transform setting.


\appendix
\section{Technical Proofs for the $\mu$-scaled Generalized Transform}
\label{app:mu_scaled_transform_proofs}

\subsection{Proof of Theorem~\ref{t:comp2}}
\label{app:proof_tcomp2}
\begin{proof}
Let us start by assuming that $D^{\lceil \a \rceil}f$ exists at the point $t$. Now take $q=h \, T\!\left(\frac{t}{\mu},\alpha\right)$ in the definition of $G_{T,\mu}^{\alpha}f(t)$,
\begin{equation*}
    \begin{aligned}
    G_{T,\mu}^{\alpha}f(t)
    &=
    T\!\left(\frac{t}{\mu},\alpha\right)^{{\lceil \alpha \rceil}}
    \lim_{q\to 0}
    \frac{1}{q^{{\lceil \alpha \rceil}}}
    \sum_{k=0}^{{\lceil \alpha \rceil}}
    (-1)^k
    \binom{{\lceil \alpha \rceil}}{k}
    f(t-kq)  \\
    &=
    T\!\left(\frac{t}{\mu},\alpha\right)^{{\lceil \alpha \rceil}}
    D^{\lceil \alpha \rceil}f(t).
    \end{aligned}
\end{equation*}
This proves $(1)$.

For $(2)$, let $0<\alpha\leq 1$. Then $\lceil\alpha\rceil=1$, and the definition gives
\[
G_{T,\mu}^{\alpha}f(t)
=
\lim_{h\to 0}
\frac{f(t)-f\!\left(t-hT\!\left(\frac{t}{\mu},\alpha\right)\right)}{h}.
\]
Taking $q=hT\!\left(\frac{t}{\mu},\alpha\right)$, we obtain
\[
G_{T,\mu}^{\alpha}f(t)
=
T\!\left(\frac{t}{\mu},\alpha\right)
\lim_{q\to 0}
\frac{f(t)-f(t-q)}{q}.
\]
Hence the above limit exists if and only if $f$ is differentiable at $t$, and in that case
\[
G_{T,\mu}^{\alpha}f(t)
=
T\!\left(\frac{t}{\mu},\alpha\right)f'(t).
\]
\end{proof}

\subsection{Proof of Theorem~\ref{t:mu_derivative_transform}}
\label{app:proof_mu_derivative_transform}
\begin{proof}
Since \(0<\alpha\leq 1\), the \(\mu\)-scaled generalized derivative satisfies
\[
G_{T,\mu}^{\alpha}f(t)
=
T\left(\frac{t}{\mu},\alpha\right)f'(t).
\]
Therefore,
\[
\begin{aligned}
CC_{T,\mu}^{\alpha}[G_{T,\mu}^{\alpha}f](s,\mu)
&=
\mu\int_{0}^{\infty}
e^{-s\mu x(t/\mu)}
G_{T,\mu}^{\alpha}f(t)
\frac{dt}{T(t/\mu,\alpha)}
\\
&=
\mu\int_{0}^{\infty}
e^{-s\mu x(t/\mu)}f'(t)\,dt.
\end{aligned}
\]
Now let
\[
u=e^{-s\mu x(t/\mu)},
\qquad
dv=f'(t)\,dt.
\]
Then
\[
du
=
-\frac{s}{T(t/\mu,\alpha)}
e^{-s\mu x(t/\mu)}\,dt,
\qquad
v=f(t).
\]
Using integration by parts, we obtain
\[
\begin{aligned}
CC_{T,\mu}^{\alpha}[G_{T,\mu}^{\alpha}f](s,\mu)
&=
\mu\left[
e^{-s\mu x(t/\mu)}f(t)
\right]_{0}^{\infty}
+
s\mu\int_{0}^{\infty}
e^{-s\mu x(t/\mu)}f(t)
\frac{dt}{T(t/\mu,\alpha)}.
\end{aligned}
\]
Since $\lim_{t\to\infty} e^{-s\mu x(t/\mu)}f(t)=0$, and since \(x(0)=0\), we get
$$
CC_{T,\mu}^{\alpha}[G_{T,\mu}^{\alpha}f](s,\mu)
=
s\,CC_{T,\mu}^{\alpha}[f](s,\mu)-\mu f(0).
$$
\end{proof}

\subsection{Proof of Theorem~\ref{t:mu_laplace_relation}}
\label{app:proof_mu_laplace_relation}
\begin{proof}
Since \(y_\mu(t)=\mu x(t/\mu)\), we have
\[
y_\mu'(t)=\frac{1}{T(t/\mu,\alpha)}.
\]
Thus,
\[
CC_{T,\mu}^{\alpha}[f](s,\mu)
=
\mu\int_0^\infty e^{-s y_\mu(t)}f(t)\,dy_\mu(t).
\]
Taking \(r=y_\mu(t)\), \(dr=dy_\mu(t)\), and \(t=u_\mu(r)\), we obtain
\[
CC_{T,\mu}^{\alpha}[f](s,\mu)
=
\mu\int_0^\infty e^{-sr}f(u_\mu(r))\,dr
=
\mu\int_0^\infty e^{-sr}g_\mu(r)\,dr
=
\mu L[g_\mu](s).
\]
\end{proof}

\subsection{Alternative $\mu$-convolution theorem}
\label{app:mu_convolution_alt}

The following result can be obtained in two equivalent ways. The first proof uses the relation between \(CC_{T,\mu}^{\alpha}\) and the classical Laplace transform, while the second proof derives the convolution identity directly from the definition of the \(\mu\)-scaled transform.

\begin{Theorem}
\label{t:mu_convolution_alt}
Let \(f,g:[0,\infty)\to\RR\) be measurable functions such that the following transforms exist, with \(0<\alpha\leq 1\) and \(0<\mu<\infty\). Let
\[
y_\mu(t)=\mu x\left(\frac{t}{\mu}\right),
\qquad
u_\mu=y_\mu^{-1}.
\]
Define the \(\mu\)-convolution by
$$
(f*_\mu g)(t)
=
\int_0^t
f(\tau)\,
g\!\left(u_\mu\!\left(y_\mu(t)-y_\mu(\tau)\right)\right)
\frac{d\tau}{T(\tau/\mu,\alpha)}.
$$
Then
$$
CC_{T,\mu}^{\alpha}[f*_\mu g](s,\mu)
=
\frac{1}{\mu}
CC_{T,\mu}^{\alpha}[f](s,\mu)\,
CC_{T,\mu}^{\alpha}[g](s,\mu).
$$
\end{Theorem}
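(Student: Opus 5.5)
The plan is to reduce the identity to the classical Laplace convolution theorem via Theorem~\ref{t:mu_laplace_relation}, and then to check it directly from the definition as a consistency check.

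\emph{First route.} Put $g_\mu=g\circ u_\mu$, $f_\mu=f\circ u_\mu$, and $h=f*_\mu g$. Theorem~\ref{t:mu_laplace_relation} gives $CC_{T,\mu}^{\alpha}[h](s,\mu)=\mu L[h\circ u_\mu](s)$. So it suffices to show that $h\circ u_\mu$ is the classical convolution $f_\mu * g_\mu$.

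Fix $r\ge 0$ and set $t=u_\mu(r)$. In the integral defining $h(t)$, substitute $\rho=y_\mu(\tau)$. Since $y_\mu'(\tau)=1/T(\tau/\mu,\alpha)$, we have $d\rho=d\tau/T(\tau/\mu,\alpha)$. Also $y_\mu$ is strictly increasing (because $T>0$) and $y_\mu(0)=0$, so $\tau\in[0,t]$ corresponds to $\rho\in[0,r]$. This gives
\[
h(u_\mu(r))=\int_0^r f(u_\mu(\rho))\,g\big(u_\mu(r-\rho)\big)\,d\rho=(f_\mu*g_\mu)(r).
\]
The classical convolution theorem then gives $L[f_\mu*g_\mu]=L[f_\mu]\,L[g_\mu]$. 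Applying Theorem~\ref{t:mu_laplace_relation} once more, $L[f_\mu]=\mu^{-1}CC_{T,\mu}^{\alpha}[f](s,\mu)$ and likewise for $g$. Multiplying by $\mu$ yields the factor $\frac{1}{\mu}$ in front of the product.

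\emph{Second route.} Start from the definition:
\[
CC_{T,\mu}^{\alpha}[h]=\mu\int_0^\infty\!\!\int_0^t e^{-sy_\mu(t)}f(\tau)\,g\big(u_\mu(y_\mu(t)-y_\mu(\tau))\big)\frac{d\tau}{T(\tau/\mu,\alpha)}\frac{dt}{T(t/\mu,\alpha)}.
\]
Swap the order of integration to $\tau\in[0,\infty)$, $t\in[\tau,\infty)$. In the inner integral, write $e^{-sy_\mu(t)}=e^{-sy_\mu(\tau)}e^{-s(y_\mu(t)-y_\mu(\tau))}$ and substitute $t=u_\mu(y_\mu(t)-y_\mu(\tau))$-style, i.e.\ $w=u_\mu(y_\mu(t)-y_\mu(\tau))$. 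Then $y_\mu(w)=y_\mu(t)-y_\mu(\tau)$, so $dw/T(w/\mu,\alpha)=dt/T(t/\mu,\alpha)$, and $w$ runs over $[0,\infty)$. The double integral factors as
\[
\mu\cdot\frac{1}{\mu}CC_{T,\mu}^{\alpha}[f]\cdot\frac{1}{\mu}CC_{T,\mu}^{\alpha}[g],
\]
which is the claimed identity.

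\emph{Main obstacle.} The only step needing care is justifying Fubini/Tonelli for the interchange. I would first run the whole computation with $|f|$, $|g|$ and $\Re s$ in place of $f$, $g$, $s$. By Tonelli this shows the double integral is absolutely convergent, using the assumed existence (absolute convergence) of the transforms of $f$ and $g$. After that the interchange and the substitutions are legitimate. A second, minor point is that $y_\mu$ must be a bijection of $[0,\infty)$ onto itself. This follows from $T>0$ continuous together with the standing hypothesis $\int_0^\infty d\omega/T=\infty$, since $y_\mu(t)=\mu x(t/\mu)$.
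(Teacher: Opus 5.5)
Your proposal is correct, and your first route is the paper's proof: set $\widehat f=f\circ u_\mu$ and $\widehat g=g\circ u_\mu$, use Theorem~\ref{t:mu_laplace_relation} to write each transform as $\mu$ times a classical Laplace transform, identify $(f*_\mu g)(t)$ with $(\widehat f*\widehat g)(y_\mu(t))$ via $\xi=y_\mu(\tau)$, and apply the classical convolution theorem. Your second, Fubini-based route mirrors the paper's direct proof of the companion Theorem~\ref{t:mu_convolution}, and your Tonelli argument with $|f|$, $|g|$, $\Re s$ supplies the absolute-convergence justification that the paper leaves implicit.
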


\begin{proof}
Let
$$
\widehat f(r)=f(u_\mu(r)),
\qquad
\widehat g(r)=g(u_\mu(r)).
$$
Hence
$$
CC_{T,\mu}^{\alpha}[f](s,\mu)=\mu L[\widehat f](s),
\qquad
CC_{T,\mu}^{\alpha}[g](s,\mu)=\mu L[\widehat g](s).
$$
Using the classical convolution and taking \(r=y_\mu(t)\), with
\[
\xi=y_\mu(\tau),
\qquad
d\xi=\frac{d\tau}{T(\tau/\mu,\alpha)},
\]
we obtain
$$
\begin{aligned}
(\widehat f*\widehat g)(y_\mu(t))
&=
\int_0^t
\widehat f(y_\mu(\tau))\,
\widehat g(y_\mu(t)-y_\mu(\tau))
\frac{d\tau}{T(\tau/\mu,\alpha)}
\\
&=
\int_0^t
f(\tau)\,
g\!\left(u_\mu\!\left(y_\mu(t)-y_\mu(\tau)\right)\right)
\frac{d\tau}{T(\tau/\mu,\alpha)}
=
(f*_\mu g)(t).
\end{aligned}
$$
Therefore,
$$
CC_{T,\mu}^{\alpha}[f*_\mu g](s,\mu)
=
\mu L[\widehat f*\widehat g](s)
=
\mu L[\widehat f](s)L[\widehat g](s).
$$
Since
$$
L[\widehat f](s)=\frac{1}{\mu}CC_{T,\mu}^{\alpha}[f](s,\mu),
\qquad
L[\widehat g](s)=\frac{1}{\mu}CC_{T,\mu}^{\alpha}[g](s,\mu),
$$
it follows that
$$
CC_{T,\mu}^{\alpha}[f*_\mu g](s,\mu)
=
\frac{1}{\mu}
CC_{T,\mu}^{\alpha}[f](s,\mu)
CC_{T,\mu}^{\alpha}[g](s,\mu).
$$
This proves the result.
\end{proof}

\subsection{Proof of Theorem~\ref{t:mu_convolution}}
\label{app:proof_mu_convolution}
\begin{proof}
By definition and Fubini's theorem,
$$
\begin{aligned}
CC_{T,\mu}^{\alpha}[f*_\mu g](s,\mu)
&=
\mu\int_0^\infty\int_\omega^\infty
e^{-s y_\mu(t)}
f\!\left(u_\mu(y_\mu(t)-y_\mu(\omega))\right)
g(\omega)
\frac{dt}{T(t/\mu,\alpha)}
\frac{d\omega}{T(\omega/\mu,\alpha)}.
\end{aligned}
$$
Using
$$
e^{-s y_\mu(t)}
=
e^{-s y_\mu(\omega)}
e^{-s(y_\mu(t)-y_\mu(\omega))},
$$
we get
$$
\begin{aligned}
CC_{T,\mu}^{\alpha}[f*_\mu g](s,\mu)
&=
\mu\int_0^\infty
e^{-s y_\mu(\omega)}g(\omega)
\frac{d\omega}{T(\omega/\mu,\alpha)}
\\
&\quad\times
\int_\omega^\infty
e^{-s(y_\mu(t)-y_\mu(\omega))}
f\!\left(u_\mu(y_\mu(t)-y_\mu(\omega))\right)
\frac{dt}{T(t/\mu,\alpha)}.
\end{aligned}
$$
Now set
$$
y_\mu(z)=y_\mu(t)-y_\mu(\omega).
$$
Then
$$
\frac{dt}{T(t/\mu,\alpha)}
=
dy_\mu(t)
=
dy_\mu(z)
=
\frac{dz}{T(z/\mu,\alpha)},
\qquad
u_\mu(y_\mu(t)-y_\mu(\omega))=z.
$$
Hence,
$$
\begin{aligned}
CC_{T,\mu}^{\alpha}[f*_\mu g](s,\mu)
&=
\mu
\left(
\int_0^\infty
e^{-s y_\mu(z)}f(z)
\frac{dz}{T(z/\mu,\alpha)}
\right)
\left(
\int_0^\infty
e^{-s y_\mu(\omega)}g(\omega)
\frac{d\omega}{T(\omega/\mu,\alpha)}
\right)
\\
&=
\mu
\left(
\frac{1}{\mu}CC_{T,\mu}^{\alpha}[f](s,\mu)
\right)
\left(
\frac{1}{\mu}CC_{T,\mu}^{\alpha}[g](s,\mu)
\right).
\end{aligned}
$$
Therefore,
$$
CC_{T,\mu}^{\alpha}[f*_\mu g](s,\mu)
=
\frac{1}{\mu}
CC_{T,\mu}^{\alpha}[f](s,\mu)
CC_{T,\mu}^{\alpha}[g](s,\mu).
$$
This proves the result.
\end{proof}

\subsection{Proof of Proposition~\ref{constant}}
\label{app:proof_constant}
\begin{proof}
By definition,
$$
CC_{T,\mu}^{\alpha}[1](s,\mu)
=
\mu\int_{0}^{\infty}
e^{-s\mu x(t/\mu)}
\frac{dt}{T(t/\mu,\alpha)}.
$$
Since
$$
d\,x(t/\mu)=\frac{dt}{\mu T(t/\mu,\alpha)},
\qquad
\frac{dt}{T(t/\mu,\alpha)}
=
\mu\,d\,x(t/\mu),
$$
we obtain
$$
CC_{T,\mu}^{\alpha}[1](s,\mu)
=
\mu^2\int_{0}^{\infty}
e^{-s\mu x(t/\mu)}\,d\,x(t/\mu)
=
\mu^2
\left(
\frac{1}{-s\mu}
\left[
e^{-s\mu x(t/\mu)}
\right]_{0}^{\infty}
\right).
$$
Using \(s>0\), \(x(0)=0\), and \(x(t/\mu)\to\infty\) as \(t\to\infty\), it follows that
$$
CC_{T,\mu}^{\alpha}[1](s,\mu)
=
\frac{\mu^2}{s\mu}
=
\frac{\mu}{s}.
$$
\end{proof}

\subsection{Proof of Proposition~\ref{exp}}
\label{app:proof_exp}
\begin{proof}
Since
$$
E_\alpha(\mu c,t/\mu)
=
e^{c y_\mu(t)},
\qquad
y_\mu(t)=\mu x(t/\mu),
$$
we have
$$
\begin{aligned}
CC_{T,\mu}^{\alpha}\left[E_\alpha(\mu c,t/\mu)\right](s,\mu)
&=
\mu\int_0^\infty
e^{-s y_\mu(t)}e^{c y_\mu(t)}
\frac{dt}{T(t/\mu,\alpha)}
\\
&=
\mu\int_0^\infty e^{-(s-c)y_\mu(t)}\,dy_\mu(t).
\end{aligned}
$$
It follows that
$$
CC_{T,\mu}^{\alpha}\left[E_\alpha(\mu c,t/\mu)\right](s,\mu)
=
\mu\left[
\frac{-1}{s-c}e^{-(s-c)y_\mu(t)}
\right]_{0}^{\infty}
=
\frac{\mu}{s-c}.
$$
\end{proof}

\subsection{Proof of Corollary~\ref{invrelat}}
\label{app:proof_invrelat}
\begin{proof}
By Theorem~\ref{t:mu_laplace_relation},
$$
F(s,\mu)
=
CC_{T,\mu}^{\alpha}[f](s,\mu)
=
\mu L[g_\mu](s).
$$
Hence,
$$
L[g_\mu](s)=\frac{1}{\mu}F(s,\mu).
$$
Applying the classical inverse Laplace transform gives
$$
g_\mu(r)
=
L^{-1}\left[
\frac{1}{\mu}F(s,\mu)
\right](r).
$$
Since \(g_\mu(r)=f(u_\mu(r))\), taking \(r=y_\mu(t)\) gives
$$
f(t)
=
f(u_\mu(y_\mu(t)))
=
L^{-1}\left[
\frac{1}{\mu}F(s,\mu)
\right](y_\mu(t)).
$$
Finally, using
$$
y_\mu(t)=\mu x\left(\frac{t}{\mu}\right),
$$
from which it follows
$$
f(t)
=
L^{-1}\left[
\frac{1}{\mu}CC_{T,\mu}^{\alpha}[f](s,\mu)
\right]
\left(
\mu x\left(\frac{t}{\mu}\right)
\right).
$$
\end{proof}

\subsection{Computation of the first correction component}
\label{app:computation_N1_mu}

The first correction component \(N_1(t,\mu)\) satisfies
$$
CC_{T,\mu}^{\alpha}[N_1](s,\mu)
=
\frac{k_2}{s-k_1}CC_{T,\mu}^{\alpha}[A_0](s,\mu)
-
\frac{k_3}{s-k_1}CC_{T,\mu}^{\alpha}[B_0](s,\mu).
$$
Using Theorem~\ref{t:mu_convolution}, we obtain
$$
N_1(t,\mu)
=
k_2\left(E_{\alpha}(\mu k_1,\cdot/\mu)*_{\mu}A_0\right)(t)
-
k_3\left(E_{\alpha}(\mu k_1,\cdot/\mu)*_{\mu}B_0\right)(t).
$$
Thus,
$$
\begin{aligned}
N_1(t,\mu)
&=
k_2\int_0^t
E_{\alpha}\!\left(
\mu k_1,
\frac{u_{\mu}(y_{\mu}(t)-y_{\mu}(\omega))}{\mu}
\right)
\frac{1}{E_{\alpha}(\mu k_1,\omega/\mu)h_1+h_3}
\frac{d\omega}{T(\omega/\mu,\alpha)}
\\
&\quad
-
k_3\int_0^t
E_{\alpha}\!\left(
\mu k_1,
\frac{u_{\mu}(y_{\mu}(t)-y_{\mu}(\omega))}{\mu}
\right)
\left(E_{\alpha}(\mu k_1,\omega/\mu)h_1+h_2\right)^2
\frac{d\omega}{T(\omega/\mu,\alpha)}.
\end{aligned}
$$

Since
$$
E_{\alpha}\!\left(
\mu k_1,
\frac{u_{\mu}(y_{\mu}(t)-y_{\mu}(\omega))}{\mu}
\right)
=
\exp\!\left(k_1(y_\mu(t)-y_\mu(\omega))\right)
=
E_{\alpha}(\mu k_1,t/\mu)E_{\alpha}(-\mu k_1,\omega/\mu),
$$
we can factor out the term depending only on \(t\):
$$
N_1(t,\mu)
=
E_{\alpha}(\mu k_1,t/\mu)
\left[
k_2 I_1(t,\mu)-k_3 I_2(t,\mu)
\right],
$$
where
$$
I_1(t,\mu)
=
\int_0^t
\frac{E_{\alpha}(-\mu k_1,\omega/\mu)}
{E_{\alpha}(\mu k_1,\omega/\mu)h_1+h_3}
\frac{d\omega}{T(\omega/\mu,\alpha)}
$$
and
$$
I_2(t,\mu)
=
\int_0^t
E_{\alpha}(-\mu k_1,\omega/\mu)
\left(E_{\alpha}(\mu k_1,\omega/\mu)h_1+h_2\right)^2
\frac{d\omega}{T(\omega/\mu,\alpha)}.
$$

Let
$$
E=E(\omega):=E_{\alpha}(\mu k_1,\omega/\mu).
$$
Then
$$
E_{\alpha}(-\mu k_1,\omega/\mu)=\frac{1}{E},
\qquad
\frac{dE}{d\omega}
=
\frac{k_1E}{T(\omega/\mu,\alpha)},
\qquad
\frac{d\omega}{T(\omega/\mu,\alpha)}=\frac{dE}{k_1E}.
$$
Also,
$$
E(0)=1,
\qquad
E(t)=E_{\alpha}(\mu k_1,t/\mu).
$$

For \(I_1\), we get
$$
I_1(t,\mu)
=
\frac{1}{k_1}
\int_1^{E(t)}
\frac{dE}{E^2(h_1E+h_3)}.
$$
Using
$$
\frac{1}{E^2(h_1E+h_3)}
=
-\frac{h_1}{h_3^2}\frac{1}{E}
+
\frac{1}{h_3}\frac{1}{E^2}
+
\frac{h_1^2}{h_3^2}\frac{1}{h_1E+h_3},
$$
therefore
$$
I_1(t,\mu)
=
\frac{1}{k_1h_3}\left(1-\frac{1}{E(t)}\right)
+
\frac{h_1}{k_1h_3^2}
\ln\!\left(
\frac{h_1E(t)+h_3}{(h_1+h_3)E(t)}
\right).
$$

Similarly,
$$
I_2(t,\mu)
=
\frac{1}{k_1}
\int_1^{E(t)}
\frac{(h_1E+h_2)^2}{E^2}\,dE.
$$
Since
$$
\frac{(h_1E+h_2)^2}{E^2}
=
h_1^2+\frac{2h_1h_2}{E}+\frac{h_2^2}{E^2},
$$
we get
$$
I_2(t,\mu)
=
\frac{h_1^2}{k_1}\bigl(E(t)-1\bigr)
+
\frac{2h_1h_2}{k_1}\ln E(t)
+
\frac{h_2^2}{k_1}\left(1-\frac{1}{E(t)}\right).
$$
Using \(\ln E(t)=\mu k_1x(t/\mu)\), this becomes
$$
I_2(t,\mu)
=
\frac{h_1^2}{k_1}\bigl(E(t)-1\bigr)
+
2\mu h_1h_2\,x(t/\mu)
+
\frac{h_2^2}{k_1}\left(1-\frac{1}{E(t)}\right).
$$

Therefore,
$$
N_1(t,\mu)
=
E_{\alpha}(\mu k_1,t/\mu)
\left[
k_2I_1(t,\mu)-k_3I_2(t,\mu)
\right],
$$
where
$$
I_1(t,\mu)
=
\frac{1}{k_1h_3}\left(1-E_{\alpha}(-\mu k_1,t/\mu)\right)
+
\frac{h_1}{k_1h_3^2}
\ln\!\left(
\frac{h_1E_{\alpha}(\mu k_1,t/\mu)+h_3}
{(h_1+h_3)E_{\alpha}(\mu k_1,t/\mu)}
\right),
$$
and
$$
I_2(t,\mu)
=
\frac{h_1^2}{k_1}\left(E_{\alpha}(\mu k_1,t/\mu)-1\right)
+
2\mu h_1h_2\,x(t/\mu)
+
\frac{h_2^2}{k_1}
\left(1-E_{\alpha}(-\mu k_1,t/\mu)\right).
$$

\subsection{Proof of Theorem~\ref{t:inverse_mu}}
\label{app:proof_inverse_mu}
\begin{proof}
Let
\[
y_\mu(t)=\mu x\left(\frac{t}{\mu}\right),
\]
and let \(u_\mu\) denote the inverse function of \(y_\mu\). By the
change of variables
\[
r=y_\mu(t),
\]
we have
\[
\frac{dr}{dt}
=
\frac{1}{T(t/\mu,\alpha)}.
\]
Therefore, from the definition of the \(\mu\)-scaled generalized
transform,
\[
CC_{T,\mu}^{\alpha}[f](s,\mu)
=
\mu\int_0^\infty
e^{-s y_\mu(t)}f(t)
\frac{dt}{T(t/\mu,\alpha)},
\]
we obtain
\[
CC_{T,\mu}^{\alpha}[f](s,\mu)
=
\mu\int_0^\infty e^{-sr}f(u_\mu(r))\,dr.
\]
Hence,
\[
CC_{T,\mu}^{\alpha}[f](s,\mu)
=
\mu L[f(u_\mu(r))](s).
\]
Equivalently,
\[
L[f(u_\mu(r))](s)
=
\frac{1}{\mu}CC_{T,\mu}^{\alpha}[f](s,\mu).
\]
Applying the inverse Laplace formula gives
\[
f(u_\mu(r))
=
\frac{1}{2\pi i}
\lim_{R\to\infty}
\int_{a-iR}^{a+iR}
e^{sr}
\frac{1}{\mu}
CC_{T,\mu}^{\alpha}[f](s,\mu)\,ds.
\]
Finally, taking \(r=y_\mu(t)\), we get
\[
\begin{aligned}
f(t)
&=
f(u_\mu(y_\mu(t)))\\
&=
\frac{1}{2\pi i\,\mu}
\lim_{R\to\infty}
\int_{a-iR}^{a+iR}
e^{s y_\mu(t)}
CC_{T,\mu}^{\alpha}[f](s,\mu)\,ds.
\end{aligned}
\]
This proves the result.
\end{proof}

\subsection{Proof of Theorem~\ref{t:residue_generalized_transform}}
\label{app:proof_residue_generalized_transform}
\begin{proof}
Let
\[
r=x(t),
\qquad
x(t)=\int_0^t\frac{d\omega}{T(\omega,\alpha)},
\]
and let \(u\) be the inverse function of \(x\). Define
\[
g(r)=f(u(r)).
\]
By Corollary~\ref{invL}, we have
\[
\mathcal{L}_{T}^{\alpha}[f](s)
=
L[g](s).
\]
Since \(F(s)=\mathcal{L}_{T}^{\alpha}[f](s)\), it follows that
\[
F(s)=L[g](s).
\]
Then,
\[
g(r)
=
\frac{1}{2\pi i}
\lim_{R\to\infty}
\int_{a-iR}^{a+iR}
e^{sr}F(s)\,ds
=
\sum_{k=1}^{n}
\operatorname{Res}
\left(
e^{sr}F(s),s_k
\right).
\]
Taking \(r=x(t)\), hence
\[
g(x(t))=f(u(x(t)))=f(t).
\]
Therefore,
\[
f(t)
=
\frac{1}{2\pi i}
\lim_{R\to\infty}
\int_{a-iR}^{a+iR}
e^{s x(t)}F(s)\,ds
=
\sum_{k=1}^{n}
\operatorname{Res}
\left(
e^{s x(t)}F(s),s_k
\right).
\]
Finally, since
\[
e^{s x(t)}
=
\exp\left(
s\int_0^t\frac{d\omega}{T(\omega,\alpha)}
\right)
=
E_{\alpha}(s,t),
\]
we conclude that
\[
f(t)
=
\frac{1}{2\pi i}
\lim_{R\to\infty}
\int_{a-iR}^{a+iR}
E_{\alpha}(s,t)F(s)\,ds
=
\sum_{k=1}^{n}
\operatorname{Res}
\left(
E_{\alpha}(s,t)F(s),s_k
\right).
\]
The proof is complete.
\end{proof}

\subsection{Proof of Theorem~\ref{t:residue_mu_transform}}
\label{app:proof_residue_mu_transform}
\begin{proof}
Let
\[
r=y_\mu(t),
\qquad
y_\mu(t)=\mu x\left(\frac{t}{\mu}\right),
\]
and let \(u_\mu=y_\mu^{-1}\) be the inverse function of \(y_\mu\). Define
\[
g_\mu(r)=f(u_\mu(r)).
\]
By Corollary~\ref{invrelat}, we get
\[
CC_{T,\mu}^{\alpha}[f](s,\mu)
=
\mu L[g_\mu](s).
\]
Since \(F(s,\mu)=CC_{T,\mu}^{\alpha}[f](s,\mu)\), it follows that
\[
L[g_\mu](s)
=
\frac{1}{\mu}F(s,\mu).
\]
Then,
\[
g_\mu(r)
=
\frac{1}{2\pi i}
\lim_{R\to\infty}
\int_{a-iR}^{a+iR}
e^{sr}\frac{1}{\mu}F(s,\mu)\,ds.
\]
Hence,
\[
g_\mu(r)
=
\frac{1}{2\pi i\,\mu}
\lim_{R\to\infty}
\int_{a-iR}^{a+iR}
e^{sr}F(s,\mu)\,ds
=
\frac{1}{\mu}
\sum_{k=1}^{n}
\operatorname{Res}
\left(
e^{sr}F(s,\mu),s_k
\right).
\]
Taking \(r=y_\mu(t)\), we obtain
\[
g_\mu(y_\mu(t))=f(u_\mu(y_\mu(t)))=f(t).
\]
Therefore,
\[
f(t)
=
\frac{1}{2\pi i\,\mu}
\lim_{R\to\infty}
\int_{a-iR}^{a+iR}
e^{s y_\mu(t)}F(s,\mu)\,ds
=
\frac{1}{\mu}
\sum_{k=1}^{n}
\operatorname{Res}
\left(
e^{s y_\mu(t)}F(s,\mu),s_k
\right).
\]
Hence the identity holds.
\end{proof}

\subsection{Proof of Theorem~\ref{t:mu_exponential_order}}
\label{app:proof_mu_exponential_order}

\begin{proof}
Let
\[
y_\mu(t)=\mu x\left(\frac{t}{\mu}\right).
\]
Then
\[
dy_\mu(t)=\frac{dt}{T(t/\mu,\alpha)}
\]
and
\[
E_{\alpha}(\mu c,t/\mu)
=
\exp\left(
\mu c\int_0^{t/\mu}\frac{d\omega}{T(\omega,\alpha)}
\right)
=
e^{c y_\mu(t)}.
\]

By hypothesis, there exist constants \(M>0\), \(c\in\RR\), and \(t_0\geq0\) such that
\[
|f(t)|
\leq
M E_{\alpha}(\mu c,t/\mu)
=
M e^{c y_\mu(t)},
\qquad t\geq t_0.
\]

We split the transform integral as
\[
\begin{aligned}
CC^\alpha_{T,\mu}[f](s,\mu)
&=
\mu\int_0^{t_0}
e^{-s y_\mu(t)}f(t)\frac{dt}{T(t/\mu,\alpha)}
\\
&\quad+
\mu\int_{t_0}^{\infty}
e^{-s y_\mu(t)}f(t)\frac{dt}{T(t/\mu,\alpha)}.
\end{aligned}
\]

The first integral is finite because \(f\) is locally integrable and the measure
\[
\frac{dt}{T(t/\mu,\alpha)}
\]
has finite mass on bounded intervals under the standing assumptions on \(T\). Indeed, after the change of variable
\[
\omega=\frac{t}{\mu},
\]
we have
\[
\int_0^{t_0}\frac{dt}{T(t/\mu,\alpha)}
=
\mu\int_0^{t_0/\mu}\frac{d\omega}{T(\omega,\alpha)}
<\infty.
\]
Moreover, \(e^{-s y_\mu(t)}\) is bounded on \([0,t_0]\), since \(y_\mu\) is finite there. Hence the local part of the integral is well defined.

Now let \(\Re(s)>c\). For \(t\geq t_0\), we have
\[
\left|e^{-s y_\mu(t)}f(t)\right|
\leq
e^{-\Re(s)y_\mu(t)}|f(t)|
\leq
M e^{-(\Re(s)-c)y_\mu(t)}.
\]
Since \(\Re(s)-c>0\) and \(y_\mu(t)\to\infty\) as \(t\to\infty\), it follows that
\[
\lim_{t\to\infty}
e^{-s y_\mu(t)}f(t)=0.
\]

Furthermore,
\[
\begin{aligned}
\mu\int_{t_0}^{\infty}
\left|e^{-s y_\mu(t)}f(t)\right|
\frac{dt}{T(t/\mu,\alpha)}
&\leq
\mu M\int_{t_0}^{\infty}
e^{-(\Re(s)-c)y_\mu(t)}
\frac{dt}{T(t/\mu,\alpha)}
\\
&=
\mu M\int_{y_\mu(t_0)}^{\infty}
e^{-(\Re(s)-c)r}\,dr
\\
&=
\frac{\mu M}{\Re(s)-c}
e^{-(\Re(s)-c)y_\mu(t_0)}
<\infty.
\end{aligned}
\]
Therefore, \(CC_{T,\mu}^{\alpha}[f](s,\mu)\) exists for every
\(s\in\mathbb{C}\) such that \(\Re(s)>c\).

If, in addition, the estimate
\[
|f(t)|\leq M E_{\alpha}(\mu c,t/\mu)
\]
holds for every \(t\geq0\), then
\[
\begin{aligned}
\left|
CC_{T,\mu}^{\alpha}[f](s,\mu)
\right|
&\leq
\mu\int_0^\infty
\left|e^{-s y_\mu(t)}f(t)\right|
\frac{dt}{T(t/\mu,\alpha)}
\\
&\leq
\mu M\int_0^\infty
e^{-(\Re(s)-c)y_\mu(t)}
\frac{dt}{T(t/\mu,\alpha)}
\\
&=
\mu M\int_0^\infty
e^{-(\Re(s)-c)r}\,dr
\\
&=
\frac{\mu M}{\Re(s)-c}.
\end{aligned}
\]
This proves the desired bound.
\end{proof}


\end{document}